\documentclass[a4paper,10pt]{article}

\usepackage{amsfonts,amssymb,amsmath}

\usepackage{dsfont}
\usepackage{cancel}
\usepackage[normalem]{ulem}
\usepackage{tikz}
\usepackage{tikz-3dplot,calc}
\usepackage{tikz-cd}
\usepackage{cancel}
\usepackage{lscape}
\usepackage{comment}
\numberwithin{equation}{section}
\input{xy}
\usepackage[all]{xy}
\xyoption{all}
\xyoption{poly}

\newtheorem{theo}{Theorem}[section]
\newtheorem{coro}[theo]{Corollary}
\newtheorem{lemm}[theo]{Lemma}
\newtheorem{prop}[theo]{Proposition}
\newtheorem{defi}[theo]{Definition}
\newtheorem{rema}[theo]{Remark}
\newtheorem{exam}[theo]{Example}

\newenvironment{proof}{\noindent \textbf{{Proof.}} \sf}

\def\qed{\hfill $\diamond$ \bigskip}


\def\C{{\mathcal C}}

\def\H{{\mathcal H}}

\def\M{{\mathcal M}}

\def\s{{\mathcal S}}

\def\lim{\mathop{\rm lim}\nolimits}

\def\Hom{\mathop{\sf Hom}\nolimits}
\def\rad{\mathop{\rm rad}\nolimits}

\def\End{\mathop{\sf End}\nolimits}

\def\Tor{\mathop{\rm Tor}\nolimits}

\def\Ker{\mathop{\rm Ker}\nolimits}

\def\Coker{\mathop{\rm Coker}\nolimits}

\begin{document}
\sf

\title{Han's conjecture and Hochschild homology for null-square projective algebras}
\author{Claude Cibils, Mar\'{\i}a Julia Redondo and Andrea Solotar
\thanks{\footnotesize This work has been supported by the projects  UBACYT 20020130100533BA, PIP-CONICET 11220150100483CO, and MATHAMSUD-REPHOMOL.
The second and third authors are research members of CONICET (Argentina).}}

\date{}
\maketitle
\begin{abstract}
Let  $\H$ be the class of algebras verifying Han's conjecture. In this paper we analyse two types of algebras with the aim of providing an inductive step towards the proof of this conjecture. Firstly we show that if an algebra $\Lambda$ is triangular with respect to a system of non necessarily primitive idempotents, and if the algebras at the idempotents belong to $\H$, then $\Lambda$ is in $\H$. Secondly we consider a $2\times 2$ matrix algebra, with two algebras on the diagonal, two projective bimodules in the corners, and zero corner products. They are not triangular with respect to the system of the two diagonal idempotents. However, the analogous result holds, namely if both algebras on the diagonal belong to $\H$, then the algebra itself is in $\H$.
\end{abstract}

\noindent 2010 MSC: 16E10, 16E40, 16E65, 18G20

\noindent \textbf{Keywords:} Han's conjecture, global dimension, smooth, Hochschild homology

\normalsize

\section{\sf Introduction}

In this paper, a \emph{smooth} finite dimensional algebra over a field $k$ is a finite dimensional algebra of finite global dimension. The word "smooth" is originated in commutative algebra and is convenient for brevity. Observe that in \cite{CUNTZQUILLEN}, for finite dimensional algebras, "smooth" corresponds to algebras of global dimension at most one, that is, hereditary or semisimple algebras.

In 2006, Y. Han conjectured in \cite{HAN} that a finite dimensional algebra whose Hochschild homology vanishes in large enough degrees is smooth. In the same paper Y. Han proved the conjecture for monomial algebras, while in \cite{BERGHMADSEN2009} P.A. Bergh and D. Madsen proved it in characteristic zero for graded finite dimensional local algebras, Kozsul
algebras and graded cellular algebras.  Recently, the same authors showed in \cite{BERGHMADSEN2017} that trivial extensions of selfinjective algebras, local algebras and graded algebras have infinite Hochschild homology, a result which confirms Han's conjecture for these algebras. Observe that in the 90's, the work of the Buenos Aires Cyclic Homology Group \cite{BACH}, and of L. Avramov and M. Vigu\'{e}-Poirrier \cite{AVRAMOVVIGUE} provided the result for finitely generated commutative algebras.

 In relation with Han's conjecture, lower bounds are obtained in \cite{BERGHMADSEN2010} for the dimension of the Hochschild homology groups of fiber products of algebras, trivial extensions, path algebras of quivers containing loops and quantum complete intersections. Note that P.A. Bergh and K. Erdmann  proved in \cite{BERGHERDMANN}  that quantum complete intersections - not at a root of unity - satisfy Han's conjecture. In \cite{SOLOTARVIGUE} A. Solotar and M. Vigu\'{e}-Poirrier proved Han's conjecture for a generalization of quantum complete intersections and for a family of algebras which are in a sense opposite to these. Moreover in \cite{SOLOTARSUAREZVIVAS}, A. Solotar, M. Su\'{a}rez-Alvarez and Q. Vivas considered quantum generalized Weyl algebras  and proved  Han's conjecture for these algebras (out of a few exceptional cases).

In this paper we consider null-square algebras over a field $k$, that is algebras $\Lambda$ of the form $$\left(
  \begin{array}{cc}
    A & N \\
    M & B \\
  \end{array}
\right)$$
where $A$ and $B$ are $k$-algebras, $M$ and $N$ are bimodules, and the product is given by matrix multiplication subject to $MN=0=NM$. For these algebras, $I=M\oplus N$ is a two-sided ideal verifying $I^2=0$ and $C=A\times B$ is a subalgebra. Actually $\Lambda= C\oplus I$,  that is, $\Lambda$ is a cleft singular extension (see  \cite[p. 284]{MACLANE}).

Hochschild homology is a functor $HH_*$ from $k$-algebras to graded vector spaces. Hence for a null-square algebra, $HH_*(C)$ is a direct summand of $HH_*(\Lambda)$. Moreover, note that  $HH_*(C)=HH_*(A)\oplus HH_*(B)$.

In relation to Han's conjecture, this paper treats two opposite cases, one corresponds to quivers without cycles, while in the other case the quiver contains cycles. Both of them aim to provide an inductive step towards proving the conjecture.  In Section \ref{cornerandtriangular} we consider algebras which are $E$-triangular, that is, they do not have oriented cycles with respect to a complete system $E$ of non necessarily primitive orthogonal idempotents - for brevity we call such a set $E$ a "system". In Sections \ref{HHnullsquareprojective} and \ref{cuatro}, on the contrary, we study a case where there is an oriented cycle. In this last case our analysis requires the involved bimodules to be projective.

A null-square algebra with $N=0$ will be called a corner algebra. For these algebras $HH_*(\Lambda)=HH_*(C)$ by a direct computation  that we briefly recall in Section \ref{cornerandtriangular}, see also \cite{LODAY1998} or \cite{CIBILS2000}.  Moreover we show that if a corner algebra is finite dimensional, with $A$ and $B$ smooth, then the corner algebra is also smooth. This leads to our first result, namely corner algebras built on the class of algebras $\H$ verifying Han's conjecture, also belong to $\H$. Note that no extra assumption on $M$ is required in the foregoing.

Based on the previous results,  we go further.  To a system $E$ of  a $k$-algebra $\Lambda$, we associate its Peirce $E$-quiver: the set of vertices is $E$, and for $x\neq y$ elements of $E$, there is an arrow from $x$ to $y$ if $y\Lambda x\neq 0$. If the Peirce $E$-quiver has no oriented cycles then $\Lambda$ is called $E$-triangular. For instance the Peirce $E$-quiver of a corner algebra with respect to the system $E$ given by the two diagonal idempotents is an arrow if $M\neq 0$. We show that if $\Lambda$ is $E$-triangular, then there is a decomposition $HH_*(\Lambda)=\bigoplus_{x\in E} HH_*(x\Lambda x)$. Moreover for a finite dimensional $E$-triangular algebra $\Lambda$  such that $x\Lambda x$ is smooth for all $x\in E$, the algebra is also smooth. We infer that finite dimensional $E$-triangular algebras built on the class $\H$ also belong to $\H$, without requiring additional assumptions on the bimodules $y\Lambda x$.

In Section \ref{HHnullsquareprojective}, we consider null-square algebras $\Lambda$ with non zero  bimodules $M$ and $N$, in other words the Peirce $E$-quiver with respect to the two diagonal idempotents is $\cdot\rightleftarrows\cdot$. If $M$ and $N$ are projective bimodules, $\Lambda$  is called a null-square projective algebra. We provide a long exact sequence computing $HH_*(\Lambda)$, which is associated to the short exact sequence obtained from the product map $\Lambda\otimes_C\Lambda \to \Lambda$. We obtain a projective resolution of the kernel $K^1_C(\Lambda)$ of this map, which enables us to compute $\Tor^{\Lambda\!-\! \Lambda}_{*}(K_C^1(\Lambda),\ \Lambda)$ through invariants or coinvariants of a natural action of cyclic groups $C_m$ on the zero degree Hochschild homology of tensor powers  $N\otimes_BM$, that is $H_0\left( A, \left(N\otimes_BM\right)^{\otimes_{_A}m}\right)^{C_{m}}$ and $H_0\left( A, \left(N\otimes_BM\right)^{\otimes_{_A}m}\right)_{C_{m}}$. We thus obtain the long exact sequence of Theorem \ref{longexactsequence}.

In Section \ref{Han nullsquareprojective} we focus on basic finite dimensional algebras $A$ and $B$ over a perfect field. After choosing a complete system of primitive orthogonal idempotents for each algebra,  the projective bimodules $M$ and $N$ are given explicitly as direct sums of indecomposable projective bimodules. We first prove that if the invariants are zero,  that is $H_0\left( A, \left(N\otimes_BM\right)^{\otimes_{_A}m}\right)^{C_{m}}=0$, then the space itself is zero. We infer that if the null-square projective algebra has zero homology in large degrees,  then the $0$-homology of any tensor power of $N\otimes_AM$ vanishes. Hence the long exact sequence obtained before provides $HH_*(\Lambda)=HH_*(A\times B)$. We prove that the tensor powers of $N\otimes_AM$ and of $M\otimes_BN$ vanish in large enough degrees.  Observe that $H_0\left(A, \left(N\otimes_BM\right)^{\otimes_A*}\right)$ is related to 2-truncated cycles, namely cycles in the Gabriel quiver of a basic algebra in which the product of any two consecutive arrows is zero, as considered in \cite{BERGHHANMADSEN2012} in order to guarantee that Hochschild homology is infinite dimensional.

Another important result that we obtain in this section is the following Theorem \ref{smooth}. For a perfect field $k$, let $\Lambda$ be a finite dimensional null-square projective $k$-algebra, where $A$ and $B$ are smooth. Assuming the bimodules verify $\left(N\otimes_BM\right)^{\otimes_A*}=0$ for large enough exponents, the algebra $\Lambda$ is also smooth. The proof relies on the construction of an explicit projective resolution obtained through successive cones of the identity.

One of the main results of this paper follows: a finite dimensional null-square projective algebra built on the class of basic algebras in $\H$ also belongs to $\H$.

In the last section we give a presentation by quiver and relations of a null-square projective algebra, starting from the same type of presentations of $A$ and $B$. This is useful for producing examples where our results apply.

\section{\sf Han's conjecture for corner and $E$-triangular algebras}\label{cornerandtriangular}

In this section we first consider null-square algebras and their category of representations. Next, we will study corner algebras which are particular cases of null-square algebras, in relation with Han's conjecture. The results that we obtain in this section for corner (and then for triangular  algebras) do not require a projectivity hypothesis on the bimodules considered in the definition of a null-square algebra below.

\begin{defi}
Let $k$ be a field and let $A$ and $B$ be $k$-algebras. Let $M$ and $N$ be respectively a $B-A$-bimodule and an $A-B$-bimodule. The corresponding \emph{null-square algebra} is
$$\left(
  \begin{array}{cc}
    A & N \\
    M & B \\
  \end{array}
\right)$$
where the product is given by matrix multiplication using the products of $A$ and $B$, the bimodule structures of $M$ and $N$, and setting  $mn=0$ and $nm=0$ for all $m\in M$ and $n\in N$.
\end{defi}

\begin{rema}
A \emph{square algebra} is an algebra $$\Lambda = \left(
  \begin{array}{cc}
    A & N \\
    M & B \\
  \end{array}
\right)$$ as before, with two bimodule maps $\alpha:N\otimes_BM\to A$ and $\beta:M\otimes_AN\to B$ verifying the obvious "associativity" conditions that ensure the associativity of the corresponding matrix product on $\Lambda$. A null-square algebra is  a square algebra where $\alpha=0=\beta$. Observe that in \cite{BUCHW}, R.-O. Buchweitz studies  square algebras which are called "(generalised) Morita context" or  "pre-equivalence", and focus on the case where $\alpha$ or $\beta$ are surjective.
\end{rema}

\begin{exam}
Let $\Lambda$ be a $k$-algebra with a decomposition $\Lambda=P\oplus Q$ as  a right $\Lambda$-module. Then $\Lambda$ is a square algebra of the form
$$
\left(
  \begin{array}{cc}
    \End_{\Lambda}P & \Hom_{\Lambda}(Q,P) \\
\Hom_\Lambda(P,Q) & \End_{\Lambda}Q \\
  \end{array}
\right)$$
If for all $f\in\Hom_{\Lambda}(P,Q)$ and for all $g\in\Hom_{\Lambda}(Q,P)$ the compositions $gf$ and $fg$ are zero, the algebra is null-square. \end{exam}

\begin{rema}
Any square algebra $\Lambda$ is obtained as above by considering the right module decomposition:
$$\left(
  \begin{array}{cc}
    A & N \\
    M & B \\
  \end{array}
\right)=\left(
  \begin{array}{cc}
    A & N \\
    0 & 0\\
  \end{array}
\right) \oplus
\left(
  \begin{array}{cc}
    0 & 0 \\
    M & B \\
  \end{array}
\right).$$
\end{rema}

\vskip3mm
Recall that a \emph{cleft singular extension algebra} (see \cite[p. 284]{MACLANE}) is an algebra $\Lambda$ with a decomposition $\Lambda = C \oplus I,$ where $C$ is a subalgebra and $I$ is a two-sided ideal of $\Lambda$ verifying  $I^2=0.$ A null-square algebra $ \Lambda = \left(
  \begin{array}{cc}
    A & N \\
    M & B \\
  \end{array}
\right)$ is an instance of a cleft singular extension with $C=A\times B$ and $I=M\oplus N$. Indeed, $I$ is a two-sided ideal precisely because $MN=NM=0$.

We will next consider systems of idempotents of an arbitrary algebra in order to recall the representation theory of a null-square algebra.

\begin{defi}
Let $\Lambda$ be a $k$-algebra. A \emph{system} of $\Lambda$ is a finite set $E$ of non zero orthogonal idempotents which is complete, \emph{i.e. } $\sum_{x\in E}x=1$.  The system is trivial if $E=\{1\}.$
\end{defi}

Observe  that in the above definition we do not require the idempotents to be primitive.
To  a system $E$ of a $k$-algebra $\Lambda$ we associate a
 $k$-category  $\C_{\Lambda, E}$
 as follows: its objects are the elements of $E$ while the vector space ${}_y\!\left({\C_{\Lambda, E}}\right)_x$ of morphisms from $x$ to $y$ is $y\Lambda x$. The composition is provided by
 the product of $\Lambda$. Of course $\Lambda$ is recovered as the direct sum of all the morphisms spaces of $\C_{\Lambda, E}$, endowed with the matrix product.
It is well known and easy to prove that the  $k$-categories of left $\Lambda$-modules, and of $k$-functors from $\C_{\Lambda, E}$ to $k$-vector spaces, are isomorphic.

Let now $\C$ be a small $k$-category, with set of objects  $\C_0$. Notice that a $k$-functor $\M$ from $\C$ to $k$-vector spaces is given by a family of vector spaces $\{{}_x\M\}_{x\in\C_0}$ and a collection of linear maps
$${}_y\C_x\otimes {}_x\M \stackrel{{}_ym_x} {\xrightarrow{\hspace{15mm} }} {}_y\M$$
such  that, for any objects $x$, $y$ and $z$, the following diagram commutes:
\[
\xymatrix@!C{
{}_z\C_y\otimes {}_y\C_x \otimes {}_x\M         \ar@{->}[d]_{c\otimes 1}          \ar[r]^-{1\otimes {}_ym_x}       &        {}_z\C_y\otimes {}_y\M         \ar@{->}[d]^{{}_zm_y} \\
{}_z\C_x\otimes {}_x\M              \ar[r]_-{{}_zm_x}                &    {}_z\M}
\]
 Next we define a $k$-category, which will be isomorphic to the category of left modules over a square algebra.
\begin{defi}\label{categoryS}

Let $\Lambda = \left(
  \begin{array}{cc}
    A & N \\
    M & B \\
  \end{array}
\right)$ be a square algebra. The objects of the linear category  $\s(\Lambda)$ are $X \underset{\nu }   {\overset{\mu}{\rightleftharpoons}} Y$, where $X$ is an $A$-module, $Y$ is a $B$-module, $X\overset{\mu}{\rightharpoonup}Y$  stands   for a map of $B$-modules $\mu:M\otimes_A X\rightarrow Y$  and analogously
 $X\underset{\nu}{\leftharpoondown}Y$  stands  for a map of $A$-modules $\nu:N\otimes_B Y\rightarrow X$
 which verify
\begin{equation}\label{associativity}
\nu(1_N\otimes \mu)=\alpha\otimes 1_X \mbox{ and } \mu(1_M\otimes \nu)=\beta\otimes 1_Y.
\end{equation}
Note that we identify the vector spaces $A\otimes_AX$ and $X$  through the canonical isomorphism, as well as $Y\otimes_BB$ and $Y$.

A morphism in $\s(\Lambda)$ from $X \underset{\nu }   {\overset{\mu}{\rightleftharpoons}} Y$ to $X'\underset{\nu'}   {\overset{\mu'}{\rightleftharpoons}} Y'$ is a couple $(\varphi,\psi)$ where $\varphi :X\to X'$ is a morphism of $A$-modules,  $\psi :Y\to Y'$ is a morphism of $B$-modules such that  the following diagrams commute:
$$
\xymatrix@!C{
M\otimes_A X       \ar@{->}[d]_{1\otimes\varphi}          \ar[r]^-{\mu}       &       Y        \ar@{->}[d]^{\psi} \\
M\otimes_A X'              \ar[r]_-{\mu'}                &    Y'}
\hskip2cm
\xymatrix@!C{
X   \ar@{->}[d]_{\varphi} &   N\otimes_B  Y  \ar@{->}[l]_-{\nu}     \ar@{->}[d]^{1\otimes \psi}\\
X'   &   N\otimes_B  Y'  \ar@{->}[l]^-{\nu'}  }
$$
\end{defi}
\begin{prop}\label{modulesandcategoryS}
Let $\Lambda$ be a square algebra. The category of left $\Lambda$-modules is isomorphic to $\s (\Lambda)$.
\end{prop}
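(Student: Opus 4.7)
The plan is to use the Peirce decomposition coming from the two diagonal idempotents $e_A=\left(\begin{smallmatrix} 1_A & 0 \\ 0 & 0\end{smallmatrix}\right)$ and $e_B=\left(\begin{smallmatrix} 0 & 0 \\ 0 & 1_B\end{smallmatrix}\right)$, which are orthogonal and sum to $1_\Lambda$, to build explicit functors in both directions and check they are mutually inverse.

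Given a left $\Lambda$-module $W$, set $X=e_AW$ and $Y=e_BW$, so that $W=X\oplus Y$. Since $e_A\Lambda e_A=A$ and $e_B\Lambda e_B=B$, the space $X$ inherits a left $A$-module structure and $Y$ a left $B$-module structure. Left multiplication by $M=e_B\Lambda e_A$ sends $X$ into $Y$ and is $A$-balanced, hence factors through a $B$-linear map $\mu:M\otimes_A X\to Y$; symmetrically, left multiplication by $N$ induces $\nu:N\otimes_B Y\to X$. The axioms \eqref{associativity} are then nothing more than the associativity of the $\Lambda$-action applied to $m\in M$, $n\in N$ and $y\in Y$: the identity $m(ny)=(mn)y$ reads $\mu(1_M\otimes\nu)(m\otimes n\otimes y)=\beta(m\otimes n)y$, and the symmetric computation yields the identity involving $\alpha$. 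A $\Lambda$-linear map $f:W\to W'$ automatically decomposes as $\varphi\oplus\psi$ with $\varphi=f|_X$ and $\psi=f|_Y$, and its compatibility with left multiplication by $M$ and $N$ translates verbatim into the two commuting squares of Definition \ref{categoryS}. This defines a functor $F$ from the category of left $\Lambda$-modules to $\s(\Lambda)$.

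For the converse, from a datum $X\underset{\nu}{\overset{\mu}{\rightleftharpoons}}Y$ I would set $W=X\oplus Y$ and define the $\Lambda$-action by
$$\left(\begin{array}{cc} a & n \\ m & b \end{array}\right)\left(\begin{array}{c} x \\ y \end{array}\right)=\left(\begin{array}{c} ax+\nu(n\otimes y) \\ \mu(m\otimes x)+by \end{array}\right),$$
and verify that associativity of this action is equivalent to the conjunction of the bimodule axioms for $M$ and $N$ with the two identities \eqref{associativity}. A morphism $(\varphi,\psi)$ is sent to $\varphi\oplus\psi$, whose $\Lambda$-linearity is again encoded by the commuting squares. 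This yields a functor $G:\s(\Lambda)\to\Lambda\text{-modules}$.

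Finally, $FG$ and $GF$ are the identity by construction: the Peirce decomposition of $G(X,Y,\mu,\nu)$ returns the original structure maps, while every $\Lambda$-module is reconstructed from its Peirce components $e_AW$ and $e_BW$ with the original block actions. The only point to keep track of is the dictionary translating \eqref{associativity} for $(\mu,\nu)$ into the block-wise associativity of the matrix product in $\Lambda$; this is routine but is the one place where the datum $\alpha,\beta$ of the square algebra genuinely enters, and it is what singles out the null-square case (where $\nu\circ(1_N\otimes\mu)=0=\mu\circ(1_M\otimes\nu)$) as the specialisation $\alpha=\beta=0$ of the general statement.
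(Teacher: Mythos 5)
Your proof is correct and takes essentially the same route as the paper: both rest on the Peirce decomposition with respect to the two diagonal idempotents $e$ and $1-e$. The paper compresses the argument into a reference to the general (and previously recalled) equivalence between left $\Lambda$-modules and $k$-functors on the Peirce category $\C_{\Lambda,E}$, whereas you unpack that equivalence explicitly by writing out the two functors and checking the block-wise associativity dictionary; the content is the same.
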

\begin{proof}
Consider the complete set of orthogonal idempotents $E=\{e, 1-e\}$ of $\Lambda$, where $e=\left(
                                                                                           \begin{array}{cc}
                                                                                             1 & 0 \\
                                                                                             0 & 0 \\
                                                                                           \end{array}
                                                                                         \right).$
The result is an immediate consequence of the previous observations.\qed
\end{proof}

In what follows the categories of  the above  proposition will be identified. Note that for a null-square algebra, the equalities (\ref{associativity}) become
\begin{equation}
\nu(1_N\otimes \mu)=0\mbox{ and } \mu(1_M\otimes \nu)=0.
\end{equation}

\begin{lemm}\label{projectiveone}
Let $\Lambda$ be a square algebra  and let $P$ be a projective $A$-module. The $\Lambda$-module  $\left(P \underset{\alpha}   {\overset{1}{\rightleftharpoons}} M\otimes_A P\right)$ is projective.
\end{lemm}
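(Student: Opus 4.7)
The plan is to recognise the assignment
$$F\colon X \;\longmapsto\; \left(X \underset{\alpha\otimes 1_X}{\overset{1}{\rightleftharpoons}} M\otimes_A X\right)$$
as a functor from $A$-modules to $\s(\Lambda)$ which is left adjoint to the ``first component'' functor $R\bigl(X \underset{\nu}{\overset{\mu}{\rightleftharpoons}} Y\bigr)=X$. Once this is established, the lemma is immediate: $F(P)$ is the $\Lambda$-module of the statement (under the identification of Proposition \ref{modulesandcategoryS}), and the left adjoint of an exact functor preserves projectives. The functor $R$ is visibly exact since exactness in $\s(\Lambda)$ is componentwise.

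Two verifications are needed. First, that $F(X)$ is a well-defined object of $\s(\Lambda)$: with $\mu=1_{M\otimes_A X}$ and $\nu=\alpha\otimes 1_X$, the first relation of \eqref{associativity} is tautological, while the second reduces, after tensoring on the right with $1_X$, to the Morita-context identity $m\,\alpha(n\otimes m') = \beta(m\otimes n)\,m'$ in $M$; this identity is one of the associativity axioms built into the definition of a square algebra. Second, the adjunction: given $L=\bigl(X' \underset{\nu'}{\overset{\mu'}{\rightleftharpoons}} Y'\bigr)$, a morphism $F(X)\to L$ is a pair $(\varphi,\psi)$ subject to the two commutative squares of Definition \ref{categoryS}. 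Because $\mu=1$, the first square forces $\psi = \mu'\circ(1_M\otimes\varphi)$, so $\psi$ is determined by $\varphi$; the second square then holds automatically, since \eqref{associativity} applied to $L$ yields
$$\nu'(1_N\otimes\psi)=\nu'(1_N\otimes\mu')(1_N\otimes 1_M\otimes\varphi)=(\alpha\otimes 1_{X'})(1_N\otimes 1_M\otimes\varphi)=\varphi\circ(\alpha\otimes 1_X),$$
using the $A$-linearity of $\varphi$ at the last step. This provides a natural isomorphism $\Hom_\Lambda(F(X),L)\cong\Hom_A(X,R(L))$.

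To conclude, $\Hom_\Lambda(F(P),-)\cong\Hom_A(P,R(-))$ is exact because $R$ is exact and $P$ is $A$-projective, hence $F(P)$ is projective. The only mildly delicate point I foresee is the Morita-context identity used for well-definedness; the remainder is purely formal. As a shortcut one may instead identify $F(P)$ with $\Lambda e\otimes_A P$ for $e=\mathrm{diag}(1,0)$ (noting $e\Lambda e\cong A$), and observe that $F(P)$ is then a direct summand of $(\Lambda e)^{\oplus I}$ whenever $P$ is a direct summand of $A^{(I)}$, hence projective since $\Lambda e$ is a direct summand of $\Lambda$.
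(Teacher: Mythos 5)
Your argument is correct, and your concluding ``shortcut'' ($F(P)\cong \Lambda e\otimes_A P$, a summand of a direct sum of copies of $\Lambda e$) is precisely the paper's own proof. Your main argument via adjunction is a genuinely different (more categorical) route: it establishes $F\dashv R$ with $R$ the first-component functor, and then invokes exactness of $R$. This framing buys conceptual clarity and makes the proof independent of the matrix picture, but it costs you two explicit verifications: that $F(X)$ satisfies the square-algebra associativity constraints (your reduction to the Morita-context identity $m\,\alpha(n\otimes m')=\beta(m\otimes n)m'$ is correct), and that the second commuting square holds automatically once $\psi$ is forced by the first. The paper sidesteps both by identifying the object directly as $\Lambda e\otimes_A X$, so that membership in $\s(\Lambda)$ and the action formulas are automatic rather than checked by hand. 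Both are sound; the paper's version is shorter, yours isolates the adjunction $(\Lambda e\otimes_A -)\,\dashv\,(e\cdot-)$ that is implicitly doing the work in either case.
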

\begin{proof}
 Let $\Lambda_1$ be the $\Lambda\! -\! A$-bimodule given by the first column of $\Lambda$,  that is,  $\Lambda_1=\left(
                                                                                           \begin{array}{cc}
                                                                                             A & 0 \\
                                                                                             M & 0 \\
                                                                                           \end{array}
                                                                                         \right)= \left(A \underset{\alpha}   {\overset{1}{\rightleftharpoons}} M\right).$
 Note that $\Lambda_1=\Lambda e$ is a projective $\Lambda$-module.  Moreover, if  $X$ is an $A$-module, $ \Lambda_1\otimes_A X = \left(X \underset{\alpha\otimes 1_X}   {\overset{1}{\rightleftharpoons}} M\otimes_A X\right)$. Since $ \Lambda_1\otimes_A A$ is isomorphic to  $\Lambda_1$, we infer that $ \Lambda_1\otimes_A P$ is a projective $\Lambda$-module.
\qed
\end{proof}

The analogous result holds for $B$-modules.

From now on we focus on null-square algebras.
\begin{prop}\label{simples}
Let $\Lambda = \left(
  \begin{array}{cc}
    A & N \\
    M & B \\
  \end{array}
\right)$ be a null-square algebra where $A$, $B$, $M$ and $N$ are finite dimensional. A simple $\Lambda$-module is isomorphic to
$$ S \underset{}   {\overset{}{\rightleftharpoons}} 0 \mbox{ or } 0 \underset{}   {\overset{}{\rightleftharpoons}} T$$
where $S$ and $T$ are simple  $A$ and $B$-modules respectively.
\end{prop}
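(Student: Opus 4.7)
The plan is to use the identification of left $\Lambda$-modules with the category $\s(\Lambda)$ provided by Proposition \ref{modulesandcategoryS}, and to exhibit two canonical sub-objects of any module that become forced to be trivial or the whole module under simplicity. The null-square condition $\nu(1_N\otimes\mu)=0=\mu(1_M\otimes\nu)$ is precisely what makes these sub-objects stable.

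More concretely, let $\left(X \underset{\nu }{\overset{\mu}{\rightleftharpoons}} Y\right)$ be a simple $\Lambda$-module. I first claim that $\left(0 \underset{0}{\overset{0}{\rightleftharpoons}} \Im\mu\right)$ is a sub-object. The only nontrivial verification is that $\nu(N\otimes \Im\mu)\subseteq 0$, which follows from $\nu(1_N\otimes\mu)=0$. Symmetrically, $\left(\Im\nu \underset{0}{\overset{0}{\rightleftharpoons}} 0\right)$ is a sub-object using $\mu(1_M\otimes\nu)=0$.

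By simplicity, each of these sub-objects is either zero or the entire module. The first option forces either $\mu=0$, or $X=0$ (in which case $\nu=0$ as well and $Y$ is a simple $B$-module, giving $0\rightleftharpoons T$). Analogously the second option forces either $\nu=0$, or $Y=0$ (giving $S\rightleftharpoons 0$). It remains to handle the case $\mu=\nu=0$: then $\left(X \underset{0}{\overset{0}{\rightleftharpoons}} 0\right)$ and $\left(0 \underset{0}{\overset{0}{\rightleftharpoons}} Y\right)$ are both sub-objects (again automatically, because the bimodule maps are zero), so simplicity forces exactly one of $X$ or $Y$ to be zero, and the nonzero one to be simple over $A$ or $B$ respectively.

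The argument is entirely straightforward once one notices that the relations defining a null-square algebra are exactly what is needed for $(0,\Im\mu)$ and $(\Im\nu,0)$ to be sub-objects; the mild obstacle is really just bookkeeping, namely making sure that in each case the induced structure maps are zero so that the remaining data collapses to a module over $A$ or $B$ alone. No use of the finite dimensionality hypothesis is needed in this part.
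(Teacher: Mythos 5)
Your proof is correct but takes a genuinely different route from the paper's. The paper identifies the Jacobson radical of $\Lambda$ as $\left(\begin{smallmatrix} \rad A & N \\ M & \rad B \end{smallmatrix}\right)$ (a nilpotent two-sided ideal with semisimple quotient $A/\rad A \times B/\rad B$) and reads off the simple modules from this quotient; that argument is shorter but leans on the Artinian/finite-dimensional framework. You instead work entirely inside the category $\s(\Lambda)$ and observe that the null-square relations are exactly what makes $(0,\Im\mu)$ and $(\Im\nu,0)$ into subobjects, so simplicity collapses everything. Your approach is more elementary, needs no radical theory, and---as you correctly point out---does not use finite dimensionality at all, so it actually yields a slightly more general statement. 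One small streamlining you could make in the case analysis: the alternative ``$X=0$'' from the first subobject already forces $\mu=0$ (since then $M\otimes_A X=0$), and symmetrically ``$Y=0$'' forces $\nu=0$, so in fact $\mu=\nu=0$ always holds for a simple module; you do not really have three cases, only the final one. This does not affect correctness, but it would make the exposition a bit cleaner.
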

\begin{proof}
We assert that the Jacobson radical of $\Lambda$ is  $\left(
  \begin{array}{cc}
    \rad A & N \\
    M & \rad B \\
  \end{array}
\right)$ where $\rad A$ and  $\rad B$ are the Jacobson radicals of $A$ and $B$. Indeed, this vector space is a nilpotent two-sided ideal, and the quotient of $\Lambda$ by it is semisimple.
\qed
\end{proof}
\normalsize
\begin{defi}
A \emph{corner algebra} $\Lambda$ is a square algebra with $N=0$. In this case, the objects of $\s(\Lambda)$ are denoted by $X {\overset{\mu}{\rightharpoonup}} Y$.
\end{defi}
In  this Section we consider Han's conjecture  for corner algebras first, and secondly for $E$-triangular algebras which will be defined below. We emphasize that for corner algebras we do not make any hypothesis on the projectivity of $M$. First we recall the following result.
\begin{prop} \cite[Proposition 10, p.86]{EILROSZEL}
Let $A$ and $B$ be finite dimensional smooth $k$-algebras. The $k$-algebra $A\otimes B$ is smooth.
\end{prop}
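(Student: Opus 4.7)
The plan is to bound the global dimension of $A\otimes_k B$ by $\mathrm{gldim}\,A + \mathrm{gldim}\,B$, yielding finiteness and hence smoothness. My approach uses projective bimodule resolutions, exploiting the canonical identification $(A\otimes B)^{e}\cong A^{e}\otimes_k B^{e}$ of enveloping algebras, where $(\place)^{e}:=(\place)\otimes_k(\place)^{\op}$.

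First I would invoke the fact that a smooth finite dimensional $k$-algebra $A$ admits a finite projective resolution $P_\bullet\to A$ as left $A^{e}$-module, of some length $n$ (and analogously $Q_\bullet\to B$ of length $m$ over $B^{e}$). Forming the total complex $T_\bullet:=\mathrm{Tot}(P_\bullet\otimes_k Q_\bullet)$, each term $P_i\otimes_k Q_j$ is projective over $A^{e}\otimes_k B^{e}=(A\otimes B)^{e}$, since any summand of $(A^{e})^{(I)}$ tensored over $k$ with any summand of $(B^{e})^{(J)}$ is a summand of the free $(A\otimes B)^{e}$-module of rank $|I\times J|$. Hence the augmentation $T_\bullet\to A\otimes_k B$ is an $(A\otimes B)^{e}$-projective resolution of length at most $n+m$.

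Next, for an arbitrary left $(A\otimes B)$-module $M$, I would apply $\place \otimes_{A\otimes B} M$ to $T_\bullet$. The resulting complex is a projective resolution of $M$ of length $\leq n+m$: exactness is preserved because every $P_i\otimes_k Q_j$ is right-$(A\otimes B)$-flat (projective bimodules are flat on either side), and each term $P_i\otimes_k Q_j\otimes_{A\otimes B} M$ is projective as a left $(A\otimes B)$-module since it is a direct summand of a module of the form $(A\otimes B)\otimes_k V$ with $V$ a $k$-vector space, which is free as left $(A\otimes B)$-module. This bounds the projective dimension of every $M$ by $n+m$, so $\mathrm{gldim}(A\otimes_k B)\leq n+m<\infty$.

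The main obstacle is the very first step, namely producing a finite $A^{e}$-projective resolution of $A$ from the mere finiteness of $\mathrm{gldim}\,A$. Bridging the definition of smoothness (finite global dimension) with the bimodule (Hochschild) dimension is classical and is where any separability hypothesis on $k$ would enter, typically via a Wedderburn--Malcev lifting of the semisimple quotient $A/\rad A$ so that $A$ becomes a split extension of a separable subalgebra by a nilpotent ideal; the cited Proposition of [EILROSZEL] packages this input.
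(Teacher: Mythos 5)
The paper offers no proof here---it simply cites \cite{EILROSZEL}---so the only question is whether your route is sound. Your Steps 2 and 3 are fine: a tensor product of projectives over $A^{e}$ and $B^{e}$ is projective over $(A\otimes B)^{e}$, the total complex gives a finite $(A\otimes B)^{e}$-resolution of $A\otimes B$, and applying $\place\otimes_{A\otimes B}M$ preserves exactness (projective bimodules are flat on either side) and sends projective bimodules to projective left modules, so $\mathrm{gldim}(A\otimes B)\leq \mathrm{pd}_{A^{e}}(A)+\mathrm{pd}_{B^{e}}(B)$. This is a legitimately different strategy from the one in \cite{EILROSZEL}, which argues directly on global dimension via change-of-rings estimates against the semisimple quotients rather than via Hochschild (bimodule) dimension.

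The real issue is your Step 1, and you should not frame it as ``classical packaging.'' The implication ``$\mathrm{gldim}\,A<\infty \Rightarrow \mathrm{pd}_{A^{e}}(A)<\infty$'' is \emph{false} over a general field, and in fact the proposition as stated above (with no hypothesis on $k$) is itself false: take $k$ non-perfect and $A=B=K$ a purely inseparable finite field extension of $k$. Then $\mathrm{gldim}\,A=0$, yet $K\otimes_k K\cong K[u]/(u^{p})$ is a local algebra with nonzero nilpotent radical, hence of infinite global dimension; correspondingly $\mathrm{pd}_{A^{e}}(A)=\infty$. The source \cite[Prop.\ 10]{EILROSZEL} carries the extra hypothesis that $(A/\rad A)\otimes(B/\rad B)$ be semisimple---automatic when $k$ is perfect, which is the setting of the rest of this paper---and that is exactly the hypothesis you need to make Step 1 go through (via Wedderburn--Malcev and the equality $\mathrm{pd}_{A^{e}}(A)=\mathrm{gldim}\,A$ when $A/\rad A$ is separable). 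With that hypothesis made explicit, your argument is correct and even gives the sharper bound $\mathrm{gldim}(A\otimes B)\leq \mathrm{gldim}\,A+\mathrm{gldim}\,B$; without it, the step you flag is not a gap to be ``bridged'' but an honest counterexample.
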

\begin{theo}\label{cornerfgld}
Let $\Lambda= \left(
  \begin{array}{cc}
    A & 0 \\
    M & B \\
  \end{array}
\right)$ be a corner finite dimensional algebra, where $M$ is a $B\!-\!A$-bimodule. If $A$ and $B$ are smooth, then $\Lambda$ is smooth.
\end{theo}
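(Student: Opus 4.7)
The plan is to establish the quantitative bound $\mathrm{gldim}\,\Lambda \le \mathrm{gldim}\,A + \mathrm{gldim}\,B + 1$; since $\Lambda$ is finite dimensional by hypothesis, this suffices for smoothness. The starting point is the short exact sequence of left $\Lambda$-modules
$$0 \to (0 \rightharpoonup Y) \to (X \overset{\mu}{\rightharpoonup} Y) \to (X \rightharpoonup 0) \to 0,$$
which is immediate from Proposition \ref{modulesandcategoryS}: the submodule is $Y$ with zero $X$-component, and the quotient is $X$ whose structure map $M \otimes_A X \to 0$ is forced to vanish. Using the standard projective-dimension inequality across a short exact sequence, it suffices to bound $\mathrm{pd}_\Lambda$ of the two extreme terms.

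For $(0 \rightharpoonup Y)$, choose a projective $B$-resolution $Q_\bullet \to Y$. By the $B$-analogue of Lemma \ref{projectiveone}, each $(0 \rightharpoonup Q_i)$ is projective over $\Lambda$, and $(0 \rightharpoonup Q_\bullet) \to (0 \rightharpoonup Y)$ is exact by inspection, so $\mathrm{pd}_\Lambda(0 \rightharpoonup Y) \le \mathrm{gldim}\,B$.

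For $(X \rightharpoonup 0)$, I would first treat the case $X = P$ projective over $A$. The auxiliary short exact sequence of $\Lambda$-modules
$$0 \to (0 \rightharpoonup M \otimes_A P) \to (P \overset{1}{\rightharpoonup} M \otimes_A P) \to (P \rightharpoonup 0) \to 0$$
has $\Lambda$-projective middle term by Lemma \ref{projectiveone}, while its left term has projective dimension at most $\mathrm{gldim}\,B$ by the previous paragraph; this forces $\mathrm{pd}_\Lambda(P \rightharpoonup 0) \le \mathrm{gldim}\,B + 1$. For a general $A$-module $X$, take an $A$-projective resolution $P_\bullet \to X$ of length $n \le \mathrm{gldim}\,A$; the complex $(P_\bullet \rightharpoonup 0) \to (X \rightharpoonup 0)$ is still exact, and each of its terms has projective dimension at most $\mathrm{gldim}\,B + 1$. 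A routine dimension-shifting argument then delivers $\mathrm{pd}_\Lambda(X \rightharpoonup 0) \le n + \mathrm{gldim}\,B + 1 \le \mathrm{gldim}\,A + \mathrm{gldim}\,B + 1$.

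Feeding these bounds back into the initial short exact sequence yields $\mathrm{gldim}\,\Lambda \le \mathrm{gldim}\,A + \mathrm{gldim}\,B + 1 < \infty$, and hence $\Lambda$ is smooth. The genuine content of the argument is the treatment of $(P \rightharpoonup 0)$ for $P$ an $A$-projective: this module is not itself $\Lambda$-projective, and interposing the canonical projective $(P \overset{1}{\rightharpoonup} M \otimes_A P)$ in the auxiliary short exact sequence is the decisive step. Observe that no projectivity or flatness hypothesis on $M$ enters anywhere.
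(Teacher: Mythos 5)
Your argument is correct, and it is a genuinely different route from the one taken in the paper. The paper's proof works only with the simple $\Lambda$-modules (which suffices because $\Lambda$ is finite dimensional), and its key device is a finite projective $B\!-\!A$-bimodule resolution $Q_\bullet\to M$: this exists because $B\otimes A^{\mathsf{op}}$ is smooth (the Eilenberg--Rosenberg--Zelinsky proposition), and the authors then form the double complex $Q_\bullet\otimes_A P_\bullet$ (augmented by $M$) and take its total complex to produce an explicit finite $\Lambda$-projective resolution of $S\rightharpoonup 0$. Your proof avoids the bimodule resolution of $M$ entirely and hence never invokes smoothness of $B\otimes A^{\mathsf{op}}$: you work with the filtration $0\to(0\rightharpoonup Y)\to(X\overset{\mu}{\rightharpoonup} Y)\to(X\rightharpoonup 0)\to 0$ of an arbitrary $\Lambda$-module, bound the two extreme pieces separately (using $(0\rightharpoonup Q_\bullet)$ for one and the auxiliary sequence $0\to(0\rightharpoonup M\otimes_A P)\to(P\overset{1}{\rightharpoonup} M\otimes_A P)\to(P\rightharpoonup 0)\to 0$ for the other), and finish with dimension shifting over $\Lambda$. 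What this buys you is a uniform statement for all modules, a more elementary argument, and the sharper explicit estimate $\mathrm{gldim}\,\Lambda\le\mathrm{gldim}\,A+\mathrm{gldim}\,B+1$, which the paper's total-complex construction does not immediately give (that construction bounds $\mathrm{pd}_\Lambda(S\rightharpoonup 0)$ by $\mathrm{gldim}\,A+\mathrm{pd}_{B\otimes A^{\mathsf{op}}}(M)+1$). What the paper's method offers in exchange is a concrete projective resolution, which foreshadows the cone construction used later in the null-square projective case. Your observation that no projectivity hypothesis on $M$ is used matches the authors' own remark.
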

\begin{proof}
It is well known that if a finite dimensional algebra $A$ is smooth the same holds for $A^{\mathsf{op}}$. By the previous proposition, $B\otimes A^{\mathsf{op}}$ is smooth.

Let
$0\to Q_q\to\cdots\to Q_1\to Q_0\to M\to 0$
be a finite resolution of $M$ by projective $B\!-\!A$-bimodules.

Firstly let $S\rightharpoonup 0$ be a simple $\Lambda$-module where $S$  is a simple $A$-module.
Let
$0\to P_p\to\cdots\to P_1\to P_0\to S\to 0$
be a resolution of $S$ by projective $A$-modules. Observe that the following sequence of $\Lambda$-modules obtained by tensoring the previous resolution by $\Lambda_1$
$$  0\to(P_p \underset{}   {\overset{1} {\rightharpoonup}} M\otimes_A P_p)\to\cdots\to                (P_0 \underset{}   {\overset{1} {\rightharpoonup}} M\otimes_A P_0)             \to (S \underset{}   {\overset{}
{\rightharpoonup}} 0)\to 0$$
 is not exact in general unless $M$ is a projective $A$-module. Instead we consider the double complex obtained by tensoring both resolutions over $A$:

\[
 \xymatrix{
&   & \vdots \ar[d] & \vdots \ar[d] & \vdots \ar[d] &  \\
&  \dots \ar[r] & Q_1\otimes_A P_2 \ar[r] \ar[d] & Q_1\otimes_A P_1 \ar[r] \ar[d] & Q_1\otimes_A P_0 \ar[r] \ar[d]  & 0 \\
&  \dots \ar[r] & Q_0\otimes_A P_2 \ar[r] \ar[d] & Q_0\otimes_A P_1 \ar[r] \ar[d] & Q_0\otimes_A P_0 \ar[r] \ar[d] &  0 \\
&  \dots \ar[r] & M\otimes_A P_2 \ar[r] \ar[d] & M\otimes_A P_1 \ar[r] \ar[d] & M\otimes_A P_0 \ar[r] \ar[d] &  0 \\
&   & 0  & 0  & 0  &
}
\]
 The total complex of this double complex is exact, since each column is obtained by tensoring an exact complex by a projective module. Hence we obtain a finite  exact sequence of $\Lambda$-modules:
\[
 \xymatrix{
& \vdots  & & & \vdots & & \\
&  P_2 \ar[d] &  \hspace{-8mm} \rightharpoonup  & \hspace{-6mm} M\otimes_A P_2  \ar[d] \hspace{3mm} \oplus & \hspace{-6mm} Q_0\otimes_A P_1 \ar[d] \ar[dl] \hspace{3mm} \oplus &  \hspace{-6mm} Q_1\otimes_A P_0  \ar[dl]  \\
&  P_1 \ar[d] & \hspace{-8mm} \rightharpoonup  & \hspace{-6mm} M\otimes_A P_1  \ar[d] \hspace{3mm} \oplus & \hspace{-6mm} Q_0\otimes_A P_0 \ar[dl]  &  &  \\
&  P_0 \ar[d] & \hspace{-8mm} \rightharpoonup  & \hspace{-6mm} M\otimes_A P_0 \ar[d] &  &  &  \\
&  S  \ar[d] & \hspace{-8mm} \rightharpoonup  & 0  &  &    &  \\
&  0 &   &   &   &   &
}
\]
We assert that this is a projective resolution of   $S\rightharpoonup 0$. Indeed the $i$-th module is
$$\left(P_i\rightharpoonup M\otimes P_i\right) \oplus \left(0\rightharpoonup Q_0\otimes_AP_{i-1}\right)\oplus\cdots\oplus \left(0\rightharpoonup Q_{i-1}\otimes_A P_0\right).$$
The first summand $\Lambda_1\otimes_A P_i$ is projective by Remark \ref{projectiveone}. For the other summands, we first notice that if $Q$ is a projective $B\!-\!A$-bimodule and $X$ is any $A$-module,   $Q\otimes_A X$ is a projective $B$-module. Moreover, for a corner algebra $\Lambda$, if $W$ is a projective left $B$-module, then the left $\Lambda$-module $0\rightharpoonup W$ is projective.

 Secondly let $T$ be a simple $B$-module and let $0\rightharpoonup T$ the corresponding simple $\Lambda$-module. Let $R_\bullet \to
 T$ be a  finite $B$-projective resolution of $T$, then $(0\rightharpoonup R_\bullet)\to  (0\rightharpoonup T)$ is a finite  resolution of $0\rightharpoonup T$ by projective $\Lambda$-modules. \qed
\end{proof}

Now, we will define $E$-triangular algebras with respect to a chosen system $E$. We define first a quiver inferred  from the Peirce decomposition $\Lambda=\bigoplus_{x,y\in E}y\Lambda x$.

\begin{defi}\label{Equiver}
Let $\Lambda$ be a $k$-algebra and let $E$ be a system of $\Lambda$. The \emph{Peirce $E$-quiver} $Q_E$ has set of vertices  $E$;  for $x$ and $y$  different elements of $E$ there is an arrow from $x$ to $y$  in case $y\Lambda x\neq 0$. Note that $Q_E$ contains no loops.
\end{defi}
\begin{defi}
An algebra $\Lambda$ is $E$-\emph{triangular} with respect to a non trivial system $E$ if $Q_E$ has no oriented cycles.

\end{defi}

\begin{rema}  In case $|E|=2$, the Peirce $E$-quiver of an $E$-triangular algebra is an arrow, and the algebra is a corner algebra.
Observe that a finite dimensional algebra which is $E$-triangular with respect to a system $E$ may have oriented cycles in its Gabriel quiver.

\end{rema}

\begin{lemm}
Let $\Lambda$ be a $k$-algebra which is $E$-triangular. There exists a system $F$ of two idempotents  such that  $\Lambda$ is a corner algebra.
\end{lemm}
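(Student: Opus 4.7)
The plan is to exploit the fact that a finite directed acyclic graph always admits a sink (equivalently, a source). Since $\Lambda$ is $E$-triangular, the Peirce $E$-quiver $Q_E$ has finite vertex set and no oriented cycles, so I can pick a vertex $x_0\in E$ which is a sink, that is, a vertex with no outgoing arrows. By the definition of $Q_E$, this means that $y\Lambda x_0 = 0$ for every $y\in E$ with $y\neq x_0$.

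Next I would form a new system with exactly two idempotents by "collapsing" the remaining vertices. Concretely, set
$$e_2 := x_0, \qquad e_1 := 1 - x_0 = \sum_{y\in E,\, y\neq x_0} y.$$
The orthogonality and completeness in $E$ immediately give that $F:=\{e_1,e_2\}$ is a system of $\Lambda$ (with $e_1\neq 0$ because $|E|\geq 2$; the case $|E|=2$ is already covered by the preceding remark, where one simply takes $F=E$).

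Now I would compute the Peirce decomposition of $\Lambda$ with respect to $F$. The key block is
$$e_1 \Lambda e_2 \;=\; \sum_{y\in E,\, y\neq x_0} y\Lambda x_0 \;=\; 0,$$
by the sink property of $x_0$. Writing $A := e_1\Lambda e_1$, $B := e_2\Lambda e_2$ and $M := e_2\Lambda e_1$, one therefore has
$$\Lambda \;=\; \begin{pmatrix} A & 0 \\ M & B \end{pmatrix},$$
which is a corner algebra in the sense of the definition, with $N=0$ (and associativity of the product on $\Lambda$ is inherited from that of the original algebra).

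There is really no hard step here: the only ingredient beyond unwinding definitions is the standard fact that a finite DAG has a sink, which is a one-line induction on $|E|$. The proof would not be any different if one chose a source $x_0$ instead and swapped the roles of $e_1$ and $e_2$; in that case the zero block would be the lower-left one, and one recovers a corner algebra after an obvious transposition.
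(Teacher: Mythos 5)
Your proof is correct and is essentially the paper's own argument, just in dual form: the paper picks a source vertex $e$ (no incoming arrows, so $e\Lambda x=0$ for $x\neq e$) and sets $f=\sum_{x\neq e}x$, whereas you pick a sink $x_0$ (no outgoing arrows, so $y\Lambda x_0=0$ for $y\neq x_0$) and set $e_1=1-x_0$, $e_2=x_0$; both choices kill the upper-right Peirce block and exhibit $\Lambda$ as a corner algebra with respect to a two-element system, and you already note the two variants are interchangeable.
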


\begin{proof}
The Peirce $E$-quiver has no oriented cycles, it is finite and it has at least two vertices. Then there exists a source vertex $e$,  that is, a vertex with no arrows ending at it. The idempotent  $f=\sum_{x\neq e}x$ is not zero. Since $e\Lambda f=0$, the algebra $\Lambda$ is a corner algebra with respect to the system $F=\{e,f \}$.
\qed
\end{proof}
\begin{coro}\label{triangularfgld}
Let $\Lambda$ be a finite dimensional $k$-algebra which is $E$-triangular with respect to a system $E$. If $x\Lambda x$ is smooth for every $x\in E$, then $\Lambda$ is smooth.
\end{coro}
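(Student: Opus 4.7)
The plan is to argue by induction on the cardinality $n=|E|$, using the corner algebra theorem (Theorem \ref{cornerfgld}) together with the source-vertex lemma immediately preceding the corollary.

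For the base case $n=2$, the preceding remark says that an $E$-triangular algebra with $|E|=2$ is automatically a corner algebra. Writing $E=\{e_1,e_2\}$ with the unique arrow going from $e_1$ to $e_2$, we have $\Lambda=\left(\begin{smallmatrix} e_1\Lambda e_1 & 0 \\ e_2\Lambda e_1 & e_2\Lambda e_2\end{smallmatrix}\right)$, both diagonal blocks are smooth by hypothesis, and Theorem \ref{cornerfgld} gives that $\Lambda$ is smooth.

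For the inductive step, assume the result for systems of size less than $n$. Apply the source lemma to obtain a source vertex $e\in E$ of the Peirce quiver $Q_E$, and set $f=\sum_{x\neq e}x$, so that $\{e,f\}$ is a system and $\Lambda$ becomes a corner algebra
$$\Lambda=\left(\begin{array}{cc} e\Lambda e & 0 \\ f\Lambda e & f\Lambda f\end{array}\right).$$
The diagonal block $e\Lambda e$ is smooth by hypothesis since $e\in E$. For the block $f\Lambda f$, observe that $E'=E\setminus\{e\}$ is a system of $f\Lambda f$: its elements are non-zero orthogonal idempotents summing to $f$, the identity of $f\Lambda f$. Since $fx=x=xf$ for every $x\in E'$, the Peirce decomposition of $f\Lambda f$ with respect to $E'$ is $\bigoplus_{x,y\in E'} y\Lambda x$, so the Peirce $E'$-quiver of $f\Lambda f$ is the full subquiver of $Q_E$ on the vertices $E'$. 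This subquiver has no oriented cycles since $Q_E$ has none, hence $f\Lambda f$ is $E'$-triangular. Moreover, for every $x\in E'$ we have $x(f\Lambda f)x = x\Lambda x$, which is smooth by hypothesis. Since $|E'|=n-1$, the inductive hypothesis applies and $f\Lambda f$ is smooth. A final application of Theorem \ref{cornerfgld} to the corner algebra displayed above yields that $\Lambda$ is smooth.

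The only delicate point is the verification that the restriction $f\Lambda f$ remains triangular with respect to $E'$ and that its corner algebras at the idempotents of $E'$ coincide with the original $x\Lambda x$; both are immediate from $fx=xf=x$ and the absence of cycles in a subquiver of an acyclic quiver. No projectivity hypothesis on the off-diagonal bimodules $y\Lambda x$ is needed, in keeping with the statements of Theorem \ref{cornerfgld} and the surrounding discussion.
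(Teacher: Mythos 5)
Your proof is correct and follows essentially the same route as the paper: pick a source vertex $e$, form the corner decomposition with respect to $\{e,f\}$ where $f=1-e$, observe that $f\Lambda f$ is $E'$-triangular with $x(f\Lambda f)x=x\Lambda x$ for $x\in E'=E\setminus\{e\}$, and close the induction with Theorem \ref{cornerfgld}. The only cosmetic difference is that you spell out the base case $|E|=2$ explicitly, which the paper leaves implicit.
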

\begin{proof}
We proceed by induction on the number of vertices. Let $e$ be a source vertex of $Q_E$, let $f=\sum_{x\neq e}x=1-e$ and let $F$ be the system $\{e,f\}$.

Let $E'=E\setminus\{e\}$, which is a system of the algebra $f\Lambda f$. The $E'$-quiver of $f\Lambda f$ has no oriented cycles since $y (f\Lambda f )x = y\Lambda x$ for every $x,y\in E'$.  By hypothesis the algebras  $x(f\Lambda f)x=x\Lambda x$ are smooth for every $x\in E'$. By induction $f\Lambda f$ is smooth.
Theorem \ref{cornerfgld} provides the result since $e\Lambda e$ is smooth and $\Lambda$ is a corner algebra with respect to $F$, as in the proof of the previous lemma.
\qed
\end{proof}
By definition the Hochschild homology vector spaces  of a $k$-algebra $\Lambda$ with coefficients in a $\Lambda$-bimodule $Z$ are
$$H_*(\Lambda, Z)= \Tor^{\Lambda\otimes \Lambda^{\mathsf{op}}}_*(\Lambda, Z)$$
where the later is also denoted by $\Tor^{\Lambda-\Lambda}_*(\Lambda, Z).$

Next we recall the computation of the Hochschild homology of a corner algebra, see for instance \cite{LODAY1998, CIBILS2000}. The following well known result will be  required;  we provide a sketch of its  proof for the convenience of the reader.
\begin{lemm}\label{byseparable}
Let $\Lambda$ be a $k$-algebra, let $D$ be a separable subalgebra of $\Lambda$, let $Z$ be a $\Lambda$-bimodule and let $Z_D= Z/  \langle dz-zd \mid z\in Z, d\in D\rangle$. The homology of the  complex
$$\cdots\stackrel{b}{\to}Z\otimes_{D-D}\left(\Lambda\otimes_D\Lambda\otimes_D\Lambda\right)
\stackrel{b}{\to}Z\otimes_{D-D}\left(\Lambda\otimes_D\Lambda\right)\stackrel{b}{\to}
Z\otimes_{D-D}\Lambda\stackrel{b}{\to}Z_D\to 0$$
\normalsize
is $H_*(\Lambda, Z)$,
where $\otimes_{D-D}$ stands for $\otimes_{{D\otimes D^{\mathsf{op}}}}$ and  where the maps $b$  are given by the usual formulas for computing Hochschild homology:
 \begin{align*}
b(x_0\otimes x_1\otimes \dots\otimes x_n) &= x_0x_1\otimes x_2\otimes  \dots x_n \\
&+\sum_{0}^{n-1}(-1)^i x_0\otimes \dots\otimes x_ix_{i+1}\otimes \dots\otimes x_n \\
&+(-1)^n x_nx_0\otimes x_2\otimes \dots \otimes x_{n-1}.
\end{align*}
\end{lemm}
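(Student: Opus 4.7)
The plan is to realise the displayed complex as $Z\otimes_{\Lambda^e}-$ applied to the relative (over $D$) bar resolution of $\Lambda$, and use separability of $D$ to show that this relative bar complex is in fact a projective $\Lambda$-bimodule resolution of $\Lambda$.

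First I would introduce the relative bar complex $B^D_\bullet(\Lambda)$ with $B^D_n(\Lambda)=\Lambda^{\otimes_D(n+2)}$, viewed as a $\Lambda$-bimodule via the two outer factors, equipped with the standard alternating differential $b'(x_0\otimes\cdots\otimes x_{n+1})=\sum_{i=0}^{n}(-1)^i x_0\otimes\cdots\otimes x_i x_{i+1}\otimes\cdots\otimes x_{n+1}$. Augmented by the multiplication $\Lambda\otimes_D\Lambda\to\Lambda$, this complex is exact: the map $s(x_0\otimes\cdots\otimes x_{n+1})=1\otimes x_0\otimes\cdots\otimes x_{n+1}$ is a well-defined contracting homotopy of right $\Lambda$-modules, and exactness is preserved after forgetting one side of the bimodule structure.

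Second, I would verify that each $B^D_n(\Lambda)$ is projective as a $\Lambda$-bimodule, which is where separability of $D$ is essential. Let $e=\sum u_i\otimes v_i\in D\otimes D^{\mathsf{op}}$ be a separability idempotent, so that $\sum u_iv_i=1$ and $de=ed$ for every $d\in D$. For any $D$-bimodule $X$, iterated use of $e$ gives an explicit $\Lambda$-bimodule section of the canonical surjection $\Lambda\otimes_k X\otimes_k\Lambda\twoheadrightarrow \Lambda\otimes_D X\otimes_D\Lambda$, namely $\lambda\otimes_D x\otimes_D\mu\mapsto \sum\lambda u_i\otimes v_ixu_j\otimes v_j\mu$; the centrality property of $e$ precisely ensures this is independent of the representatives. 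Hence $\Lambda\otimes_D X\otimes_D \Lambda$ is a direct summand of the free $\Lambda$-bimodule $\Lambda\otimes_k X\otimes_k\Lambda$, and is therefore projective. Taking $X=\Lambda^{\otimes_D n}$ gives the desired projectivity.

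Third, I would apply $Z\otimes_{\Lambda^e}-$ to the resolution $B^D_\bullet(\Lambda)\to\Lambda$. Since the outer $\Lambda$'s are absorbed by $Z$, one has the natural identification $Z\otimes_{\Lambda^e}\bigl(\Lambda\otimes_D X\otimes_D\Lambda\bigr)\cong Z\otimes_{D^e}X$ for any $D$-bimodule $X$, and in particular $Z\otimes_{\Lambda^e}(\Lambda\otimes_D\Lambda)\cong Z\otimes_{D^e}D=Z_D$ (corresponding to the augmentation). The induced differential on the outer ends produces precisely the two extreme terms $x_0x_1\otimes\cdots$ and $(-1)^n x_nx_0\otimes\cdots$ of the formula in the statement (with the multiplications by $x_0$ being the bimodule actions of $\Lambda$ on $Z$), while the interior terms come from $b'$. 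By construction the homology is $\operatorname{Tor}^{\Lambda^e}_\ast(Z,\Lambda)=H_\ast(\Lambda,Z)$.

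The only delicate point is the projectivity step: one must check that the twisted double-idempotent section is well-defined on tensor products over $D$ and is simultaneously a left and right $\Lambda$-module map; both rely on the two-sided centrality $de=ed$ of the separability idempotent and hold essentially by inspection. Everything else is formal bar-complex bookkeeping.
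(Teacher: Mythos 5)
Your argument is correct and follows essentially the same route as the paper: relative bar resolution over $D$ with the contracting homotopy $s(x_0\otimes\cdots\otimes x_{n+1})=1\otimes x_0\otimes\cdots\otimes x_{n+1}$, projectivity of each term as a $\Lambda$-bimodule via separability of $D$, and then applying $Z\otimes_{\Lambda\otimes\Lambda^{\mathsf{op}}}-$. The only cosmetic difference is in the projectivity step: the paper invokes the abstract fact that $D\otimes D^{\mathsf{op}}$ is separable (hence semisimple) so every $D$-bimodule is projective and therefore $\Lambda\otimes_D X\otimes_D\Lambda$ is a projective $\Lambda$-bimodule, whereas you build the retraction explicitly from a separability idempotent — a valid unwinding of the same principle.
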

\begin{proof}
Consider the complex with differential $d$ defined by the usual formulas for the canonical resolution of $\Lambda$ over the ground field
$$\dots \stackrel{d}{\to}\Lambda\otimes_D\Lambda\otimes_D \Lambda\stackrel{d}{\to}\Lambda\otimes_D \Lambda\stackrel{d}{\to}\Lambda\to 0.$$
The map $s$ given by $s(x_1\otimes \dots \otimes  x_n)=1\otimes  x_1\otimes  \dots \otimes  x_n$ is well defined and verifies $ds+sd=1$, this proves that the complex is acyclic. Since $D$ is separable,  $D\otimes D^{op}$ is also separable and any $D$-bimodule is projective. Consequently  the acyclic complex above is a projective resolution of $\Lambda$ by projective $\Lambda$-bimodules.
\normalsize
The statement of the lemma is obtained by applying the functor $Z\otimes_{D-D} -$ to this resolution and observing that  $Z\otimes_{\Lambda - \Lambda}\left(\Lambda\otimes_DX\otimes_D\Lambda\right)$ is canonically isomorphic to $Z\otimes_{D-D} X$ for any $D$-bimodule $X$.
\qed
\end{proof}
\begin{theo}\cite{LODAY1998,CIBILS2000}\label{diago} Let $\Lambda = \left(
                                                    \begin{array}{cc}
                                                      A & 0 \\
                                                      M & B \\
                                                    \end{array}
                                                  \right)$ be a corner algebra, where $A$ and $B$ are $k$-algebras and $M$ is a $B\!\!-\!\!A$-bimodule. There is a decomposition
                                                  $$HH_*(\Lambda)=HH_*(A)\oplus HH_*(B)$$
\end{theo}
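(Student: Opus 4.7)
My approach is to apply Lemma~\ref{byseparable} with the separable subalgebra $D = ke_1 \oplus ke_2 \cong k\times k$ of $\Lambda$, where $e_1, e_2 \in \Lambda$ are the two diagonal matrix idempotents. The Peirce decomposition of $\Lambda$ as a $D$-bimodule reads $\Lambda = A \oplus M \oplus B$ with $A = e_1\Lambda e_1$, $B = e_2\Lambda e_2$, $M = e_2\Lambda e_1$, and crucially $e_1\Lambda e_2 = 0$ thanks to the corner hypothesis.

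Next I would unfold the $D$-relative Hochschild complex $\Lambda \otimes_{D-D} \Lambda^{\otimes_D n}$ provided by Lemma~\ref{byseparable}. Balancing over $D$ between consecutive factors, together with the trace-like $D$-bimodule identification on the outer factor, amounts to inserting idempotents $e_i$ between any two consecutive positions in $x_0 \otimes x_1 \otimes \cdots \otimes x_n$ (the cyclic position $n\!\to\! 0$ included). Therefore the degree-$n$ space splits as a direct sum, indexed by functions $\sigma\colon \ZZ/(n+1)\ZZ \to \{1,2\}$, of the subspaces
\[
\bigotimes_{k=0}^{n} e_{\sigma(k+1)}\Lambda e_{\sigma(k)}.
\]
The corner vanishing $e_1\Lambda e_2 = 0$ now kills any $\sigma$ that contains a transition $\sigma(k)=2 \to \sigma(k+1)=1$. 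Since the sequence is cyclic, any occurrence of the value $2$ would necessarily be followed, somewhere around the cycle, by such a forbidden transition. Hence only the constant functions $\sigma \equiv 1$ and $\sigma \equiv 2$ survive.

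The constant $\sigma \equiv 1$ block is $A^{\otimes(n+1)}$ (since $e_1 D e_1 = k$, so the $D$-balanced tensors over this block reduce to ordinary $k$-tensors), and analogously for $\sigma \equiv 2$ and $B$. It remains to check that the differential $b$ from Lemma~\ref{byseparable} preserves this splitting: the inner faces $x_i x_{i+1}$ and the cyclic face $x_n x_0$ use the product of $\Lambda$, which is graded by the Peirce blocks, so no cross-terms between $A$ and $B$ can be created. The complex thus decomposes as the direct sum of the standard Hochschild complexes of $A$ and of $B$, and passing to homology gives the stated decomposition $HH_*(\Lambda)=HH_*(A)\oplus HH_*(B)$.

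The only genuinely nontrivial step is the cyclic combinatorial argument forcing $\sigma$ to be constant; the identification of each constant-$\sigma$ summand with the bar complex of $A$ or $B$, and the compatibility of $b$ with the decomposition, are direct verifications.
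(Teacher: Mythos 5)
Your proposal is correct and follows essentially the same route as the paper: the paper also invokes Lemma~\ref{byseparable} with the separable subalgebra $D=ke\times kf$ of diagonal idempotents, uses the Peirce decomposition $\Lambda=A\oplus B\oplus M$, and observes that the cyclic $D$-balancing kills every mixed tensor block so only the pure $A$- and pure $B$-blocks survive. Your version merely makes the paper's telegraphic list of vanishing two-fold tensors explicit as a cyclic combinatorial argument on the index functions $\sigma$, and spells out the (immediate) compatibility of the differential $b$ with the splitting.
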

\begin{proof}
Let $e$ be the idempotent $\left(
                            \begin{array}{cc}
                              1_A & 0 \\
                              0 & 0 \\
                            \end{array}
                          \right)$
and let $f=1-e=\left(
                            \begin{array}{cc}
                              0 & 0 \\
                              0 & 1_B \\
                            \end{array}
                          \right)$. Let $D=\left(
                          \begin{array}{cc}
                            k & 0 \\
                            0 & k \\
                          \end{array}
                        \right) = ke\times kf$; note that $D$ is a separable subalgebra of $\Lambda$. We assert that the  complex  of the previous lemma is actually the direct sum of the complexes that compute $HH_*(A)$ and $HH_*(B)$.
Indeed, notice that the $D$-bimodule decomposition $\Lambda=A\oplus B\oplus M$ provides a direct sum decomposition
$$\Lambda\otimes_{D-D}\left( \Lambda\otimes_D\cdots\otimes_D\Lambda\right)= (A\otimes\cdots\otimes A) \oplus (B\otimes\cdots\otimes B)$$
since
                        $$0=M\otimes_{D-D}B=M\otimes_{D-D}A=M\otimes_{D-D}M=B\otimes_{D-D}M=A\otimes_{D-D}M$$
and $A\otimes_{D-D}A= A\otimes A$ while $B\otimes_{D-D}B= B\otimes B$. Observe that  in degree $0$ we obtain $\Lambda\otimes_{D-D}D= A\oplus B$.
\qed
\end{proof}
\begin{coro}\label{triangularHH}
For any $k$-algebra  $\Lambda$  which is $E$-triangular with respect to a system $E$, there is a decomposition
$$HH_*(\Lambda)=\bigoplus_{x\in E}HH_*(x\Lambda x).$$
\end{coro}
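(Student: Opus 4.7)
The plan is to proceed by induction on the cardinality $|E|$, using Theorem \ref{diago} as the base case and the source-vertex argument from the lemma preceding Corollary \ref{triangularfgld} for the inductive step.

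For $|E|=1$ the statement is trivial, and for $|E|=2$ the Peirce $E$-quiver of an $E$-triangular algebra is a single arrow, so $\Lambda$ is a corner algebra with respect to $E$ and Theorem \ref{diago} gives the desired decomposition directly.

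For the inductive step, suppose $|E|\geq 2$ and the result holds for all systems of smaller cardinality. Since $Q_E$ has no oriented cycles, it admits a source vertex $e\in E$. Setting $f=1-e=\sum_{x\in E,\, x\neq e}x$ and $F=\{e,f\}$, the algebra $\Lambda$ is a corner algebra with respect to $F$ (exactly as in the lemma preceding Corollary \ref{triangularfgld}), so Theorem \ref{diago} yields
\[
HH_*(\Lambda)=HH_*(e\Lambda e)\oplus HH_*(f\Lambda f).
\]
Let $E'=E\setminus\{e\}$, which is a system of $f\Lambda f$ since $f=\sum_{x\in E'}x$ and the elements of $E'$ remain non zero orthogonal idempotents in $f\Lambda f$. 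For $x,y\in E'$ we have $xf=x=fx$ and $yf=y=fy$, hence $y(f\Lambda f)x=y\Lambda x$. Consequently the Peirce $E'$-quiver of $f\Lambda f$ is the full subquiver of $Q_E$ on $E'$, which has no oriented cycles; therefore $f\Lambda f$ is $E'$-triangular.

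By the inductive hypothesis applied to $f\Lambda f$ with the system $E'$,
\[
HH_*(f\Lambda f)=\bigoplus_{x\in E'}HH_*\bigl(x(f\Lambda f)x\bigr)=\bigoplus_{x\in E'}HH_*(x\Lambda x),
\]
and substituting into the previous decomposition gives the claim. There is no real obstacle here: the only point worth verifying is the identification $y(f\Lambda f)x=y\Lambda x$ for $x,y\in E'$, which immediately ensures both that $E'$ is a system for $f\Lambda f$ and that the triangular property descends from $Q_E$ to the Peirce $E'$-quiver of $f\Lambda f$, so that the inductive hypothesis applies.
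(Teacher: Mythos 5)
Your proposal is correct and follows exactly the approach the paper indicates: the paper's own proof is a one-line sketch saying the argument is the same induction as in Corollary \ref{triangularfgld} once a source vertex of $Q_E$ is chosen, and you have filled in those details faithfully (source vertex $e$, corner decomposition via Theorem \ref{diago}, identification $y(f\Lambda f)x=y\Lambda x$, inductive hypothesis applied to $f\Lambda f$ with the system $E'$).
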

\begin{proof}
The idea of the proof is similar to the proof of Corollary \ref{triangularfgld}. It follows by induction once a source vertex of $Q_E$ is chosen. \qed
\end{proof}
Next we turn to Han's conjecture  that we recall: if $A$ is a finite dimensional algebra over a field such that $HH_n(A)=0$ for  $n$ large enough, then $A$ is smooth.
\begin{theo}
Finite dimensional corner $k$-algebras built on the class of $k$-algebras $\H$ verifying Han's conjecture also belong to $\H$. \end{theo}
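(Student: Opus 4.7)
The plan is to show directly that $\Lambda \in \H$ by assembling the two main ingredients already established in this section. Suppose $\Lambda = \left(\begin{smallmatrix} A & 0 \\ M & B \end{smallmatrix}\right)$ is finite dimensional with $A, B \in \H$, and assume $HH_n(\Lambda) = 0$ for all $n$ sufficiently large; we must conclude that $\Lambda$ is smooth.

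The first step uses the Hochschild homology decomposition for corner algebras. By Theorem \ref{diago}, we have $HH_*(\Lambda) = HH_*(A) \oplus HH_*(B)$. Hence the vanishing of $HH_n(\Lambda)$ in large degrees forces both $HH_n(A) = 0$ and $HH_n(B) = 0$ for all sufficiently large $n$. Note that finite dimensionality of $\Lambda$ immediately implies finite dimensionality of $A$ and $B$, since they are quotients (and also subalgebras) of $\Lambda$, so Han's conjecture applies meaningfully to each.

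The second step invokes the hypothesis that $A, B \in \H$: each satisfies Han's conjecture, so the vanishing of their Hochschild homology in high degrees implies that both $A$ and $B$ are smooth, i.e.\ of finite global dimension.

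Finally, Theorem \ref{cornerfgld} shows that a finite dimensional corner algebra built from two smooth algebras $A$ and $B$ (with an arbitrary $B\!-\!A$-bimodule $M$, no projectivity assumption needed) is itself smooth. Applying this to $\Lambda$ yields that $\Lambda$ has finite global dimension, which is the conclusion of Han's conjecture for $\Lambda$. Thus $\Lambda \in \H$. There is no real obstacle here: both nontrivial inputs, namely the splitting of Hochschild homology and the inheritance of finite global dimension, have been proved above, and the proof is essentially a two-line deduction combining them.
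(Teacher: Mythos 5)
Your proposal is correct and follows essentially the same two-step argument as the paper: use Theorem \ref{diago} to transfer vanishing of Hochschild homology from $\Lambda$ to $A$ and $B$, apply the hypothesis $A,B\in\H$ to get smoothness of $A$ and $B$, and conclude via Theorem \ref{cornerfgld}. The only addition is your remark that $A$ and $B$ inherit finite dimensionality from $\Lambda$, which the paper leaves implicit.
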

\begin{proof}
Let $\Lambda = \left(
                                                    \begin{array}{cc}
                                                      A & 0 \\
                                                      M & B \\
                                                    \end{array}
                                                  \right)$
be a finite dimensional  corner algebra and suppose  $HH_*(\Lambda)=0$  for large enough degrees. Theorem \ref{diago} shows that the same holds for $A$ and $B$. Since $A$ and $B$ belong to $\H$, they are smooth. By Theorem \ref{cornerfgld}, $\Lambda$ is smooth.
\qed
\end{proof}
\begin{coro}\label{Hantriangular}
Let $\Lambda$ be a finite dimensional $k$-algebra which is $E$-triangular with respect to a system $E$ of $\Lambda$. If for every $x\in E$ the algebras $x\Lambda x$ belong to $\H$, then $\Lambda$ belongs to $\H$.
\end{coro}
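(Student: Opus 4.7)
The plan is to combine Corollary \ref{triangularHH} and Corollary \ref{triangularfgld} in exactly the same way the preceding theorem (Han's conjecture for corner algebras) combines Theorem \ref{diago} with Theorem \ref{cornerfgld}. So I would start by assuming that $\Lambda$ has vanishing Hochschild homology in large enough degrees, and argue that this forces each $x\Lambda x$ to have the same property, after which the hypothesis $x\Lambda x\in\H$ takes over.

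Concretely, first I would invoke Corollary \ref{triangularHH}, which gives the decomposition $HH_*(\Lambda)=\bigoplus_{x\in E}HH_*(x\Lambda x)$. The hypothesis that $HH_n(\Lambda)=0$ for $n\gg 0$ then implies, summand by summand, that $HH_n(x\Lambda x)=0$ for $n\gg 0$ for every $x\in E$. Second, since by assumption each $x\Lambda x$ belongs to the class $\H$, the definition of $\H$ forces each $x\Lambda x$ to be smooth (\emph{i.e.} of finite global dimension). Third, I would apply Corollary \ref{triangularfgld}, which guarantees that an $E$-triangular finite dimensional algebra whose diagonal blocks $x\Lambda x$ are all smooth is itself smooth; this yields that $\Lambda$ is smooth, hence $\Lambda\in\H$.

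There is essentially no obstacle here: every ingredient has been assembled earlier in the section, and the argument is a one-line concatenation of two corollaries with the defining property of $\H$. The only thing worth double-checking is that each $x\Lambda x$ is finite dimensional (which is immediate since it is a Peirce block of a finite dimensional algebra), so that the statement ``$x\Lambda x\in\H$'' is meaningful and actually entails smoothness. Everything else is automatic, so the proof need only point to Corollary \ref{triangularHH}, the hypothesis, and Corollary \ref{triangularfgld}, in that order.
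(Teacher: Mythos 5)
Your proof is correct and follows exactly the paper's route: the paper's own proof simply cites Corollaries \ref{triangularfgld} and \ref{triangularHH}, which is precisely the concatenation you spell out. Your extra remark that each $x\Lambda x$ is finite dimensional (being a Peirce block) is a sound and worthwhile sanity check.
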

\begin{proof}
The proof follows from Corollaries \ref{triangularfgld} and \ref{triangularHH}.\qed
\end{proof}
\begin{rema}\label{agree}
Let $\Lambda$ be a smooth finite dimensional algebra such that $\Lambda/\rad \Lambda$ is a product of copies of the ground field $k$ and which admits a Wedderburm decomposition $\Lambda = D \oplus \rad \Lambda$ where $D$ is a subalgebra of $\Lambda$. Note that if $k$ is perfect a Wedderburm decomposition always exists.  If $\Lambda$ is smooth, it is proven by B. Keller in \cite[2.5]{KELLER} that there is a $K$-theoretical equivalence between $\Lambda$ and $D$. In particular the cyclic homologies of these algebras are isomorphic, as well as the Hochschild homologies due to the Connes' long exact sequences relying cyclic and Hochschild homologies, for $\Lambda$ and $\Lambda/\rad\Lambda$, see for instance \cite{WEIBEL}. Consequently the Hochschild homology of $\Lambda$ is concentrated in degree zero. In this situation, it follows from Han's conjecture that if the Hochschild homology vanishes in large enough degrees, then it actually vanishes in all positive degrees.

We observe that in the situation of  Corollary \ref{Hantriangular}, the result that we have proven agrees with the previous observation. Indeed, we have shown using Corollary \ref{triangularHH} that Hochschild homology is the direct sum of the Hochschild homologies at the idempotents of the system.

\end{rema}

\section{\sf Hochschild homology of null-square projective algebras}\label{HHnullsquareprojective}

 In this section we consider a \emph{null-square projective algebra} $\Lambda$,  that is, a null-square algebra  $\Lambda =\left(
                                                                      \begin{array}{cc}
                                                                        A & N \\
                                                                        M & B\\
                                                                      \end{array}
                                                                    \right)$
where $M$ and $N$ are  projective $B\!-\!A$ and $A\!-\!B$-bimodules respectively; we recall that $MN=NM=0$. We will provide a long exact sequence which computes $HH_*(\Lambda)$.

First we consider a cleft extension algebra $\Lambda=C\oplus I$, where $C$ is a subalgebra and $I$ is a two-sided ideal, see \cite[p. 284]{MACLANE}. Let
$$K^1_C(\Lambda)=\Ker \left(\Lambda\otimes_C\Lambda\stackrel{d}{\longrightarrow}\Lambda\right)$$
where $d$ is given by the product of $\Lambda$. In case $I$ is projective as a $C$-bimodule we will provide a resolution of $K^1_C(\Lambda)$ by projective $\Lambda$-bimodules. This resolution specialized to a null-square projective algebra will allow  to compute  $\Tor_*^{\Lambda-\Lambda}(K^1_C(\Lambda),\Lambda)$. The  mentioned long exact sequence {will be obtained as}  the $\Tor$ exact sequence associated to the short exact sequence of $\Lambda$-bimodules
\begin{equation}\label{theshort}
   0\longrightarrow K^1_C(\Lambda)\longrightarrow\Lambda\otimes_C\Lambda\longrightarrow\Lambda\longrightarrow 0.
\end{equation}
\begin{rema}
This short exact sequence splits as a sequence of $C$-bimodules but it does not split as a sequence of $\Lambda$-bimodules.
\end{rema}
\begin{lemm}
Let $\Lambda=C\oplus I$ be a cleft extension algebra. The  following complex is acyclic:
$$\cdots\stackrel{d}\longrightarrow\Lambda\otimes_CI\otimes_CI\otimes_C\Lambda
\stackrel{d}\longrightarrow\Lambda\otimes_CI\otimes_C\Lambda
\stackrel{d}\longrightarrow\Lambda\otimes_C\Lambda
\stackrel{d}{\longrightarrow}\Lambda
\longrightarrow 0$$
with differentials for $n\geq 3$
\begin{align*}
d(l_1\otimes x_2\otimes \dots\otimes x_{n-1}\otimes l_n) &=  l_1x_2\otimes x_3\otimes  \dots x_{n-1}\otimes l_n \\
&+\sum_2^{n-2}(-1)^{i+1} l_1\otimes \dots\otimes x_ix_{i+1}\otimes \dots\otimes l_n \\
&+(-1)^n l_1\otimes x_2\otimes \dots \otimes x_{n-1}l_n
\end{align*}
and, for $n=2$, the product of the algebra is denoted by $d$ as before.

\end{lemm}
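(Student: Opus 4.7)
The plan is to exhibit an explicit $k$-linear contracting homotopy, proving that the augmented complex is acyclic. The cleft decomposition $\Lambda=C\oplus I$ of $C$-bimodules yields a $C$-bimodule projection $\sigma:\Lambda\to I$ along $C$. I will set $s_{-1}:\Lambda\to \Lambda\otimes_C\Lambda$, $s_{-1}(l)=1\otimes l$, and, for each $p\ge 0$,
$$s_p(l_1\otimes x_1\otimes\cdots\otimes x_p\otimes l_2)=1\otimes\sigma(l_1)\otimes x_1\otimes\cdots\otimes x_p\otimes l_2$$
as a $k$-linear map $\Lambda\otimes_C I^{\otimes_C p}\otimes_C\Lambda\to \Lambda\otimes_C I^{\otimes_C (p+1)}\otimes_C\Lambda$. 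Well-definedness over $\otimes_C$ uses only that $\sigma$ is right $C$-linear and that $\sigma(l)c\in I$ for every $c\in C$, which holds since $I$ is a right $C$-module.

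The key verification is $d s_p+s_{p-1} d=\mathrm{id}$ on each term. Writing $l_1=c+x$ with $c\in C$ and $x\in I$, the first summand of $d s_p$ is $x\otimes x_1\otimes\cdots\otimes x_p\otimes l_2$, while the first summand of $s_{p-1} d$ is $1\otimes\sigma(l_1x_1)\otimes x_2\otimes\cdots\otimes l_2$. Since $I$ is a two-sided ideal, $l_1x_1\in I$, hence $\sigma(l_1x_1)=l_1x_1=cx_1+xx_1$; by $C$-balance of the tensor product, $1\otimes cx_1=c\otimes x_1$. Combining these with the next summand of $ds_p$, which is $-1\otimes xx_1\otimes x_2\otimes\cdots$, the surviving contributions assemble into $(x+c)\otimes x_1\otimes\cdots\otimes l_2=l_1\otimes x_1\otimes\cdots\otimes l_2$. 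The remaining internal summands of $ds_p$ (those involving $x_ix_{i+1}$ for $i\ge 2$) and of $s_{p-1} d$ (those involving $x_ix_{i+1}$ for $i\ge 1$) are in sign-reversed correspondence under the index shift $i\mapsto i+1$, so they cancel pairwise; the same is true for the two right-end terms involving $x_pl_2$. The low-degree cases $ds_0+s_{-1}d=\mathrm{id}_{\Lambda\otimes_C\Lambda}$ and $ds_{-1}=\mathrm{id}_\Lambda$ are checked in the same spirit, the second one being immediate.

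Conceptually, this is exactly the contracting homotopy of the relative bar resolution of $\Lambda$ as an algebra over $C$, transported through the identification $\Lambda/C\cong I$ supplied by the cleft splitting. I do not anticipate a real obstacle: the only subtle points are the combined use of the ideal property of $I$ (ensuring $l_1x_1\in I$, so that $\sigma$ acts as the identity on it) and the $C$-balance of the tensor products to convert $1\otimes cy$ into $c\otimes y$. The rest is a routine sign-bookkeeping exercise analogous to the classical one for the bar resolution.
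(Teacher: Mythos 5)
Your proposal is correct and coincides with the paper's own argument: your projection $\sigma$ is exactly the paper's map $l\mapsto l_I$, and both proofs exhibit the identical contracting homotopy $s(l_1\otimes\cdots)=1\otimes\sigma(l_1)\otimes\cdots$ and verify $ds+sd=\mathrm{id}$ by combining $C$-balance of the tensor products with the fact that $I$ is a two-sided ideal (so $l_1x_1\in I$). The paper only writes out the degree-two verification; your description of the general pairwise cancellations has a minor off-by-one in which internal indices appear in $ds_p$ versus $s_{p-1}d$, but the structure of the computation and the conclusion are the same.
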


\begin{proof}
Let $l\in\Lambda$ and let $l=l_C+l_I$ be its decomposition in $C\oplus I$. Let $s$ be the map given as follows:
$$s( l_1\otimes x_2\otimes \dots\otimes x_{n-1}\otimes l_n) = 1\otimes \left(l_1\right)_I\otimes x_2\otimes \dots\otimes x_{n-1}\otimes l_n.$$
It is straightforward  to check that $s$ is well defined with respect to the tensor products over $C$. The verification that $s$ is a homotopy contraction is not completely trivial, we illustrate this by checking the property in degree two:
\begin{align*}
  ds(l\otimes x\otimes l')= &l_I\otimes x\otimes l' -1\otimes l_Ix\otimes l' + 1\otimes l_I\otimes xl',\\
  sd(l\otimes x \otimes l')= & 1\otimes \left(lx\right)_I\otimes l' - 1\otimes l_I\otimes xl'.
\end{align*}
Note that $(lx)_I = lx= l_Cx+l_Ix$. Hence
\begin{align*}
(ds+sd)(l\otimes x\otimes l')&=l_I\otimes x\otimes l' - 1 \otimes l_Ix\otimes l' + 1\otimes (lx)_I\otimes l'\\
&=l_I\otimes x\otimes l' - 1\otimes l_Ix\otimes l' + 1\otimes l_Cx\otimes l' +1\otimes l_Ix\otimes l'\\
&=l_I\otimes x\otimes l' + 1\otimes l_cx\otimes l'\\
&=l_I\otimes x\otimes l' + l_C\otimes x \otimes l'\\
&= (l_I+l_C)\otimes x \otimes l'\\
&= l\otimes x \otimes l'.
\end{align*}
\qed
\end{proof}
\begin{prop}\label{projresK}
Let $\Lambda=C\oplus I$ be a cleft extension algebra and suppose $I$ is a projective $C$-bimodule. The following is a resolution of $K_C^1(\Lambda)$ by projective $\Lambda$-bimodules:
$$\cdots \stackrel{d}{\longrightarrow}\Lambda\otimes_CI\otimes_CI\otimes_C\Lambda \stackrel{d}{\longrightarrow}\Lambda\otimes_CI\otimes_C\Lambda\stackrel{d}{\longrightarrow}K_C^1(\Lambda)\longrightarrow 0.$$
\end{prop}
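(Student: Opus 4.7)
The plan is to obtain the resolution essentially for free from the acyclic complex of the previous lemma, and then verify the projectivity of the terms. Observe that $K_C^1(\Lambda) = \ker(\Lambda\otimes_C\Lambda \stackrel{d}{\to}\Lambda)$ by definition, and the previous lemma exhibits an acyclic complex
$$\cdots\stackrel{d}{\to}\Lambda\otimes_C I\otimes_C I\otimes_C\Lambda \stackrel{d}{\to}\Lambda\otimes_C I\otimes_C\Lambda \stackrel{d}{\to}\Lambda\otimes_C\Lambda \stackrel{d}{\to}\Lambda\to 0.$$
Acyclicity at $\Lambda\otimes_C\Lambda$ states that the image of $\Lambda\otimes_C I\otimes_C\Lambda \to \Lambda\otimes_C\Lambda$ coincides with $K_C^1(\Lambda)$; acyclicity at the higher terms is inherited unchanged. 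Hence truncating at $K_C^1(\Lambda)$ gives exactly the claimed exact sequence, and nothing further needs to be checked for the exactness.

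The remaining task is to prove that every term $\Lambda\otimes_C I^{\otimes_C n}\otimes_C\Lambda$ is projective as a $\Lambda$-bimodule, and for this I would use two standard facts. First, for any $C$-bimodule $X$, the functor $\Lambda\otimes_C \place\otimes_C\Lambda$ sends a free $C$-bimodule $C\otimes V\otimes C$ to the free $\Lambda$-bimodule $\Lambda\otimes V\otimes\Lambda$, and it preserves direct sums; therefore it sends projective $C$-bimodules to projective $\Lambda$-bimodules. Second, the tensor product over $C$ of two projective $C$-bimodules is again a projective $C$-bimodule: if $X$ and $Y$ are summands of $C\otimes V\otimes C$ and $C\otimes W\otimes C$ respectively, then $X\otimes_C Y$ is a summand of $(C\otimes V\otimes C)\otimes_C(C\otimes W\otimes C)\cong C\otimes(V\otimes C\otimes W)\otimes C$, which is free. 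Iterating, the hypothesis that $I$ is a projective $C$-bimodule yields that $I^{\otimes_C n}$ is projective for every $n\geq 1$, and applying the first fact concludes.

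I do not expect genuine obstacles here: the heavy lifting of constructing the contracting homotopy was already carried out in the previous lemma. The only point requiring a little care is the transfer of projectivity along $\otimes_C$, which I would state explicitly in terms of summands of free bimodules to avoid any ambiguity. Once these two observations are recorded, the proposition follows by concatenating the exactness statement with the projectivity statement.
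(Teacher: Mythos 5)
Your proposal is correct and follows exactly the same route as the paper: truncate the acyclic complex from the previous lemma at $K_C^1(\Lambda)$, observe that the tensor product over $C$ of projective $C$-bimodules is projective, and that $\Lambda\otimes_C \place \otimes_C\Lambda$ carries projective $C$-bimodules to projective $\Lambda$-bimodules. You just spell out the "summand of free" reasoning a bit more explicitly than the paper does, which is fine.
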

\begin{proof}
The complex is acyclic by the previous result. We claim that if $P$ and $Q$ are projective $C$-bimodules, then $P\otimes_C Q$ is also a projective $C$-bimodule. Indeed $(C\otimes C)\otimes_C(C\otimes C)$ is a projective bimodule and the result follows. Consequently $I\otimes_C\cdots\otimes_C I$ is a projective $C$-bimodule. Moreover, if $P$ is a projective $C$-bimodule it is clear that $\Lambda\otimes_C P\otimes_C \Lambda$ is a projective $\Lambda$-bimodule.
\qed

Let $\Lambda$ be a $k$-algebra and $Z$ be a $\Lambda$-bimodule. We recall the following
$$H_0(\Lambda, Z)\ = \ \Lambda \otimes_{\Lambda\otimes \Lambda^{\mathsf{op}}} Z\ = \ \Lambda\otimes_{\Lambda-\Lambda} Z\ = \ Z/\langle\lambda z - z\lambda\rangle $$
where $\langle\lambda z - z\lambda\rangle$ is the vector subspace of $Z$ generated by the set $\{\lambda z - z\lambda\}$ for all $\lambda\in\Lambda$ and $z\in Z$.

\end{proof}
Let $\Lambda$ be an algebra and let $C$ be a subalgebra. Let $U$ be a $C$-bimodule and let $\Lambda\otimes_C U \otimes_C \Lambda$ be the induced $\Lambda$-bimodule. The next result gives a decomposition of the Hochschild homology in degree zero of a cleft algebra  $\Lambda=C\oplus I$  with coefficients in an induced bimodule. We provide a proof for further use.

\begin{prop}\label{h0induced}
Let $\Lambda=C\oplus I$ be a cleft algebra and let $U$ be a $C$-bimodule.
$$H_0(\Lambda,\ \Lambda\otimes_CU\otimes_C\Lambda) = H_0(C,U) \oplus H_0(C,\ I\otimes_CU)$$
\end{prop}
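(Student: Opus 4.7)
The plan is to reduce the statement to the standard "reduction by induction" formula
\[
H_0(\Lambda,\ \Lambda\otimes_C V\otimes_C \Lambda)\ \cong\ \Lambda\otimes_{C-C}V
\]
valid for any $C$-bimodule $V$, and then exploit the cleft $C$-bimodule decomposition $\Lambda=C\oplus I$ to split the right hand side into the two required summands.

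First I would establish the isomorphism above. I would define
\[
\varphi:\Lambda\otimes_C V\otimes_C\Lambda\longrightarrow \Lambda\otimes_{C-C}V,\qquad \varphi(a\otimes v\otimes b)=ba\otimes v,
\]
and verify that $\varphi$ kills the $\Lambda$-bimodule commutators $\lambda(a\otimes v\otimes b)-(a\otimes v\otimes b)\lambda$ (both sides map to $b\lambda a\otimes v$), so $\varphi$ factors through $H_0(\Lambda,\Lambda\otimes_C V\otimes_C\Lambda)$. The inverse $\psi(\ell\otimes v)=[1\otimes v\otimes\ell]$ is well defined because the relations $\ell c\otimes v=\ell\otimes cv$ and $c\ell\otimes v=\ell\otimes vc$ defining $\Lambda\otimes_{C-C}V$ both correspond to commutator relations in $H_0(\Lambda,-)$ applied to $1\otimes v\otimes\ell$. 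A direct check shows $\varphi\psi=\id$ and $\psi\varphi=\id$, using that $[a\otimes v\otimes b]=a\cdot[1\otimes v\otimes b]=[1\otimes v\otimes b]\cdot a=[1\otimes v\otimes ba]$ by cyclicity.

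Second, since $\Lambda=C\oplus I$ as a $C$-bimodule, the functor $-\otimes_{C-C}U$ yields
\[
\Lambda\otimes_{C-C}U\ =\ (C\otimes_{C-C}U)\ \oplus\ (I\otimes_{C-C}U).
\]
The first summand is $H_0(C,U)$ by definition. For the second summand, I would identify $I\otimes_{C-C}U$ with $H_0(C,I\otimes_C U)$ by a presentation comparison: both spaces are quotients of $I\otimes U$ imposing the relations $ic\otimes u=i\otimes cu$ (arising in $I\otimes_{C-C}U$ from the middle $C^{\op}$-factor and in $H_0(C,I\otimes_C U)$ from $\otimes_C$) together with $ci\otimes u=i\otimes uc$ (arising from the remaining $C$-factor and from the commutator quotient respectively). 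Concatenating the isomorphisms and setting $V=U$ yields the stated decomposition.

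The only delicate point will be keeping straight the two different $C$-bimodule structures on $I\otimes_C U$ involved in $H_0(C,I\otimes_C U)$ versus in $I\otimes_{C-C}U$, but this is purely a matter of matching generators and relations and should present no real obstacle; the first step, constructing the reduction-by-induction isomorphism, is the conceptual heart of the argument and will be reused in the next section when handling higher degrees.
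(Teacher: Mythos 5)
Your proposal is correct and amounts to the same computation as the paper's: composing your two isomorphisms $a\otimes u\otimes b\mapsto ba\otimes u$ and $\Lambda\otimes_{C-C}U=(C\otimes_{C-C}U)\oplus(I\otimes_{C-C}U)$ recovers exactly the paper's explicit map $a\otimes u\otimes b\mapsto (ba)_Cu + (ba)_I\otimes u$ and its inverse. You merely make the intermediate reduction-by-induction step $H_0(\Lambda,\Lambda\otimes_C U\otimes_C\Lambda)\cong\Lambda\otimes_{C-C}U$ explicit, which the paper telescopes into a single displayed pair of mutually inverse maps.
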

\begin{proof}
The mutual inverse isomorphisms are given by
$$\begin{array}{llll}
a\otimes u\otimes b&\mapsto\ \left(ba\right)_Cu\ &+\ &\left(ba\right)_I\otimes u,\\
u+ x\otimes v &\mapsto\ 1\otimes u \otimes 1\ &+\ &x\otimes v\otimes 1.
 \end{array}
$$
\qed
\end{proof}
We will use next the previous result for $U= I^{\otimes_{_C} n}$. Let
$$I(n) = H_0(C,  \ I^{\otimes_{_C} n}).$$
\begin{coro}
Let $\Lambda=C\oplus I$ be a cleft algebra. There is a  decomposition
$$H_0\left(\Lambda, \ \Lambda \otimes_C I^{\otimes_{_C} n} \otimes_C \Lambda\right) = I(n)\ \oplus \ I(n+1).$$
\end{coro}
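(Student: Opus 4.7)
The plan is to obtain this as an immediate specialization of Proposition \ref{h0induced}. Setting $U = I^{\otimes_{_C} n}$ in that proposition gives
$$H_0\!\left(\Lambda,\ \Lambda\otimes_C I^{\otimes_{_C} n}\otimes_C\Lambda\right) \;=\; H_0\!\left(C,\ I^{\otimes_{_C} n}\right) \;\oplus\; H_0\!\left(C,\ I\otimes_C I^{\otimes_{_C} n}\right).$$
The first summand is $I(n)$ by the very definition given just before the corollary. For the second summand, the canonical associativity isomorphism of the tensor product over $C$ identifies $I\otimes_C I^{\otimes_{_C} n}$ with $I^{\otimes_{_C}(n+1)}$ as $C$-bimodules, so applying $H_0(C,\place)$ yields $I(n+1)$.

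No further argument is needed: the mutual inverse isomorphisms exhibited explicitly in the proof of Proposition \ref{h0induced} are natural in the $C$-bimodule $U$, so they transport without modification to the present situation. There is no real obstacle here; the entire content of the statement has already been absorbed into the preceding proposition, and the corollary merely rewrites the two pieces of its output in terms of the shorthand $I(n)$.
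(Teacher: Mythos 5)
Your proposal is correct and is exactly the argument the paper intends: the corollary is stated immediately after the line ``We will use next the previous result for $U= I^{\otimes_{_C} n}$,'' so specializing Proposition \ref{h0induced} to this $U$ and identifying $I\otimes_C I^{\otimes_{_C} n}\cong I^{\otimes_{_C}(n+1)}$ is precisely what is meant.
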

\begin{prop}\label{torKcomplex}
Let $\Lambda = C\oplus I $ be a cleft algebra where $I$ is a projective $C$-bimodule. The vector spaces $\Tor_*^{\Lambda\!-\!\Lambda}(K_C^1(\Lambda),\Lambda)$ are the homology spaces of the complex
$$\cdots \stackrel{b}{\longrightarrow} I(n)\oplus I(n+1) \stackrel{b}{\longrightarrow}I(n-1)\oplus I(n)\stackrel{b}{\longrightarrow}\cdots \stackrel{b}{\longrightarrow} I(2)\oplus I(3) \stackrel{b}{\longrightarrow} I(1)\oplus I(2) \longrightarrow 0     $$
where
$$b: I(n)\oplus I(n+1) \to I(n-1)\oplus I(n)$$ is as follows:
\begin{itemize}
\item If  $z_1\otimes\dots\otimes z_n \in I(n)$, then
\begin{align*}
b(z_1\otimes\dots\otimes z_n) &=  z_1\otimes\dots\otimes z_n  \\
&+\sum_{1}^{n-1} (-1)^i z_1\otimes\dots \otimes z_iz_{i+1}\otimes \dots\ \otimes z_n  \\
&+ (-1)^nz_n\otimes z_1\otimes\dots\otimes z_{n-1}
\end{align*}
where the first and the last terms belong to $I(n)$ and the middle sum belongs to $I(n-1)$.
\item If $z_0\otimes\dots\otimes z_n \in I(n+1)$, then
\begin{align*}
b(z_0\otimes\dots\otimes z_n) &= z_0z_1\otimes\dots\otimes z_n  \\
&+\sum_{0}^{n-1} (-1)^i z_0\otimes\dots \otimes z_iz_{i+1}\otimes \dots\ \otimes z_n  \\
&+ (-1)^nz_n z_0\otimes\dots\otimes z_{n-1}
\end{align*}
which belongs to $I(n)$.
\end{itemize}
\end{prop}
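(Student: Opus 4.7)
The plan is to apply the functor $H_0(\Lambda,\place) = \Lambda\otimes_{\Lambda-\Lambda}\place$ to the projective $\Lambda$-bimodule resolution of $K^1_C(\Lambda)$ provided by Proposition \ref{projresK}; by definition of $\Tor$, the homology of the resulting complex is $\Tor^{\Lambda-\Lambda}_*(K^1_C(\Lambda),\Lambda)$. The corollary following Proposition \ref{h0induced} gives, degree by degree,
$$H_0\bigl(\Lambda,\ \Lambda\otimes_C I^{\otimes_{_C} n}\otimes_C\Lambda\bigr) = I(n)\oplus I(n+1),$$
so the underlying graded space of the resulting complex already matches the statement; only the identification of the differentials remains.

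To perform that identification, I would use the explicit inverse isomorphism of Proposition \ref{h0induced} to lift classes: an element $z_1\otimes\cdots\otimes z_n\in I(n)$ lifts to $1\otimes z_1\otimes\cdots\otimes z_n\otimes 1$, while $z_0\otimes\cdots\otimes z_n\in I(n+1)$ lifts to $z_0\otimes z_1\otimes\cdots\otimes z_n\otimes 1$. Applying the differential $d$ of Proposition \ref{projresK} term by term and then projecting each output via $a\otimes u\otimes b\mapsto (ba)_C u + (ba)_I\otimes u$ produces the two formulae announced. For the $I(n)$ lift, the middle contractions leave both outer factors equal to $1$, so $(1\cdot 1)_C=1$ and they contribute to the $I(n-1)$ summand; the first and last terms of $d$, on the other hand, move $z_1$ respectively $z_n$ into an outer slot, so that outer factor now lies in $I$ and $(ba)_I$ contributes to the $I(n)$ summand, producing $z_1\otimes z_2\otimes\cdots\otimes z_n$ together with the cyclic rotation $(-1)^n z_n\otimes z_1\otimes\cdots\otimes z_{n-1}$. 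For the $I(n+1)$ lift, the left outer factor $z_0$ already lies in $I$; since $I$ is a two-sided ideal of $\Lambda$, every product produced by $d$ (including the cyclic wrap-around $z_n z_0$) remains in $I$, so $(ba)_C=0$ throughout and every term lands in the $I(n)$ summand, as required.

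The content of the argument is thus the pair of routine projections above; the main obstacle is purely bookkeeping, namely tracking which of the two summands each term of $d$ projects onto, and matching the signs $(-1)^i$ and $(-1)^n$ with those of the differential in Proposition \ref{projresK}. I would fix the sign conventions by working out the small cases $n=2$ and $n=3$ by hand before writing the general formula, after which the remaining verification is mechanical.
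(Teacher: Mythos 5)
Your proposal is correct and takes exactly the same route as the paper: apply $H_0(\Lambda,\place)$ to the resolution of Proposition \ref{projresK} and translate the differential through the isomorphism of Proposition \ref{h0induced}. You simply spell out the lift-and-project bookkeeping that the paper leaves implicit, and the reasoning (including the observation that the outer factor $z_0$, $z_1$ or $z_n z_0$ landing in $I$ is what routes terms to the $I(n)$ summand) is sound.
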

\begin{proof}
The formulas are obtained by applying the functor $H_0(\Lambda, -)$ to the projective resolution of $K_C^1(\Lambda)$ of Proposition \ref{projresK}, and by translating the differentials to the present setting through the isomorphisms provided in Proposition \ref{h0induced}.
\qed
\end{proof}
\begin{lemm}\label{Iodd0}
Let $A$ and $B$ be $k$-algebras, let $C=A\times B$ and let $I$ be a $C$-bimodule of the form $I=M\oplus N$ where $M$ is a $B\!-\!A$-bimodule and $N$ is a $A\!-\!B$-bimodule. For $n$ odd, $I(n)=0$.
\end{lemm}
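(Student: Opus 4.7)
The plan is to exploit the Peirce decomposition induced by the product decomposition $C = A \times B$. Write $e_A = (1_A, 0)$ and $e_B = (0, 1_B)$ for the corresponding central orthogonal idempotents of $C$. Every $C$-bimodule $X$ then splits as
$$X = e_A X e_A \oplus e_A X e_B \oplus e_B X e_A \oplus e_B X e_B,$$
and in our setting $N = e_A I e_B$ while $M = e_B I e_A$, so $I$ is concentrated in the two off-diagonal Peirce pieces.

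The first step I would carry out is to observe that $H_0(C, -)$ only sees the Peirce diagonal. Concretely, for $x \in e_A X e_B$ one has $e_A x - x e_A = e_A x - 0 = x$ (using $e_B e_A = 0$), so $x$ is a boundary in $H_0(C, X)$; the analogous computation handles $e_B X e_A$. On the diagonal piece $e_A X e_A$, the left and right actions of $C$ factor through $A$, and similarly for $e_B X e_B$. Hence
$$H_0(C, X) \ = \ H_0(A,\, e_A X e_A)\ \oplus\ H_0(B,\, e_B X e_B).$$

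The second step is a straightforward combinatorial analysis of $I^{\otimes_C n}$. Because $e_A e_B = e_B e_A = 0$, the tensor products $M \otimes_C M$ and $N \otimes_C N$ vanish (the idempotent between consecutive factors cannot match on both sides). Consequently, only alternating words survive, giving
$$I^{\otimes_C n} \ = \ \bigl(M \otimes_A N \otimes_B M \otimes_A \cdots\bigr) \ \oplus \ \bigl(N \otimes_B M \otimes_A N \otimes_B \cdots\bigr)$$
(each factor of length $n$). When $n$ is odd, both words start and end with the same type of factor, so the first summand lies entirely in $e_B (I^{\otimes_C n}) e_A$ and the second in $e_A (I^{\otimes_C n}) e_B$. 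Both diagonal Peirce pieces are zero, and combining with the first step yields $I(n) = H_0(C, I^{\otimes_C n}) = 0$.

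There is no real obstacle here beyond careful bookkeeping of the idempotents $e_A, e_B$; once the Peirce decomposition is set up, both reductions are purely formal and the parity argument gives the conclusion immediately.
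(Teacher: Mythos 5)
Your proof is correct and takes essentially the same approach as the paper: both arguments reduce $I^{\otimes_C n}$ to the two alternating summands and observe that for $n$ odd each summand sits in an off-diagonal Peirce block ($e_B(\cdot)e_A$ or $e_A(\cdot)e_B$), which is killed by $H_0(C,-)$ because $e_A x - x e_A = \pm x$ there. You package the final step as the general identity $H_0(C,X) = H_0(A, e_A X e_A) \oplus H_0(B, e_B X e_B)$, whereas the paper applies the idempotent computation directly to the two summands, but the underlying mechanism is identical.
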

\begin{proof}
First we notice that $M\otimes_CM=0=N\otimes_C N$ since for instance $m \otimes m' = m(1_A,0) \otimes m' = m\otimes (1_A,0)m' = m\otimes 0 =0$.

Moreover $N\otimes_C M = N\otimes_B M$ and $M\otimes_C N = M\otimes_A N$.

Consequently
$$ I^{\otimes_{_C} n} = \left(\cdots \otimes_AN\otimes_BM\otimes_AN\otimes_BM\right)\ \oplus\ (\cdots \otimes_BM\otimes_AN\otimes_BM\otimes_AN)$$
with $n$ tensorands in each summand.
In particular for $n$ odd we have
$$ I^{\otimes_{_C} n} = \left(M\otimes_A\cdots \otimes_BM\otimes_AN\otimes_BM\right)\ \oplus\ (N\otimes_B\cdots \otimes_AN\otimes_BM\otimes_AN) $$
and we assert  that $H_0(C, \ I^{\otimes_{_C} n}) =0$. Indeed  $(1_A,0)x=0$ for every $x\in M$, while $x(1_A,0)=x$, and  $(0,1_B)y=0$, while $y(0,1_B)=y$ for every $y\in N$.
\qed
\end{proof}
\begin{lemm}
In the same situation as in the previous lemma, for $n=2m$,
$$ I^{\otimes_{_C} n} =\left(N\otimes_BM\right)\otimes_A\cdots \otimes_A\left(N\otimes_BM\right)\ \oplus\ (M\otimes_AN)\otimes_B\cdots \otimes_B(M\otimes_AN)$$
$$= \left(N\otimes_BM\right)^{\otimes_{_A}m}\ \oplus \ \left(M\otimes_AN\right)^{\otimes_{_B}m}.$$
\end{lemm}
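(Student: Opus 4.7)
The plan is to combine the distributivity of tensor product over direct sums with the vanishing and identification lemmas already in hand. Write $I^{\otimes_C n} = (M \oplus N)^{\otimes_C n}$ and expand it as a direct sum indexed by words $w = w_1 w_2 \cdots w_n \in \{M, N\}^n$, where each summand is $w_1 \otimes_C w_2 \otimes_C \cdots \otimes_C w_n$. By the computations already carried out in the proof of Lemma \ref{Iodd0}, we have $M \otimes_C M = 0$ and $N \otimes_C N = 0$, so any word $w$ containing two consecutive equal letters contributes a zero summand. Hence only alternating words survive.

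For $n = 2m$ there are exactly two alternating words of length $n$, namely $NMNM\cdots NM$ and $MNMN\cdots MN$, each of length $2m$. Using the other identifications from Lemma \ref{Iodd0}, namely $N \otimes_C M = N \otimes_B M$ and $M \otimes_C N = M \otimes_A N$, the first alternating word gives
\[
(N \otimes_B M) \otimes_A (N \otimes_B M) \otimes_A \cdots \otimes_A (N \otimes_B M) = (N \otimes_B M)^{\otimes_A m},
\]
after grouping tensorands in consecutive pairs and observing that between two such pairs the middle tensor product over $C$ reduces to a tensor product over $A$ (because the relevant end factors are $M$ on the left and $N$ on the right, and $M \otimes_C (\text{left $A$-action}) = M \otimes_A$). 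Symmetrically, the second alternating word yields $(M \otimes_A N)^{\otimes_B m}$.

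The main point to check carefully is this grouping step: when one passes from a length-$2m$ tensor product over $C$ along an alternating word to a length-$m$ tensor product of pairs over $A$ (or $B$), one must verify that $N \otimes_C M = N \otimes_B M$ (already done) and that the subsequent $M \otimes_C N = M \otimes_A N$ interfaces compatibly, so that the full iterated tensor collapses associatively to $(N \otimes_B M)^{\otimes_A m}$. This is a routine associativity argument using $C = A \times B$ and the orthogonality of the idempotents $(1_A, 0)$ and $(0, 1_B)$, exactly in the spirit of the computation in the previous lemma; no further obstacles arise, and combining the two alternating contributions yields the claimed decomposition.
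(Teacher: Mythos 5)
Your argument is correct and is essentially the same as what the paper does: it is exactly the even case of the decomposition already obtained in the proof of Lemma \ref{Iodd0}, where $M\otimes_C M = N\otimes_C N = 0$ kills all non-alternating words in the expansion of $(M\oplus N)^{\otimes_C n}$ and the identifications $N\otimes_C M = N\otimes_B M$, $M\otimes_C N = M\otimes_A N$ let the two surviving alternating words regroup into $(N\otimes_B M)^{\otimes_A m}$ and $(M\otimes_A N)^{\otimes_B m}$. The paper simply records this lemma without a separate proof block, treating it as immediate from the previous computation, and your write-up makes that step explicit.
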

\begin{coro}\label{even}
Let $A$ and $B$ be $k$-algebras, let $C=A\times B$ and let $I$ be a $C$-bimodule of the form $I=M\oplus N$ where $M$ is a $B\!-\!A$-bimodule and $N$ a $A\!-\!B$-bimodule. The following decomposition holds:
$$I(2m)= H_0\left( A, \left(N\otimes_BM\right)^{\otimes_{_A}m}\right) \ \oplus\ H_0\left( B, \left(M\otimes_AN\right)^{\otimes_{_B}m}\right).$$
\end{coro}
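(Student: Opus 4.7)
The plan is to combine the previous lemma with the additivity of $H_0(C, \place)$, and then reduce the action of $C = A \times B$ to the action of the appropriate factor on each summand.

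First I would record that $H_0(C, \place)$ commutes with finite direct sums, so by the preceding lemma
$$I(2m) \ = \ H_0\!\left(C,\ \left(N\otimes_BM\right)^{\otimes_{_A}m}\right)\ \oplus\ H_0\!\left(C,\ \left(M\otimes_AN\right)^{\otimes_{_B}m}\right).$$
Hence it suffices to prove
$$H_0\!\left(C,\ \left(N\otimes_BM\right)^{\otimes_{_A}m}\right)\ = \ H_0\!\left(A,\ \left(N\otimes_BM\right)^{\otimes_{_A}m}\right)$$
and the analogous identity for the second summand (with $B$ in place of $A$).

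Next I would verify that the $C$-bimodule $Z = \left(N\otimes_BM\right)^{\otimes_{_A}m}$ has its $C$-action factoring through the projection $C \to A$. Writing a pure tensor $n_1\otimes m_1\otimes\cdots\otimes n_m\otimes m_m$, the leftmost tensorand is $n_1 \in N$, on which the idempotent $(0,1_B)\in C$ acts as zero (using the identities recalled in the proof of Lemma \ref{Iodd0}), while $(1_A,0)$ acts as the identity; symmetrically on the right, the rightmost tensorand $m_m\in M$ is annihilated on the right by $(0,1_B)$ and fixed by $(1_A,0)$. Thus for $c = (a,b) \in C$ and $z \in Z$, both $cz$ and $zc$ depend only on $a$, so the subspace $\langle cz - zc : c\in C,\ z\in Z\rangle$ coincides with $\langle az - za : a \in A,\ z\in Z\rangle$; this gives exactly $H_0(C, Z) = H_0(A, Z)$. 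The symmetric argument, with the roles of $A$ and $B$ (and of $M$ and $N$) interchanged, handles the other summand.

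There is no real obstacle here: the content is essentially the bookkeeping already performed in Lemma \ref{Iodd0}, applied one dimension up. Putting the two identifications together with the direct-sum decomposition yields the claimed formula for $I(2m)$. \qed
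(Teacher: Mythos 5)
Your proof is correct, and it is precisely the argument the paper is implicitly invoking (the corollary is stated without a written proof, following immediately from the preceding lemma). The two ingredients — additivity of $H_0(C,\place)$ over the direct sum decomposition of $I^{\otimes_C 2m}$, and the observation that the $C$-bimodule action on $\left(N\otimes_BM\right)^{\otimes_A m}$ (resp. on $\left(M\otimes_AN\right)^{\otimes_B m}$) is the extension-by-zero of the $A$-bimodule action (resp. $B$-bimodule action), so that the commutator subspaces agree — are exactly the bookkeeping one expects, and your verification that the idempotents $(1_A,0)$ and $(0,1_B)$ act appropriately on the outer tensorands is sound.
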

\begin{defi}\label{cyclicaction}
Let $C_m=\langle t\mid t^m=1\rangle$ be a cyclic group of order $m$. The $kC_m$-module structures of $H_0\left( B, \left(M\otimes_AN\right)^{\otimes_{_B}m}\right)$ and $H_0\left( A, \left(N\otimes_BM\right)^{\otimes_{_A}m}\right)$ are given by the following action of $t$ by cyclic permutation:
$$t(x_m\otimes y_m\otimes \cdots x_2\otimes y_2\otimes x_1\otimes y_1) =  x_1\otimes y_1\otimes x_m\otimes y_m\otimes \cdots x_2\otimes y_2,$$
$$t(y_m\otimes x_m\otimes \cdots y_2\otimes x_2\otimes y_1\otimes x_1) =  y_1\otimes x_1\otimes y_m\otimes x_m\otimes \cdots y_2\otimes x_2.$$
\end{defi}
Note that the above actions are not well defined neither on $M\otimes_AN$ nor on  $N\otimes_BM$, on the other hand they are well defined on the $0$-degree homology of these bimodules.

We provide two isomorphisms between  these $kC_m$-modules that will be used in the proof of the next result:
$$
\begin{array}{rlll}
H_0\left( A, \left(N\otimes_BM\right)^{\otimes_{_A}m}\right) &\stackrel{\sigma}{\to} & H_0\left( B, \left(M\otimes_AN\right)^{\otimes_{_B}m}\right)   \\
  y_m\otimes x_m\otimes \cdots \otimes y_1\otimes x_1  & \mapsto & x_1\otimes y_m\otimes x_m\otimes \cdots \otimes y_1,
\end{array}
$$
$$
\begin{array}{rlll}
H_0\left( B, \left(M\otimes_AN\right)^{\otimes_{_B}m}\right) & \stackrel{\tau}{\to}& H_0\left( A, \left(N\otimes_BM\right)^{\otimes_{_A}m}\right)\\
 x_m\otimes y_m\otimes \cdots \otimes x_1\otimes y_1 & \mapsto & y_1\otimes x_m\otimes y_m\otimes\cdots \otimes x_1.
\end{array}
$$
Notice that the compositions $\sigma\tau$ and $\tau\sigma$ are  the  actions of $t$ on the corresponding vector spaces.

Finally we recall that for a group $G$ and a $kG$-module $H$, the invariants (or fixed points) of the action are $H^G=\{x\in H \mid  sx=x \mbox{ for all } s\in G\}.$ The coinvariants are  $H_G=H/\langle sx-x\rangle$ where $\langle sx-x\rangle$ is the vector subspace of $H$ generated by the elements of the form $sx-x$ for all $s\in G$ and $x\in H$. If $G$ is finite and the characteristic of the field does not divide its order, then $H_G$ and  $H^G$ are canonically isomorphic through the action of $\frac{1}{|G|}\sum_{s\in G}s$.
\begin{theo}
Let $\Lambda =  \left(
                                                                      \begin{array}{cc}
                                                                        A & N \\
                                                                        M & B\\
                                                                      \end{array}
                                                                    \right)$
be a null-square projective algebra, and let $I=M\oplus N$. For $m\geq 0$,
$$
\arraycolsep=0,3mm\def\arraystretch{2}
\begin{array}{lll}
\Tor^{\Lambda\!-\! \Lambda}_{2m+1}(K_C^1(\Lambda),\ \Lambda)&=&H_0\left( B, \left(M\otimes_AN\right)^{\otimes_{_B}m+1}\right)^{C_{m+1}} \mbox{\ \  and }
\\
\Tor^{\Lambda\!-\! \Lambda}_{2m}(K_C^1(\Lambda),\ \Lambda)&=
&H_0\left( B, \left(M\otimes_AN\right)^{\otimes_{_B}m+1}\right)_{C_{m+1}}.
\end{array}$$
\end{theo}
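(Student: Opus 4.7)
My plan is to start from the explicit complex of Proposition \ref{torKcomplex} and drastically simplify it using the null-square hypothesis, then compute the remaining kernels and cokernels in terms of the $C_{m+1}$-action of Definition \ref{cyclicaction}. First, by Lemma \ref{Iodd0}, $I(n)=0$ whenever $n$ is odd. Second, and crucially, since $\Lambda$ is null-square we have $I^2=0$ inside $\Lambda$, so every product of two consecutive tensorands appearing in the formulas for $b$ vanishes. Consequently the contribution of the $I(n+1)$-summand of $I(n)\oplus I(n+1)$ to $b$ is identically zero (all its terms are products $z_iz_{i+1}$ or $z_nz_0$), while the contribution of the $I(n)$-summand reduces to the endomorphism $1+(-1)^n c_n$ of $I(n)$, where $c_n$ denotes the cyclic shift $z_1\otimes\cdots\otimes z_n\mapsto z_n\otimes z_1\otimes\cdots\otimes z_{n-1}$.

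Combining these two observations, the complex of Proposition \ref{torKcomplex} decouples, for each $m\ge 0$, into an independent two-term sub-complex
$$I(2m+2)\xrightarrow{\,1+c_{2m+2}\,}I(2m+2)$$
sitting in homological degrees $2m+1$ and $2m$ respectively, while all differentials between distinct such blocks vanish. Therefore
$$\Tor^{\Lambda-\Lambda}_{2m+1}(K_C^1(\Lambda),\Lambda)=\ker\bigl(1+c_{2m+2}\bigr),$$
$$\Tor^{\Lambda-\Lambda}_{2m}(K_C^1(\Lambda),\Lambda)=\coker\bigl(1+c_{2m+2}\bigr).$$

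To identify these spaces, I invoke Corollary \ref{even} to write $I(2m+2)=V_1\oplus V_2$ with $V_1=H_0\bigl(A,\left(N\otimes_BM\right)^{\otimes_{_A}m+1}\bigr)$ and $V_2=H_0\bigl(B,\left(M\otimes_AN\right)^{\otimes_{_B}m+1}\bigr)$. A direct inspection of $c_{2m+2}$ on tensor monomials shows that it interchanges the two summands, with $c_{2m+2}|_{V_1}=\sigma$ and $c_{2m+2}|_{V_2}=\tau$ in the notation introduced right after Definition \ref{cyclicaction}. Using $\sigma$ to identify $V_1$ with $V:=V_2$, the operator $c_{2m+2}$ takes the block form $\left(\begin{smallmatrix}0 & t\\ 1 & 0\end{smallmatrix}\right)$ on $V\oplus V$, where $t$ is the generator of $C_{m+1}$, because $\sigma\tau=\tau\sigma=t$. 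A short matrix computation then gives
$$\ker(1+c_{2m+2})\cong\{(-v,v):v\in V^{C_{m+1}}\}\cong V^{C_{m+1}}$$
and, via the surjection $V\oplus V\to V/(1-t)V$ sending $(a,b)\mapsto a-b$ whose kernel is exactly the image of $1+c_{2m+2}$,
$$\coker(1+c_{2m+2})\cong V/(1-t)V=V_{C_{m+1}},$$
which are precisely the two formulas in the statement. The main bookkeeping challenge is tracking how $V_1$ and $V_2$ intertwine under the cyclic shift and the sign $(-1)^n$; once $I^2=0$ is used to kill the middle terms of $b$, the rest of the argument is essentially formal.
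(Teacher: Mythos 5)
Your proposal is correct and follows essentially the same route as the paper: both use Lemma \ref{Iodd0} and $I^2=0$ to collapse the complex of Proposition \ref{torKcomplex} into independent two-term blocks $I(2m+2)\xrightarrow{1+c}I(2m+2)$, then compute kernel and cokernel in terms of the $C_{m+1}$-action. The only (cosmetic) difference is that you identify the two summands $V_1\cong V_2$ via $\sigma$ first and work with the matrix $\bigl(\begin{smallmatrix}0&t\\1&0\end{smallmatrix}\bigr)$, whereas the paper computes directly with $\bigl(\begin{smallmatrix}1&\tau\\\sigma&1\end{smallmatrix}\bigr)$; both yield the same invariants and coinvariants.
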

\begin{proof}
We recall that for a null-square projective algebra $MN=0=NM$, hence $I^2=0$. Moreover  $I(n)=0$ for $n$ odd, by Lemma \ref{Iodd0}. Consequently the complex of  Proposition \ref{torKcomplex} reduces to
$$\cdots \stackrel{b}{\to} I(6)\stackrel{0}{\to} I(4)\stackrel{b}{\to} I(4)\stackrel{0}{\to} I(2)\stackrel{b}{\to} I(2)\to 0$$
where for $n=2m$
$$b(z_1\otimes\cdots\otimes z_n) =  z_1\otimes\cdots\otimes z_n\ +\  z_{n}\otimes z_1\otimes\cdots\otimes z_{n-1}.$$
Furthermore, the matrix of
$$
\begin{array}{lll}
b:&H_0\left( A, \left(N\otimes_BM\right)^{\otimes_{_A}m}\right) \ \oplus\ H_0\left( B, \left(M\otimes_AN\right)^{\otimes_{_B}m}\right)\\ &\longrightarrow H_0\left( A, \left(N\otimes_BM\right)^{\otimes_{_A}m}\right) \ \oplus\ H_0\left( B, \left(M\otimes_AN\right)^{\otimes_{_B}m}\right)
\end{array}$$
with respect to the decomposition of Proposition \ref{even}  is
$\left(
                                                                      \begin{array}{cc}
                                                                        1 & \tau \\
                                                                        \sigma & 1\\
                                                                      \end{array}
                                                                    \right).$
Moreover,
$$\begin{array}{ll}
\Ker b &=\{(u,v)\mid u+\tau (v) =0 = \sigma (u) + v\}\\
&=\{(u,-\sigma u)\mid u = \tau\sigma u\} \\
&=\{u\mid tu=u\}\\
&=H_0\left( A, \left(N\otimes_BM\right)^{\otimes_{_A}m}\right)^{C_m}.
 \end{array}$$
In order to compute $\Coker b$, note that $(u,v)=-(\tau v, \sigma u)$  holds in $\Coker b$.  Hence $(u,0)=(0,-\sigma u)=(\tau\sigma (u), 0)$. This shows that the map $$H_0\left( A, \left(N\otimes_BM\right)^{\otimes_{_A}m}\right)_{C_m}\to \Coker b$$ given by $u\mapsto (u,0)$ is well defined. Its inverse is given by $(u,v)\mapsto u-\tau(v)$. Hence $\Coker b= H_0\left( A, \left(N\otimes_BM\right)^{\otimes_{_A}m}\right)_{C_m}$.
\qed
\end{proof}
\sf
Towards describing the long exact sequence mentioned above, we consider now some tools of homological algebra to compute $\Tor_*^{\Lambda\!-\!\Lambda}(\Lambda\otimes_C\Lambda, \ \Lambda)$. The next  result will be used for a null-square projective algebra $\Lambda =  \left(
                                                                      \begin{array}{cc}
                                                                        A & N \\
                                                                        M & B\\
                                                                      \end{array}
                                                                    \right)$
and for the inclusion of algebras $C\otimes C^{\mathsf op} \subset\Lambda\otimes \Lambda^{\mathsf op}$, where $C=A\times B$.
\begin{lemm}\label{tor induced}
Let $F\subset D$ be an inclusion of $k$-algebras and suppose $D$ is projective as a left $F$-module. Let $U$ be a right $F$-module and let $U\!\!\uparrow^D=U\otimes_F D$ be the induced right module. Let $Z$ be a left $D$-module and let ${}_F\!\!\downarrow\!\! Z$ be the left $F$-module obtained by restricting the action to $F$. The following holds:
$$\Tor^D_*(U\!\!\uparrow^D,Z)=\Tor^F_*(U, {}_F\!\!\downarrow\!\! Z).$$
\end{lemm}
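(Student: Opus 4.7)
The plan is to prove this by transferring a projective resolution from $F$ to $D$ via the induction functor, and then using the associativity of tensor products.

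First, I would take a projective resolution $P_\bullet\to U$ of $U$ by right $F$-modules. The key observation is that the induction functor $-\otimes_F D$ from right $F$-modules to right $D$-modules has two useful properties under the hypothesis that $D$ is projective (in particular flat) as a left $F$-module: it is exact, and it sends projectives to projectives. Exactness is immediate from the flatness of ${}_F D$. To see that induction preserves projectives, note that $F\otimes_F D\cong D$ is a free right $D$-module, so free right $F$-modules induce to free right $D$-modules; taking direct summands (and direct sums) gives the result for arbitrary projective right $F$-modules. Consequently $P_\bullet\otimes_F D \to U\otimes_F D = U\!\!\uparrow^D$ is a projective resolution of $U\!\!\uparrow^D$ as a right $D$-module.

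Next I would compute $\Tor^D_*(U\!\!\uparrow^D,Z)$ as the homology of the complex $(P_\bullet\otimes_F D)\otimes_D Z$. By the associativity of the tensor product, there is a natural isomorphism of complexes
$$(P_\bullet\otimes_F D)\otimes_D Z \;\cong\; P_\bullet\otimes_F(D\otimes_D Z) \;\cong\; P_\bullet\otimes_F Z,$$
where in the last expression $Z$ is regarded as a left $F$-module by restriction, i.e.\ as ${}_F\!\!\downarrow\!\! Z$. The homology of $P_\bullet\otimes_F {}_F\!\!\downarrow\!\! Z$ is, by definition, $\Tor^F_*(U,{}_F\!\!\downarrow\!\! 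Z)$, which gives the desired equality.

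The potentially delicate step is the preservation of projectives under induction, but this is standard and follows directly from the adjunction between induction and restriction together with the fact that $F\otimes_F D\cong D$. No projectivity hypothesis on $U$, $Z$, or on $D$ as a right $F$-module is needed; only the left flatness (here left projectivity) of $D$ over $F$ is used, and it is used twice, implicitly: once to ensure exactness of induction, and once in identifying $D\otimes_D Z$ with $Z$ as $F$-modules via restriction, which is automatic. Thus the proof reduces to routine homological bookkeeping once the resolution $P_\bullet\otimes_F D$ is in hand.
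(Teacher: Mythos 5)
Your proof is correct, but it takes a genuinely different route from the paper. The paper keeps $Z$ as the homological variable: it observes that both $\Tor^D_*(U\!\!\uparrow^D,-)$ and $\Tor^F_*(U,{}_F\!\!\downarrow-)$ are homological $\delta$-functors in $Z$ that agree in degree $0$ and vanish in positive degrees on projective $D$-modules, so they coincide by the universal property of derived functors. Their proof needs precisely that restriction ${}_F\!\!\downarrow(-)$ sends projective $D$-modules to projective $F$-modules, which is where the hypothesis that ${}_FD$ is projective enters. You instead fix $Z$ and resolve $U$, pushing a projective right $F$-resolution $P_\bullet\to U$ through the induction functor $-\otimes_F D$, using that induction is exact (from ${}_FD$ flat) and preserves projectives (automatic, from $F\otimes_F D\cong D$ free), and then collapsing $(P_\bullet\otimes_F D)\otimes_D Z\cong P_\bullet\otimes_F Z$. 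The two proofs thus exploit the projectivity hypothesis in dual ways: the paper uses it to make restriction projective-preserving, you use it to make induction exact. One small bonus of your argument is that it visibly works under the weaker hypothesis that ${}_FD$ is merely flat, whereas the paper's axiomatic argument as stated genuinely needs projectivity. A minor expository point: you say the hypothesis is "used twice," but the second use you cite, identifying $D\otimes_D Z$ with $Z$, is a formal cancellation and does not invoke flatness at all, as you yourself note; the hypothesis is really used only once.
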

\begin{proof}
The left hand side functor in the variable $Z$  is characterised by its universal property :
\begin{itemize}
\item $\Tor^D_0(U\!\!\uparrow^D,Z)= U\!\!\uparrow^D\otimes_D Z = U\otimes_F Z$,
\item $\Tor^D_0(U\!\!\uparrow^D,Z)=0$ if $Z$ is projective,
\item A short exact sequence of of $D$-modules provides a long exact sequence.
\end{itemize}
It is clear that the right hand side functor in the variable $Z$ verifies the same properties. Note  that the second property is fulfilled precisely because we assume ${}_F\!\!\downarrow\!\! D$ is projective.
\qed
\end{proof}
\begin{lemm}
Let $\Lambda$ be a null-square projective algebra and let $C=A\times B$. The $C$-bimodule $\Lambda\otimes\Lambda$ is projective.
\end{lemm}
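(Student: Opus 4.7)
The plan is to use the Peirce decomposition of $\Lambda$ as a $C$-bimodule, $\Lambda = A \oplus B \oplus M \oplus N$, which induces a decomposition
\[
\Lambda \otimes \Lambda \;=\; \bigoplus_{X,Y\in\{A,B,M,N\}} X\otimes Y
\]
as $C$-bimodules, and then to check that each of the sixteen summands is a projective $C$-bimodule.

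For this, first I would record the structure of each piece with respect to the idempotents $e_A=(1,0)$ and $e_B=(0,1)$ of $C$: $A$ and $N$ are annihilated on the left by $e_B$, while $B$ and $M$ are annihilated on the left by $e_A$; similarly $A$ and $M$ are annihilated on the right by $e_B$, while $B$ and $N$ are annihilated on the right by $e_A$. Consequently each summand $X\otimes Y$ is a bimodule over one of $A$, $B$ on the left and one of $A$, $B$ on the right (for example $M\otimes N$ is a $B$-bimodule, $N\otimes M$ is an $A$-bimodule, $A\otimes M$ is an $A$-bimodule, and so on). Since $C = A\times B$, projectivity of $X\otimes Y$ as a $C$-bimodule is equivalent to projectivity as such a bimodule.

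Next I would invoke the elementary fact that if $P$ is a projective left $R$-module and $Q$ is a projective right $S$-module, then $P\otimes_k Q$ is a projective $R$-$S$-bimodule: indeed, $R\otimes_k S$ is the free $R$-$S$-bimodule of rank one, and direct summands behave well under $\otimes_k$. It then suffices to show, for each piece $X\in\{A,B,M,N\}$, that $X$ is projective as a one-sided module over its left-acting algebra and as a one-sided module over its right-acting algebra. For $A$ and $B$ this is trivial since they are free on themselves. For $M$, projectivity as a $B$-$A$-bimodule, together with the fact that $B\otimes A^{\mathsf{op}}$ is free as a module over $A^{\mathsf{op}}$ (via the second tensorand) and as a module over $B$ (via the first), gives by restriction that $M$ is projective as a right $A$-module and as a left $B$-module. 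The same argument applies to $N$.

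Combining these ingredients, each summand $X\otimes Y$ is a tensor product of a projective left module and a projective right module over the relevant factors of $C$, hence projective as the appropriate bimodule, hence projective as a $C$-bimodule. The direct sum $\Lambda\otimes\Lambda$ is therefore projective as a $C$-bimodule. The only mild obstacle is the bookkeeping of which factor of $C$ acts where in each of the sixteen summands; once that is correctly set up, the argument reduces to the free-tensor-free observation together with restriction of projectivity along the split inclusions $A\hookrightarrow B\otimes A^{\mathsf{op}}$ etc.
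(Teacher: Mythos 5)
Your proof is correct and uses essentially the same ingredients as the paper's: the observation that a projective bimodule restricts to a projective one-sided module, the fact that (projective left)\,$\otimes_k$\,(projective right) is a projective bimodule, and the identification of projectivity over a factor algebra with projectivity over $C$ via the product decomposition of $C\otimes C^{\mathsf{op}}$. The only difference is cosmetic: the paper decomposes $\Lambda\otimes\Lambda$ into four summands using $\Lambda = C\oplus I$ after first showing $I=M\oplus N$ is a projective $C$-bimodule, whereas you decompose into sixteen summands via $\Lambda = A\oplus B\oplus M\oplus N$; both routes reduce to the same free-tensor-free argument.
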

\begin{proof}
Note first that by hypothesis $M$ is a projective $B\!-\!A$-bimodule. It becomes a $C$-bimodule by extending the actions by zero, then $M$ is a projective $C$-bimodule. The same holds for $N$, then $I=M\oplus N$ is a projective $C$-bimodule.

Consider the $C$-bimodule decomposition
$$\Lambda\otimes \Lambda = (C\otimes C) \oplus (C\otimes I) \oplus (I\otimes C)\oplus (I\otimes I).$$

We assert that a projective $C$-bimodule is also projective as a left (or right) $C$-module. Indeed, the free rank-one $C$-bimodule $C\otimes C$ is free as a left (or right) $C$-module. This observation makes the proof of the assertion immediate. We infer that $I$ is projective as a left and as a right $C$-module.

We record  that if $P$ is a projective left $C$-module and $Q$ is a projective right $C$-module, the $C$-bimodule $P\otimes Q$ is a projective $C$-bimodule.

Consequently the four terms of the above direct sum decomposition of the $C$-bimodule $\Lambda\otimes\Lambda$ are projective $C$-bimodules.
\qed
\end{proof}
\begin{theo}
Let $\Lambda=\left(
                                                                      \begin{array}{cc}
                                                                        A & N \\
                                                                        M & B\\
                                                                      \end{array}
                                                                    \right)$
be a null-square projective algebra, and let $C=A\times B$. There is a decomposition
$$\Tor^{\Lambda\!-\!\Lambda}_*(\Lambda\otimes_C\Lambda, \ \Lambda)= HH_*(A) \oplus HH_*(B).$$
\end{theo}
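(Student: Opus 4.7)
The plan is to apply Lemma \ref{tor induced} to the inclusion of algebras $C\otimes C^{\mathsf{op}}\subset \Lambda\otimes \Lambda^{\mathsf{op}}$. The preceding lemma, which asserts that $\Lambda\otimes\Lambda$ is a projective $C$-bimodule, is equivalent to saying that $\Lambda\otimes \Lambda^{\mathsf{op}}$ is projective as a left $C\otimes C^{\mathsf{op}}$-module, so the projectivity hypothesis required by Lemma \ref{tor induced} is in force.

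The first step is to exhibit $\Lambda\otimes_C\Lambda$, viewed as a right $\Lambda\otimes\Lambda^{\mathsf{op}}$-module (that is, a $\Lambda$-bimodule), as the induced module $C\otimes_{C\otimes C^{\mathsf{op}}}(\Lambda\otimes\Lambda^{\mathsf{op}})$, where $C$ is regarded as a right $C\otimes C^{\mathsf{op}}$-module through its canonical $C$-bimodule structure. More generally, for any $C$-bimodule $V$ the induction $V\mapsto V\otimes_{C\otimes C^{\mathsf{op}}}(\Lambda\otimes\Lambda^{\mathsf{op}})$ identifies canonically with $V\mapsto \Lambda\otimes_C V\otimes_C\Lambda$, and specialising to $V=C$ yields $\Lambda\otimes_C\Lambda$. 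Lemma \ref{tor induced} then gives
$$\Tor^{\Lambda-\Lambda}_*(\Lambda\otimes_C\Lambda,\ \Lambda)\ =\ \Tor^{C-C}_*(C,\ \Lambda|_C)\ =\ HH_*(C,\,\Lambda|_C),$$
with $\Lambda|_C$ standing for $\Lambda$ regarded as a $C$-bimodule by restriction along the inclusion $C\hookrightarrow\Lambda$.

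The second step is to split this Hochschild homology using the product decomposition $C=A\times B$. The idempotents $e_A=(1_A,0)$ and $e_B=(0,1_B)$ yield the Peirce decomposition $\Lambda=A\oplus B\oplus M\oplus N$ as a $C$-bimodule, and the algebra $C\otimes C^{\mathsf{op}}$ itself splits into four blocks $A\otimes A^{\mathsf{op}}$, $A\otimes B^{\mathsf{op}}$, $B\otimes A^{\mathsf{op}}$ and $B\otimes B^{\mathsf{op}}$. The module $C=A\oplus B$ lives entirely in the two diagonal blocks, hence for any $C$-bimodule $Z$ one obtains
$$HH_*(C,Z)\ =\ HH_*(A,\,e_AZe_A)\ \oplus\ HH_*(B,\,e_BZe_B).$$
Applied to $Z=\Lambda|_C$, where $e_A\Lambda e_A=A$ and $e_B\Lambda e_B=B$ while the off-diagonal pieces $M$ and $N$ drop out, this gives exactly $HH_*(A)\oplus HH_*(B)$.

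The main delicate point is the bimodule identification in the first step: one must verify that the right $\Lambda\otimes\Lambda^{\mathsf{op}}$-action obtained by induction from $C$ coincides with the natural $\Lambda$-bimodule structure on $\Lambda\otimes_C\Lambda$, which is a matter of careful bookkeeping of the left/right conventions for the opposite algebra and presents no serious difficulty.
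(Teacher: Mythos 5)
Your proof is correct, and the first half coincides exactly with the paper's: you apply Lemma \ref{tor induced} to the inclusion $C\otimes C^{\mathsf{op}}\subset\Lambda\otimes\Lambda^{\mathsf{op}}$, justify the required projectivity by the preceding lemma, identify $\Lambda\otimes_C\Lambda$ with the module induced from $C$, and arrive at $H_*(C,\,\Lambda|_C)$. Where you diverge is in the second half. The paper decomposes $\Lambda|_C$ as the $C$-bimodule $C\oplus M\oplus N$, writes $H_*(C,\Lambda|_C)=HH_*(C)\oplus H_*(C,M)\oplus H_*(C,N)$, and then kills the last two summands and splits $HH_*(C)$ by hand: it takes a concrete projective $C$-bimodule resolution of $C$ of the form $P_\bullet\oplus Q_\bullet$ (with $P_\bullet\to A$ and $Q_\bullet\to B$) and checks directly that $M\otimes_{C-C}(P_\bullet\oplus Q_\bullet)$, $N\otimes_{C-C}(P_\bullet\oplus Q_\bullet)$, and the cross terms $A\otimes_{C-C}Q_\bullet$, $B\otimes_{C-C}P_\bullet$ all vanish. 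You instead invoke the general block decomposition of Hochschild homology over a product algebra: since $C\otimes C^{\mathsf{op}}$ splits as the product of its four Peirce blocks and the right $C\otimes C^{\mathsf{op}}$-module $C=A\oplus B$ is supported only on the two diagonal blocks, $H_*(C,Z)=H_*(A,e_AZe_A)\oplus H_*(B,e_BZe_B)$ holds for every $C$-bimodule $Z$, and specialising to $Z=\Lambda|_C$ gives $HH_*(A)\oplus HH_*(B)$ in one stroke. Your route is more conceptual and subsumes, in one application of a standard fact about Tor over product rings, both the paper's vanishing argument for $M$ and $N$ and its splitting of $HH_*(C)$; the paper's route is more self-contained and elementary, exhibiting the vanishing on an explicit resolution without appealing to the product decomposition of $C\otimes C^{\mathsf{op}}$.
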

\begin{proof}
We consider the inclusion $C\otimes C^{\mathsf op} \subset\Lambda\otimes \Lambda^{\mathsf op}$. Lemma \ref{tor induced} with $U=C$  provide the following:
$$\begin{array}{ll}
\Tor^{\Lambda\!-\!\Lambda}_*(\Lambda\otimes_C\Lambda, \ \Lambda)&=\Tor^{\Lambda\!-\!\Lambda}_*(\Lambda\otimes_C C\otimes_C\Lambda, \ \Lambda) \\
&= \Tor^{C-C}_*\left(C, \ {}_{C}\!\!\downarrow\!\!\Lambda\!\!\downarrow_C\right)\\
&= H_*(C, \ {}_{C}\!\!\downarrow\!\!\Lambda\!\!\downarrow_C) \\
&= HH_*(C)\oplus H_*(C,M)\oplus H_*(C,N)
\end{array}$$
We assert that $H_*(C,M)=H_*(C,N)=0$. Indeed, let $P_{\bullet} \to A$ be a projective resolution of the $A$-bimodule $A$, and analogously for $Q_\bullet \to B$. Note that $P_\bullet \oplus Q_\bullet \to A\oplus B$ is a projective resolution of the $C$-bimodule $C$, where the $C$-bimodule structure of $P_\bullet$ is obtained by extending the action to $B$ by zero, and analogously for $Q_\bullet$. The functor $M\otimes_{C\!-\!C}-$ applied to $P_\bullet \oplus Q_\bullet$ gives the zero complex by simple arguments already used in the proof of Lemma \ref{Iodd0} and $H_*(C,M)=0$. Analogously $H_*(C,N)=0$. Note that the assertion also follows from  \cite[p. 173]{CARTANEILENBERG}.

In order to prove $HH_*(C)=HH_*(A)\oplus HH_*(B)$, observe that the summands  $A\otimes_{C\!-\!C}Q_\bullet$ and $B\otimes_{C\!-\!C}P_\bullet$ of  $C\otimes_{C\!-\!C}(P_\bullet\oplus Q_\bullet)$  are zero for analogous reasons.
\qed
\end{proof}
The previous results and the exact sequence (\ref{theshort}) provides the following:
\begin{theo}\label{longexactsequence}
Let $\Lambda=\left(
                                                                      \begin{array}{cc}
                                                                        A & N \\
                                                                        M & B\\
                                                                      \end{array}
                                                                    \right)$
be a null-square projective algebra. There is a long exact sequence as follows:
$$
\arraycolsep=0,3mm\def\arraystretch{1,4}
\begin{array}{llrllllllll}
\dots\\
 H_0 \left(A, (N\otimes_BM)^{\otimes\!_{_A} m+1}\right)^{C_{m+1}} &\to &HH_{2m+1}(A)&\oplus &HH_{2m+1}(B)&\to &HH_{2m+1}(\Lambda)&\to\\
H_0 \left(A, (N\otimes_BM)^{\otimes\!_{_A} m+1}\right)_{C_{m+1}} &\to &HH_{2m}(A)&\oplus &HH_{2m}(B)&\to &HH_{2m}(\Lambda)&\to\\
\dots\\
 H_0 \left(A, (N\otimes_BM)^{\otimes\!_{_A} 3}\right)^{C_3} &\to &HH_5(A)&\oplus &HH_5(B)&\to &HH_5(\Lambda)&\to\\
H_0 \left(A, (N\otimes_BM)^{\otimes\!_{_A} 3}\right)_{C_3} &\to &HH_4(A)&\oplus &HH_4(B)&\to &HH_4(\Lambda)&\to\\
H_0 \left(A, (N\otimes_BM)^{\otimes\!_{_A} 2}\right)^{C_2}&\to &HH_3(A)&\oplus &HH_3(B)&\to &HH_3(\Lambda)&\to\\
H_0 \left(A, (N\otimes_BM)^{\otimes\!_{_A} 2}\right)_{C_2} &\to &HH_2(A)&\oplus &HH_2(B)&\to &HH_2(\Lambda)&\to\\
H_0 \left(A, (N\otimes_BM)\right) &\to &HH_1(A)&\oplus &HH_1(B)&\to &HH_1(\Lambda)&\to\\
H_0 \left(A, (N\otimes_BM)\right) &\to &HH_0(A)&\oplus &HH_0(B)&\to &HH_0(\Lambda)&\to 0.\\
\end{array}$$

\end{theo}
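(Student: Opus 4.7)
The plan is to assemble the long exact sequence as the $\Tor^{\Lambda\!-\!\Lambda}_*(\place,\Lambda)$ sequence attached to the short exact sequence of $\Lambda$-bimodules
\begin{equation*}
0\longrightarrow K^1_C(\Lambda)\longrightarrow \Lambda\otimes_C\Lambda\longrightarrow \Lambda\longrightarrow 0
\end{equation*}
from (\ref{theshort}), and then to identify the three families of terms with what has already been computed.

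First I would write out the resulting long exact sequence, which in degree $n$ reads
\begin{equation*}
\cdots\to \Tor^{\Lambda\!-\!\Lambda}_{n}(K^1_C(\Lambda),\Lambda)\to \Tor^{\Lambda\!-\!\Lambda}_{n}(\Lambda\otimes_C\Lambda,\Lambda)\to HH_n(\Lambda)\to \Tor^{\Lambda\!-\!\Lambda}_{n-1}(K^1_C(\Lambda),\Lambda)\to\cdots
\end{equation*}
The middle column is handled directly by the preceding theorem, which gives $\Tor^{\Lambda\!-\!\Lambda}_{*}(\Lambda\otimes_C\Lambda,\Lambda)=HH_*(A)\oplus HH_*(B)$, so it fits in the second and third columns of the claimed diagram.

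Next I would substitute the computation of $\Tor^{\Lambda\!-\!\Lambda}_{*}(K^1_C(\Lambda),\Lambda)$. By the theorem preceding Lemma \ref{tor induced},
\begin{equation*}
\Tor^{\Lambda\!-\!\Lambda}_{2m+1}(K^1_C(\Lambda),\Lambda)=H_0\!\left(B,(M\otimes_AN)^{\otimes_{_B}m+1}\right)^{C_{m+1}},
\end{equation*}
and similarly for the even degrees with coinvariants. To match the statement, which is phrased symmetrically on the $A$-side, I would invoke the $kC_{m+1}$-equivariant isomorphism
\begin{equation*}
\sigma:H_0\!\left(A,(N\otimes_BM)^{\otimes_{_A}m+1}\right)\stackrel{\cong}{\longrightarrow} H_0\!\left(B,(M\otimes_AN)^{\otimes_{_B}m+1}\right)
\end{equation*}
of Definition \ref{cyclicaction}, which intertwines the cyclic shifts; this gives the same result for invariants and coinvariants with $(N\otimes_BM)^{\otimes_{_A}m+1}$ in place of $(M\otimes_AN)^{\otimes_{_B}m+1}$.

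The only slightly delicate point is a bookkeeping one: after reindexing so that the term $\Tor^{\Lambda\!-\!\Lambda}_{n-1}(K^1_C(\Lambda),\Lambda)$ sits in the first column of the row $HH_n(\Lambda)$, one must check that the even row carries coinvariants and the odd row carries invariants, matching the statement. Reading off the parity: $\Tor_{2m}(K^1_C(\Lambda),\Lambda)$ is the coinvariant space, and this is exactly the term feeding into $HH_{2m+1}(\Lambda)\to\Tor_{2m}(K^1_C(\Lambda),\Lambda)\to HH_{2m}(A)\oplus HH_{2m}(B)$; symmetrically $\Tor_{2m+1}(K^1_C(\Lambda),\Lambda)$ is the invariant space, feeding $HH_{2m+2}(\Lambda)\to\Tor_{2m+1}(K^1_C(\Lambda),\Lambda)\to HH_{2m+1}(A)\oplus HH_{2m+1}(B)$. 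I expect no genuine obstacle beyond verifying this parity match and the vanishing $\Tor_{-1}=0$ that terminates the sequence with $HH_0(\Lambda)\to 0$.
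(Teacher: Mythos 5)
Your proposal is correct and follows exactly the argument the paper intends: the paper itself dispenses with a formal proof of this theorem, simply noting that it follows from the short exact sequence (\ref{theshort}), the computation $\Tor^{\Lambda\text{-}\Lambda}_*(\Lambda\otimes_C\Lambda,\Lambda)=HH_*(A)\oplus HH_*(B)$, the computation of $\Tor^{\Lambda\text{-}\Lambda}_*(K^1_C(\Lambda),\Lambda)$ in terms of invariants and coinvariants, and the $kC_m$-module isomorphism $\sigma$ switching the $B$-side description to the $A$-side one. Your added care in checking the degree shift and the invariant/coinvariant parity is exactly the bookkeeping the reader is implicitly expected to do.
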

\begin{coro}\label{invariantszzero}
Let $\Lambda=\left(
                                                                      \begin{array}{cc}
                                                                        A & N \\
                                                                        M & B\\
                                                                      \end{array}
                                                                    \right)$
be a null-square projective algebra. If $HH_n(\Lambda)=0$ for $n$ large enough, then $$H_0\left( A, \left(N\otimes_BM\right)^{\otimes_{_A}n}\right)_{C_n}=H_0\left( B, \left(M\otimes_AN\right)^{\otimes_{_B}n}\right)^{C_n}=0$$ for $n$ large enough.
\end{coro}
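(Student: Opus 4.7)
The plan is to read off the vanishing from the long exact sequence of Theorem~\ref{longexactsequence}, once we notice that $HH_*(A)\oplus HH_*(B)$ sits inside $HH_*(\Lambda)$ as a direct summand.

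The first step is to establish that the map $HH_*(C)\to HH_*(\Lambda)$ induced by the subalgebra inclusion $C=A\times B\hookrightarrow \Lambda$ is split injective. Since $\Lambda=C\oplus I$ with $I^2=0$, the canonical projection $\pi:\Lambda\to C$ is in fact an algebra morphism: given $c_1+i_1,c_2+i_2\in C\oplus I$, the product is $c_1c_2+(c_1i_2+i_1c_2)\in C\oplus I$ because the $I\otimes I$-term vanishes, so $\pi$ preserves products. As $\pi$ splits the inclusion $C\hookrightarrow\Lambda$, functoriality of $HH_*$ yields a splitting, hence injectivity.

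The second step is to consider, for each $m\geq 1$, the two consecutive pieces
\[
H_0\!\left(A,(N\otimes_BM)^{\otimes_{_A} m}\right)_{C_m}\xrightarrow{\alpha_m} HH_{2m-2}(A)\oplus HH_{2m-2}(B)\xrightarrow{\beta_m}HH_{2m-2}(\Lambda)
\]
and
\[
H_0\!\left(A,(N\otimes_BM)^{\otimes_{_A} m}\right)^{C_m}\xrightarrow{\alpha'_m} HH_{2m-1}(A)\oplus HH_{2m-1}(B)\xrightarrow{\beta'_m}HH_{2m-1}(\Lambda)
\]
of the long exact sequence. By Step~1, both $\beta_m$ and $\beta'_m$ are injective, so $\alpha_m=\alpha'_m=0$. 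Exactness at the leftmost entries therefore forces the connecting homomorphisms
\[
HH_{2m-1}(\Lambda)\twoheadrightarrow H_0\!\left(A,(N\otimes_BM)^{\otimes_{_A} m}\right)_{C_m},\quad
HH_{2m}(\Lambda)\twoheadrightarrow H_0\!\left(A,(N\otimes_BM)^{\otimes_{_A} m}\right)^{C_m}
\]
to be surjective.

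The third step is the conclusion. If $HH_n(\Lambda)=0$ for all $n\geq n_0$, the surjections above show that both $H_0(A,(N\otimes_BM)^{\otimes_{_A} n})_{C_n}$ and $H_0(A,(N\otimes_BM)^{\otimes_{_A} n})^{C_n}$ vanish as soon as $2n-1\geq n_0$. The isomorphism $\sigma$ of Definition~\ref{cyclicaction} is $C_n$-equivariant, identifying $H_0(A,(N\otimes_BM)^{\otimes_{_A} n})$ with $H_0(B,(M\otimes_AN)^{\otimes_{_B} n})$ as $kC_n$-modules, so the invariants $H_0(B,(M\otimes_AN)^{\otimes_{_B} n})^{C_n}$ also vanish, yielding the statement. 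There is no real obstacle in this argument; the entire content lies in the split injectivity of $HH_*(C)\hookrightarrow HH_*(\Lambda)$, which in turn is simply the observation that the projection of a cleft singular extension onto its subalgebra is an algebra map.
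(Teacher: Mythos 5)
Your Step~1 is exactly the paper's starting point: the projection $\Lambda\to C$ along $I$ is an algebra morphism splitting the inclusion $C\hookrightarrow\Lambda$, so $HH_*(C)$ is a direct summand of $HH_*(\Lambda)$. The equivariance of $\sigma$ in your Step~3 is also correct.

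The gap is in Step~2, where you assert that the long-exact-sequence maps $\beta_m,\beta'_m: HH_*(A)\oplus HH_*(B)\to HH_*(\Lambda)$ are injective ``by Step~1.'' These maps are, by construction, induced by the $\Lambda$-bimodule map $\Lambda\otimes_C\Lambda\to\Lambda$ in the $\Tor$ long exact sequence, composed with the chain of identifications $\Tor^{\Lambda-\Lambda}_*(\Lambda\otimes_C\Lambda,\Lambda)\cong H_*(C,{}_C\Lambda_C)\cong HH_*(C)$. They are \emph{a priori} not the same thing as the split-injective map $HH_*(C)\to HH_*(\Lambda)$ induced by the algebra inclusion, and you do not verify that the two coincide. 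That coincidence is true (it is what the paper implicitly uses in the Remark after Theorem~\ref{longexactsequence}, established there by a separate naturality argument), but it is not a consequence of functoriality of $HH_*$ alone, and your proof does not address it.

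The paper's proof of this corollary avoids the issue entirely, and you already have all the ingredients for that simpler route. From Step~1, $HH_n(\Lambda)=0$ for $n\geq n_0$ gives $HH_n(A)\oplus HH_n(B)=HH_n(C)=0$ for $n\geq n_0$. Then in the long exact sequence each term $H_0\bigl(A,(N\otimes_BM)^{\otimes_A m}\bigr)_{C_m}$ sits between $HH_{2m-1}(\Lambda)$ and $HH_{2m-2}(A)\oplus HH_{2m-2}(B)$, and each term $H_0\bigl(A,(N\otimes_BM)^{\otimes_A m}\bigr)^{C_m}$ sits between $HH_{2m}(\Lambda)$ and $HH_{2m-1}(A)\oplus HH_{2m-1}(B)$; for $m$ large both neighbours vanish, so exactness forces the middle term to vanish. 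No claim about the particular maps in the sequence is needed, only the vanishing of their source and target.
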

\begin{proof}
Hochschild homology is a functor from the category of algebras to the category of vector spaces. Let $\Lambda=C\oplus I$ where $C$ is a subalgebra of $\Lambda$ and $I$ is a two-sided ideal. In other words, there is an algebra surjection $\Lambda \to C$ which splits in the category of algebras, then  $HH_*(C)$ is a direct summand of $HH_*(\Lambda)$. Consequently, if $HH_n(\Lambda)=0$ for $n$ large enough, then the same holds for $HH_n(C)$. The long exact sequence of the previous theorem provides the result.
\qed
\end{proof}
\begin{rema}
The morphisms induced by the inclusion $K^1_C(\Lambda)\to \Lambda\otimes_C\Lambda$ of the short exact sequence (\ref{theshort}) are zero. Indeed, if $f:M\to M'$ and $g:N\to N'$ are $C$-bimodule morphisms,  we associate functorially a morphism between the corresponding short exact sequences (\ref{theshort}) for the corresponding algebras $\Lambda$ and $\Lambda'$. This induces in term a functorial morphism between the corresponding long exact sequences of  Theorem \ref{longexactsequence}. In particular, for $M'=N'=0$ we infer that the morphisms induced by the inclusion of (\ref{theshort}) factor through zero, hence they are zero.
Consequently there are short exact sequences as follows for $m> 0$:
\small
$$0\to HH_{2m}(A)\oplus HH_{2m}(B)\to HH_{2m}(\Lambda)\to H_0 \left(A, (N\otimes_BM)^{\otimes\!_{_A} m}\right)_{C_{m}}\to 0$$
$$0\to HH_{2m+1}(A)\oplus HH_{2m+1}(B)\to HH_{2m+1}(\Lambda)\to H_0 \left(A, (N\otimes_BM)^{\otimes\!_{_A} m+1}\right)_{C_{m+1}}\to 0.$$
\normalsize

For $m=0$ we obtain that $HH_0(A)\oplus HH_0(B)$  and $HH_0(\Lambda)$ are isomorphic; this can of course be verified by a direct computation.
\end{rema}

\section{\sf Han's conjecture for null-square projective algebras}\label{Han nullsquareprojective}\label{cuatro}

 Our first aim is to prove that if the algebras $A$ and $B$ are finite dimensional and basic, and if the invariants under the action of the cyclic groups $C_m$ on the spaces considered in Theorem \ref{longexactsequence} are zero, then the spaces themselves are zero.

Let $A$ and $B$ be finite dimensional and basic algebras. Let $E$ and $F$ be complete sets of primitive orthogonal idempotents  of $A$ and $B$ respectively. If $k$ is perfect, then
$$\rad \left(B\otimes A^{\mathsf{op}}\right)=B\otimes \rad A^{\mathsf{op}}+\rad B\otimes A^{\mathsf{op}}$$
and $\{g\otimes e\}_{(g,e)\in F\times E}$ is a complete set of primitive orthogonal idempotents of $B\otimes A^\mathsf{op}$. Consequently $$\{Bg\otimes eA\}_{(g,e)\in F\times E}$$ is a complete set of representatives, without repetitions, of the isomorphism classes of projective $B\!-\! A$-bimodules.
Let
\begin{equation}\label{M}
{}_BM_A= \bigoplus_{(g,e)\in F\times E}  {}_g m_e  \left(Bg\otimes eA\right)
\end{equation}
 be a projective finitely generated $B\!-\! A$ - bimodule, where by the Krull-Schmidt Theorem, the integers ${}_g m_e$ are uniquely determined by $M$. Similarly, let
\begin{equation}\label{N}
{}_AN_B= \bigoplus_{(f,h)\in E\times F}  {}_f n_h  \left(Af\otimes hB\right)
\end{equation}
be a finitely generated projective $A\!-\! B$ - bimodule.

The next definition is pictured in Figure \ref{NM quiver} on page \pageref{NM quiver}.
\begin{defi}
In the situation considered above, the \emph{$(N,M)$-quiver} is defined as follows: its vertices are $E\cup F$, where we agree to distribute $E$ in a first horizontal floor and $F$ in a ground floor.

There are two sort of arrows:
\begin{itemize}
\item Horizontal, distributed into:\\
- first floor ones, which provides the Peirce $E$-quiver of $A$ (see Definition \ref{Equiver}), and \\
- ground floor ones, namely the Peirce $F$-quiver of $B$.
\item Vertical, distributed into:\\
- down ones, there are ${}_g m_e$ arrows from $e$ to $g$ in one-to-one correspondence with the direct summands $Bg\otimes eA$ of $M$, and\\
- up ones, defined according to $N$ in the analogous way than for $M$.
\end{itemize}
\end{defi}
We agree to write the sequence of arrows of a path from right to left, as for composition of morphisms. Recall that the \emph{length} of a path is the length of the corresponding sequence, and that a \emph{cycle} is a path which starts and ends at the same  vertex. Next we define some particular kinds of paths in the $(N,M)$-quiver.
\begin{defi}\label{balanced}
   Let $\gamma$ be a path of the $(N,M)$-quiver,
\begin{itemize}
\item  $\gamma$ is \emph{balanced} if it does not contain two consecutive horizontal arrows.  In case $\gamma$ starts and ends at the same floor, its \emph{revolution number}  is half of the number of the vertical arrows of the sequence of  $\gamma$.
\item $\gamma$ is $E$-\emph{balanced} if it is balanced and it starts and ends at the first floor,  that is, at $E$-vertices. The set of $E$-balanced paths with revolution number $m$ is denoted by $\mathsf{P}^E_m$.
\item  $\gamma$ is an $E$-\emph{vertical balanced cycle} if it is an $E$-balanced cycle whose first arrow is down vertical. The set of $E$-vertical balanced cycles with revolution number $m$ is denoted by $\mathsf{CV}^E_m$.
\end{itemize}
\end{defi}

\begin{theo}\label{invariantszero}
Let $A$ and $B$ be basic finite dimensional algebras over a perfect field $k$, and let $M$ and $N$ be projective bimodules as above. Let $C_m$ be the cyclic group of order $m$ with generator $t$ acting by cyclic permutation  on $H_0 \left(A, (N\otimes_BM)^{\otimes_A m}\right)$ as given in Definition \ref{cyclicaction}.

If $H_0 \left(A, (N\otimes_BM)^{\otimes_A m}\right)^{C_m}=0$, then  $H_0 \left(A, (N\otimes_BM)^{\otimes_A m}\right)=0.$
\end{theo}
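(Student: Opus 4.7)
My strategy is to realize $V_m := H_0\!\left(A, (N\otimes_B M)^{\otimes_A m}\right)$ as a $kC_m$-\emph{permutation module} on an explicit combinatorial basis. Once this is accomplished, the theorem follows from a general principle: for a permutation action of a finite group $G$ on a basis $X$ of a vector space $V$, the invariants $V^G$ are spanned by the orbit sums $\sum_{x\in O}x$, each of which is non-zero as a sum of distinct basis vectors. Hence $V^G=0$ precisely when $X=\emptyset$, precisely when $V=0$.

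The first task is to construct the basis. Plugging the Krull--Schmidt decompositions \ref{M} and \ref{N} into $N\otimes_B M$ yields a direct sum of indecomposable $A$-bimodules of the shape $Af_\nu\otimes_k h_\nu Bg_\mu\otimes_k e_\mu A$. Iterating the $\otimes_A$ tensor product $m$ times and then applying $H_0(A,-)$, which cyclically identifies the outer factors $Af_{\nu_1}$ and $e_{\mu_m}A$ into the single space $e_{\mu_m}Af_{\nu_1}$, produces
$$V_m \cong \bigoplus_{(\nu_i,\mu_i)_{i=1}^m}\, \bigotimes_{i=1}^m \left(e_{\mu_{i-1}}Af_{\nu_i}\otimes_k h_{\nu_i}Bg_{\mu_i}\right),$$
with cyclic convention $\mu_0=\mu_m$. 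Fixing once and for all $k$-bases of every space $eAf$ (for $e,f\in E$) and every $hBg$ (for $h,g\in F$) and taking all tensor products of such basis elements exhibits a $k$-basis $X$ of $V_m$. An element of $X$ is precisely the data of an $E$-vertical balanced cycle of revolution $m$ in the $(N,M)$-quiver (Definition \ref{balanced}) decorated by a choice of basis vector on each horizontal segment. In particular $X$ is non-empty if and only if $V_m\neq 0$.

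The main obstacle, and the only delicate step, is to verify that the $C_m$-action of Definition \ref{cyclicaction} permutes $X$ without any sign or scalar twist. Because $t$ cyclically rotates the $NM$-pair tensor factors of the open form, in the closed-up decomposition above it sends the summand indexed by $(\nu_i,\mu_i)_i$ onto the summand indexed by the cyclic shift, and transports the basis vector sitting in each $eAf$ or $hBg$ slot to the same factor occupying the new position. This verification is essentially bookkeeping about how the cyclic rotation interacts with the reductions $\otimes_B$, $\otimes_A$, and $H_0(A,-)$; no conceptual difficulty appears, but one must be careful about multiplicities and index shifts. Once it is established that $V_m$ is a $kC_m$-permutation module on $X$, the principle recalled in the first paragraph yields the conclusion: $V_m^{C_m}=0$ forces $X=\emptyset$, hence $V_m=0$.
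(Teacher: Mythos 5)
Your proposal is correct and reaches the same destination as the paper, but it organizes the argument differently and, in one respect, more cleanly. The paper's proof decomposes $H_0\!\left(A,(N\otimes_B M)^{\otimes_A m}\right)$ as $\bigoplus_{\gamma\in\mathsf{CV}^E_m}V_\gamma$, with one summand per $E$-vertical balanced cycle $\gamma$ of revolution number $m$; it then picks $\gamma$, factors it as a power $\gamma=\underline\gamma^{\,l}$ of a primitive cycle, explicitly builds a nonzero element $u=\underline u^{\otimes l}\in V_\gamma$ that is fixed by the stabilizer $\langle t^{m/l}\rangle$ of $\gamma$, and finally checks that the orbit sum of $u$ is a nonzero invariant. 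You instead refine the decomposition one level further, down to an honest $k$-basis $X$ of decorated cycles on which $C_m$ acts by permutation, and then invoke the general principle that a permutation $kG$-module has nonzero invariants as soon as it is nonzero (orbit sums of basis vectors). This bypasses entirely the primitive-period bookkeeping and the ad hoc construction of a stabilizer-fixed vector: for a genuine permutation basis, the orbit sum of \emph{any} basis element is automatically $C_m$-fixed and nonzero, whatever the stabilizer of the underlying cycle happens to be. What you pay for this is the verification, which you rightly flag, that the action of $t$ from Definition \ref{cyclicaction} is a pure cyclic shift of tensor slots with no scalar twist after the $\otimes_B$, $\otimes_A$ and $H_0$ reductions; this does check out (the differential's signs never enter, since the even-degree cyclic action is sign-free), and once it is in place your argument is shorter and more robust than the paper's. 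In short: same decomposition by balanced cycles, same orbit-sum mechanism, but you push the combinatorics one step further to a permutation basis and thereby eliminate the stabilizer analysis that the paper carries out by hand.
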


\begin{proof}
We assert that $H_0 \left(A, (N\otimes_BM)^{\otimes_A m}\right)$ is a direct sum of vector spaces indexed by $\mathsf{CV}^E_m$.  In order to provide an outline of the evidence, let consider the particular case
$$N= (Af\otimes hB) \oplus (Af'\otimes h'B) \  \mbox{ and }\ M=(Bg\otimes eA) \oplus (Bg'\otimes e'A).$$
Notice that the $(M,N)$-quiver has two down arrows and two up arrows. Then

\begin{equation}\label{NM}
\begin{array}{rrcl}
N\otimes_B M= & (Af\otimes hBg \otimes eA)&\oplus &  (Af\otimes hBg' \otimes e'A)\ \ \oplus\\
                                & (Af'\otimes h'Bg \otimes eA) &\oplus & (Af'\otimes h'Bg' \otimes e'A)
\end{array}
\end{equation}

and
\begin{equation}\label{T1NM}
\begin{array}{lrlll}
H_0 \left(A, (N\otimes_BM)\right)=&(eAf\otimes hBg)&\oplus  &(e'Af\otimes hBg')\ \ \oplus\\
                                & (eAf'\otimes h'Bg) &\oplus & (e'Af'\otimes h'Bg').
\end{array}
\end{equation}
If the first summand is non zero, then $eAf\neq 0$ and $hBg\neq 0$ and by definition there are corresponding arrows in the $E$ and Peirce $F$-quivers, respectively from $f$ to $e$ and from $g$ to $h$.  We associate to this non zero summand the following $E$-vertical balanced cycle with revolution number $1$:
\begin{itemize}
\item[--] the first vertical down arrow from $e$ to $g$ corresponds to the  projective direct summand $Bg\otimes eA$ of $M$,
\item[--] the subsequent horizontal arrow at the ground floor from $g$ to $h$, due to  $hBg\neq 0$,
\item[--] next the vertical up arrow from $h$ to $f$ which corresponds to the projective bimodule $Af\otimes hB$,
\item[--] finally the horizontal arrow at the first floor from $f$ to $e$, due to $eAf\neq 0$.
\end{itemize}
The decomposition of $H_0\left(A, (N\otimes_B M)^{\otimes_A 2}\right)$ contains the following direct summand
\begin{equation}\label{T2NM}
\begin{array}{llll}
 (eAf'\otimes h'Bg' \otimes e'Af \otimes hBg).
\end{array}
\end{equation}
It corresponds to the $E$-vertical balanced cycle $\gamma$ with revolution number $2$, described by the following sequence of vertices (from right to left) and drawn below:
$$e,f',h',g',e',f,h,g,e$$

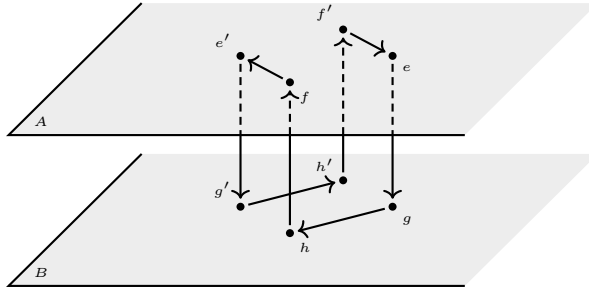
\begin{figure}[h]

\begin{center}
\begin{tikzpicture}[
        x={(-0.35cm,-0.35cm)},
        y={(1cm,0cm)},
        z={(0cm,1cm)},
        font=\tiny,
        ]
  \fill[black!7] (-4,0,0) -- (1,0,0) -- (1,6,0) -- (-4,6,0) -- cycle;
  \fill[black!7] (-4,0,2) -- (1,0,2) -- (1,6,2) -- (-4,6,2) -- cycle;
  \draw[thick] (-4,0,0) -- (1,0,0) -- (1,6,0);
  \draw[thick] (-4,0,2) -- (1,0,2) -- (1,6,2);
  \coordinate (g') at (-2,2,0);
  \coordinate (h') at (-3,3,0);
  \coordinate (h) at (-1,3,0);
  \coordinate (g) at (-2,4,0);
  \coordinate (e') at (-2,2,2);
  \coordinate (f') at (-3,3,2);
  \coordinate (f) at (-1,3,2);
  \coordinate (e) at (-2,4,2);
  \foreach \n in {h, g, e, f}
    {
    \fill (\n) circle(1.5pt);
    \node[below right] at (\n) {$\n$};
    }
  \foreach \n in {h', g', e', f'}
    {
    \fill (\n) circle(1.5pt);
    \node[above left] at (\n) {$\n$};
    }
  \begin{scope}[
        shorten <=3pt,
        shorten >=3pt,
        thick
        ]
  \foreach \a / \b in {g/h, f/e', g'/h', f'/e}
    \draw[->] (\a) -- (\b);
  \foreach \a / \b in {h/f, h'/f'}
    {
    \draw[shorten >=0pt] (\a) -- (\a |- 1,0,2);
    \draw[->, shorten <=0pt, densely dashed] (\a |- 1,0,2) -- (\b);
    }
  \foreach \a / \b in {e'/g', e/g}
    {
    \draw[shorten >=0pt, densely dashed] (\a) -- (\a |- 1,0,2);
    \draw[->, shorten <=0pt] (\a |- 1,0,2) -- (\b);
    }
  \node at (0.5,0.25,0) {$B$};
  \node at (0.5,0.25,2) {$A$};
  \end{scope}
\end{tikzpicture}
\end{center}
\caption{$(N,M)$\sf -quiver}\label{NM quiver}
\end{figure}

The direct summands of $H_0\left(A, (N\otimes_B M)^{\otimes_A 2}\right)$ are originated by the indecomposable direct summands of $M$ and $N$. The vertical arrows of the $E$-vertical balanced cycle  keep track of them. For instance  the vertical arrow from $e'$ to $g'$ corresponds to the projective direct summand $Bg'\otimes e'A$ of $M$.

Note  that the $E$-vertical balanced cycle drawn above is not the square of a vertical balanced cycle of revolution number $1$. On the other hand, $E$-vertical balanced cycles which are powers of shorter ones do exist.

Let $\gamma\in \mathsf{CV}^E_m$. We consider the non zero vector subspaces of $A$ and $B$ corresponding to the horizontal arrows of $\gamma$, which belong to the respective $E$ and Peirce $F$-quivers. Let  $V_\gamma$ be  their tensor product, obtained by following the order of the arrows of $\gamma$.

Conversely, as sketched above, a non zero  vector space direct summand of  $H_0\left(A, (N\otimes_B M)^{\otimes_A m}\right)$ determines an $E$-vertical balanced cycle of revolution number $m$. Then  $$H_0\left(A, (N\otimes_B M)^{\otimes_A m}\right)=\bigoplus_{\gamma\in \mathsf{CV}^E_m} V_\gamma.$$

We describe now the transported action of $C_m$ on $\mathsf{CV}^E_m$.  Let $\gamma$ be an $E$-vertical balanced cycle at a vertex $e$, of revolution number $m$. Let  $\gamma'$ be the $E$-balanced path obtained from $\gamma$ by  removing at its beginning the balanced oriented path $\alpha$ defined as follows: $\alpha$ is the first arrow of $\gamma$ followed by the next ones until reaching the source of the second vertical down arrow of $\gamma$. Note that $\alpha$ begins at $e$, it has revolution number $1$, and in general $\alpha$ is not  a cycle.  The target of $\gamma'$ is still $e$, and we have $t\cdot\gamma=\alpha\gamma'$.

We suppose now $H_0\left(A, (N\otimes_B M)^{\otimes_A m}\right)\neq 0$, that is $\mathsf{CV}^E_m\neq\emptyset$. Let $\gamma\in \mathsf{CV}^E_m$ and let $\underline{\gamma}$ be the $E$-vertical balanced cycle of smallest length such that $\gamma=(\underline{\gamma})^l$, in particular $\underline{\gamma}$ is not a power of a shorter $E$-vertical balanced cycle.

The stabilizer subgroup of $\gamma$ in $C_m$ is generated by $t^{\frac{m}{l}}$. That is  $t^{\frac{m}{l}}\cdot\gamma = \gamma$ and $\{t^i \gamma\}_{i=0,\dots, \frac{m}{l}-1}$ are distinct. Let $k[\mathsf{CV}^E_m]$ be the vector space with basis $\mathsf{CV}^E_m$. The trace element $\hat{\gamma} = \gamma +t\cdot\gamma+t^2\cdot\gamma+\cdots +t^{\frac{m}{l}-1}\cdot\gamma\in k[\mathsf{CV}^E_m]$   is a sum of different basis elements, hence $\hat{\gamma}\neq 0$. Moreover $t\cdot\hat{\gamma} = \hat{\gamma}$.  We will infer from $\underline{\gamma}$ a non zero  element of $H_0\left(A, (N\otimes_B M)^{\otimes_A m}\right)$.

 Let $\underline{u}$ be a non zero  element of $V_{\underline{\gamma}}$, and let $u=\underline{u}^{\otimes l}\in V_\gamma$.  This way $u\neq 0$ and  $t^{\frac{m}{l}}\cdot u=u$. Moreover $t^i\cdot u\in V_{t^i\gamma}$. Observe that the vector spaces $V_{t^i\cdot\gamma}$ are distinct for $i=0,\dots, \frac{m}{l}-1$ since the corresponding $E$-vertical balanced paths are  different. Consequently $\hat{v}=v+t\cdot v+\dots +t^{l-1}\cdot v\neq 0$. Moreover $t\cdot\hat{v}=\hat{v}$, then $H_0\left(A, (N\otimes_B M)^{\otimes_A m}\right)^{C_m}\neq 0$.
\qed
\end{proof}

\begin{theo}\label{NMNMNM=0}
Let $\Lambda=\left(
                                                                      \begin{array}{cc}
                                                                        A & N \\
                                                                        M & B\\
                                                                      \end{array}
                                                                    \right)$
be a null-square projective algebra where $A$ and $B$ are basic finite dimensional algebras over a perfect field $k$, and let $M$ and $N$ be  finitely generated projective bimodules, given as in (\ref{M}) and (\ref{N}).
If $HH_n(\Lambda)=0$ for $n$ large enough, then $H_0 \left(A, (N\otimes_BM)^{\otimes\!_{_A}n}\right)=0$ for all $n>0$ and $\left(N\otimes_B M\right)^{\otimes\!_{_A}n}=0$ for $n$ large enough.
\end{theo}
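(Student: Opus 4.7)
The plan is to chain Corollary~\ref{invariantszzero} with Theorem~\ref{invariantszero} to obtain vanishing of the $H_0$ spaces for $n$ large, promote this to every $n\geq 1$ using the cycle-indexed decomposition of $H_0$, and then deduce vanishing of the tensor powers themselves via a pigeonhole argument on the $(N,M)$-quiver.

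First, Corollary~\ref{invariantszzero} applied to the hypothesis yields $H_0(B,(M\otimes_A N)^{\otimes_B n})^{C_n}=0$ for $n$ large. The maps $\sigma$ and $\tau$ preceding Theorem~\ref{invariantszero} satisfy $\tau\sigma=t$ and $\sigma\tau=t$ on their respective targets; since $t$ acts as the identity on invariants, $\sigma$ and $\tau$ restrict to mutually inverse isomorphisms between the two invariant subspaces. Hence $H_0(A,(N\otimes_B M)^{\otimes_A n})^{C_n}=0$ for $n$ large, and Theorem~\ref{invariantszero} then yields $H_0(A,(N\otimes_B M)^{\otimes_A n})=0$ for $n$ large.

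Next, to promote this vanishing to every $n\geq 1$: by the decomposition in the proof of Theorem~\ref{invariantszero}, $H_0(A,(N\otimes_B M)^{\otimes_A n})=\bigoplus_{\gamma\in\mathsf{CV}^E_n}V_\gamma$ with each $V_\gamma\neq 0$, so the vanishing is equivalent to $\mathsf{CV}^E_n=\emptyset$. If some $\gamma\in\mathsf{CV}^E_m$ existed with $m\geq 1$, then the iterated concatenation $\gamma^k$ would again be an $E$-vertical balanced cycle of revolution $km$, for every $k\geq 1$: balancedness at each junction is preserved because $\gamma$'s last arrow is vertical up or horizontal in $E$ and $\gamma$'s first arrow is vertical down. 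Choosing $k$ with $km$ large would contradict $\mathsf{CV}^E_{km}=\emptyset$, so $\mathsf{CV}^E_n=\emptyset$ for every $n\geq 1$, which is the first conclusion.

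Finally, for the tensor powers, the decomposition inferred from (\ref{M}) and (\ref{N}) expresses $(N\otimes_B M)^{\otimes_A n}$ as a direct sum of bimodules of the form $Af_1\otimes h_1Bg_1\otimes e_1Af_2\otimes\cdots\otimes h_nBg_n\otimes e_nA$, and such a summand is non-zero precisely when $h_iBg_i\neq 0$ for every $i$ and $e_iAf_{i+1}\neq 0$ for every $i<n$. These data describe an $E$-balanced path of revolution $n$ in the $(N,M)$-quiver from $f_1$ to $e_n$, with the degenerate cases $g_i=h_i$ or $e_i=f_{i+1}$ corresponding to absent horizontal arrows. Once $n>|E|$, the sources $f_1,\dots,f_n\in E$ of the $n$ down arrows admit, by pigeonhole, a repetition $f_i=f_j$ with $i<j$; the sub-path between these two occurrences is then a cycle of revolution $j-i\geq 1$ starting with a vertical down arrow, hence lies in $\mathsf{CV}^E_{j-i}$, contradicting the previous step. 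Therefore $(N\otimes_B M)^{\otimes_A n}=0$ for $n>|E|$. The main obstacle is verifying that the sub-path extracted via pigeonhole really belongs to $\mathsf{CV}^E_{j-i}$ in the sense of Definition~\ref{balanced}: one must check that balancedness is inherited from the ambient path and that only genuine arrows of the $(N,M)$-quiver (not ``stopover'' positions where no arrow is traversed) constitute the extracted cycle.
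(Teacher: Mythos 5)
Your proof is correct and follows essentially the same route as the paper's: Corollary \ref{invariantszzero} followed by Theorem \ref{invariantszero} (together with the $\sigma/\tau$ identification of invariants) to kill the $H_0$ spaces for $n$ large, multiplicativity of $\mathsf{CV}^E$ under powers to extend this to all $n>0$, and emptiness of $\mathsf{CV}^E_n$ together with finiteness of the $(N,M)$-quiver to force $\mathsf{P}^E_n=\emptyset$ for $n$ large. The only visible difference is that you spell out the last step as an explicit pigeonhole on the $E$-vertices that are the sources of the down arrows (and check the extracted sub-path lies in $\mathsf{CV}^E_{j-i}$), which is precisely the correct unpacking of the paper's terser assertion that ``since the $(N,M)$-quiver is finite, the $E$-balanced paths have a maximal length.''
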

\begin{proof}
The hypothesis that Hochschild homology of $\Lambda$ vanishes in large enough degrees,  imply by Corollary \ref{invariantszzero} that $H_0\left(A, (N\otimes_B M)^{\otimes_A m}\right)^{C_m}=0$ for $m$ large enough. By the previous theorem \ref{invariantszero}, we infer $H_0\left(A, (N\otimes_B M)^{\otimes_A m}\right)=0$ for the same $m$'s, hence $\mathsf{CV}^E_m=\emptyset$ for $m$ large enough.

However if  $\mathsf{CV}^E_{n_0}\neq\emptyset$ for some $n_0$, then $\mathsf{CV}^E_{rn_0}\neq\emptyset$  for all $r>0$. Hence $\mathsf{CV}^E_n=\emptyset$ for all $n>0$. As a consequence $H_0\left(A, (N\otimes_B M)^{\otimes_A n}\right)=0$ for all $n>0$.

We assert  that in the same way as in the proof of Theorem \ref{invariantszero}, $\left(N\otimes_B M\right)^{\otimes_A m}$ is a direct sum of non zero  vector spaces which are in one-to-one correspondence with $\mathsf{P}^E_m$ (see Definition \ref{balanced}). For instance in the decomposition (\ref{NM}), the first summand $Af\otimes hBg \otimes eA$ corresponds to the $E$-balanced paths which contain the vertical down arrow from $e$ to $g$ and the vertical up arrow from $h$ to $f$. More precisely, there is a subsequent decomposition
 $$Af\otimes hBg \otimes eA = \bigoplus_{y,x\in E} yAf\otimes hBg \otimes eAx,$$
 and for each non zero  summand $yAf\otimes hBg \otimes eAx$, the $E$-balanced path is determined by the sequence of vertices $y,f,h,g,e,x.$

 In particular  $\left(N\otimes_B M\right)^{\otimes_A m}=0$ if and only if $\mathsf{P}_m^E=\emptyset$.

 We have  shown before that the $(N,M)$-quiver has no $E$-balanced vertical cycles. Since the $(N,M)$-quiver is finite, the $E$-balanced paths have a maximal length. Then $\mathsf{P}_n^E=\emptyset$ for $n$ large enough, and $\left(N\otimes_B M\right)^{\otimes\!_{_A}n}=0$ for the same set of $n$'s. \qed
\end{proof}

The long exact sequence of Theorem \ref{longexactsequence} provides then the following
\begin{coro}\label{iguales}
Let $\Lambda=\left(
                                                                      \begin{array}{cc}
                                                                        A & N \\
                                                                        M & B\\
                                                                      \end{array}
                                                                    \right)$
be a null-square projective algebra where $A$ and $B$ are basic finite dimensional algebras over a perfect field $k$, where $M$ and $N$ are finitely generated projective bimodules.
If $HH_n(\Lambda)=0$ for $n$ large enough, then for all $n$
$$HH_n(\Lambda)= HH_n(A)\oplus HH_n(B).$$

\end{coro}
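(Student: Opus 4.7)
The plan is to combine Theorem \ref{NMNMNM=0} with the long exact sequence of Theorem \ref{longexactsequence}, so that every ``correction term'' that could distinguish $HH_*(\Lambda)$ from $HH_*(A)\oplus HH_*(B)$ is forced to vanish in every degree.

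First I would invoke Theorem \ref{NMNMNM=0}: under the standing hypotheses and the assumption $HH_n(\Lambda)=0$ for large $n$, that theorem asserts the vanishing
\[
H_0\!\left(A,\,(N\otimes_B M)^{\otimes_A n}\right)=0\qquad\text{for all }n>0.
\]
Note that this is a uniform statement in $n$, not merely for large $n$; this is precisely the content extracted from the combinatorial analysis of $E$-vertical balanced cycles in the $(N,M)$-quiver, where the non-existence of such cycles of some single length $m$ forces their non-existence for every length.

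Next I would transfer this vanishing to the invariants and coinvariants occurring in the long exact sequence. Since $H^{C_{m+1}}\subset H$ and $H_{C_{m+1}}$ is a quotient of $H$, the vanishing of
$H=H_0(A,(N\otimes_B M)^{\otimes_A m+1})$ yields
\[
H_0\!\left(A,(N\otimes_B M)^{\otimes_A m+1}\right)^{C_{m+1}}
=H_0\!\left(A,(N\otimes_B M)^{\otimes_A m+1}\right)_{C_{m+1}}=0
\]
for every $m\ge 0$. Feeding these zeros into the long exact sequence of Theorem \ref{longexactsequence}, every connecting term flanking the maps
$HH_n(A)\oplus HH_n(B)\to HH_n(\Lambda)$ disappears. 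The six-term pieces of the long exact sequence collapse to short exact sequences
\[
0\to HH_n(A)\oplus HH_n(B)\to HH_n(\Lambda)\to 0
\]
for every $n\ge 1$; the last row of the long exact sequence handles $n=0$ (and can in any case be verified directly). This gives the desired isomorphism $HH_n(\Lambda)\cong HH_n(A)\oplus HH_n(B)$ for all $n$.

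The only non-trivial step is the input from Theorem \ref{NMNMNM=0}; once that combinatorial vanishing is at hand, the corollary is a formal consequence of the long exact sequence, with no further obstacle.
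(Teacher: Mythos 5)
Your proposal is correct and follows exactly the route the paper intends: apply Theorem \ref{NMNMNM=0} to obtain $H_0\bigl(A,(N\otimes_B M)^{\otimes_A n}\bigr)=0$ for all $n>0$, observe that invariants are a subspace and coinvariants a quotient so both vanish, and conclude from the long exact sequence of Theorem \ref{longexactsequence} that each map $HH_n(A)\oplus HH_n(B)\to HH_n(\Lambda)$ is an isomorphism. The paper itself gives no further argument, so your write-up simply makes explicit what the authors leave to the reader.
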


Our next aim is to provide a tool for bounding above the global dimension of a null-square projective algebra.
For this purpose we first briefly recall the \emph{mapping cone} construction. Let $(C_\bullet,c) = \{C_n\stackrel{c_n}{\to} C_{n-1}\}_{n\in \mathbb{Z}}$ and $(D_\bullet,d)=\{D_n\stackrel{d_n}{\to} D_{n-1}\}_{n\in \mathbb{Z}}$ be complexes with differentials $c$ and $d$. Let $f:C_\bullet \to D_\bullet$ be a map of complexes. Let $C_\bullet[1]$ be the complex defined by $C_n[1]= C_{n-1}$. There exists a  complex $(\mathsf{co}(f)_\bullet,e)$ called the mapping cone of $f$, and a short exact sequence of complexes
$$0\to C_\bullet[1]\to \mathsf{co}(f)_\bullet \to D_\bullet \to 0$$
such that the connecting homomorphism in the  long exact sequence of cohomology is the morphism induced by $f$. In particular, $f$ induces isomorphisms (\emph{i.e.} $f$ is a \emph{quasi-isomorphism}) if and only if the mapping cone complex is acyclic. Actually $\mathsf{co}(f)_n=C_n\oplus D_{n-1}$ with differential  $e=\left(
                                                                                                                                                                 \begin{array}{rr}
                                                                                                                                                                   c & f \\
                                                                                                                                                                   0& -d \\
                                                                                                                                                                 \end{array}
                                                                                                                                                               \right)$; note that the change of sign for $d$ guarantees $e^2=0$, since $fc=df$.

We simplify the tensor product notation as follows: let $U$ be a  $C\!-\! B$-bimodule and let $V$ be a $B\!-\!A$-bimodule, we will write $UV$ instead of $U\otimes_B V$ and $VU$ instead of $V\otimes_A U$.

\begin{theo}\label{null-squaremodulefgd}
Let $\Lambda=\left(
                                                                      \begin{array}{cc}
                                                                        A & N \\
                                                                        M & B\\
                                                                      \end{array}
                                                                    \right)$
be  a null-square  projective algebra where $A$ and $B$ are k-algebras, and $M$ and $N$ are $B\!-\! A$ and $A\!-\!B$-projective bimodules respectively.  Let $X$ be a left $A$-module and $P_\bullet \to X$ be a projective resolution.

Associated to $P_\bullet \to X$, there is a $\Lambda$-projective resolution $Q_\bullet \to (X\rightleftharpoons 0)$ such that if $P_\bullet \to X$ is finite and if $\left(N\otimes_B M\right)^{\otimes\!_{_A}n}=0$ for $n$ large enough, then $Q_\bullet \to (X\rightleftharpoons 0)$ is finite.
\end{theo}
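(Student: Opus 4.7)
The plan is to build $Q_\bullet$ as an iterated mapping cone (equivalently, the total complex of a finite bicomplex), following the strategy mentioned in the introduction. Define recursively an $A$-module family and a $B$-module family by $X^{(0)} := X$, $Y^{(n)} := M X^{(n)}$ and $X^{(n+1)} := N Y^{(n)} = (NM)^{\otimes_{_A} n+1} X$. By Lemma \ref{projectiveone} and its analogue for $B$, the induced $\Lambda$-modules $\Lambda e\otimes_A X^{(n)} = (X^{(n)}\underset{0}{\overset{1}{\rightleftharpoons}} Y^{(n)})$ and $\Lambda f\otimes_B Y^{(n)} = (X^{(n+1)}\underset{1}{\overset{0}{\rightleftharpoons}} Y^{(n)})$ are $\Lambda$-projective when the underlying $A$-/$B$-modules are.

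First I would establish the following exact sequence of (generally non-projective) $\Lambda$-modules:
\[
\cdots \stackrel{\phi_{n+1}}{\longrightarrow} \Lambda f\!\otimes_B\! Y^{(n)} \stackrel{\theta_n}{\longrightarrow} \Lambda e\!\otimes_A\! X^{(n)} \stackrel{\phi_n}{\longrightarrow} \Lambda f\!\otimes_B\! Y^{(n-1)} \longrightarrow \cdots \stackrel{\theta_0}{\longrightarrow} \Lambda e\!\otimes_A\! X \stackrel{\varepsilon}{\longrightarrow} (X\rightleftharpoons 0) \longrightarrow 0,
\]
where, componentwise, $\theta_n = (0,1_{Y^{(n)}})$, $\phi_n = (1_{X^{(n)}},0)$ and $\varepsilon = (1_X,0)$. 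Compatibility of these maps with the structure maps $\mu,\nu$ uses crucially the null-square identities $MN = NM = 0$. Exactness is checked directly: $\mathrm{im}(\theta_n) = (0 \rightleftharpoons Y^{(n)}) = \ker(\phi_n)$, while $\mathrm{im}(\phi_{n+1}) = (X^{(n+1)}\rightleftharpoons 0) = \ker(\theta_n)$, and $\ker(\varepsilon) = (0\rightleftharpoons Y^{(0)}) = \mathrm{im}(\theta_0)$.

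Next, upgrade this to a projective resolution. Choose the natural $A$-projective resolutions $P^{(n)}_\bullet := (NM)^{\otimes_{_A} n}\otimes_A P_\bullet \to X^{(n)}$ and $B$-projective resolutions $R^{(n)}_\bullet := M\otimes_A P^{(n)}_\bullet \to Y^{(n)}$. These are genuine resolutions because $(NM)^{\otimes_{_A} n}$ is a projective $A$-bimodule (as $NM$ decomposes into summands of the form $Af\otimes hBg\otimes eA$, and tensor products over $A$ of projective $A$-bimodules are projective), and $M$ is projective as a right $A$-module. Since $\Lambda e$ is projective as a right $A$-module, the functor $\Lambda e\otimes_A(-)$ preserves projectivity and exactness, so $\Lambda e\otimes_A P^{(n)}_\bullet$ is a projective $\Lambda$-resolution of $\Lambda e\otimes_A X^{(n)}$; likewise for $\Lambda f\otimes_B R^{(n)}_\bullet$. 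The morphisms $\theta_n$ and $\phi_n$ admit \emph{natural} chain-level lifts (again $(0,1)$ and $(1,0)$ on components), which commute strictly with horizontal differentials.

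Finally, define $Q_\bullet$ as the total complex of the resulting first-quadrant bicomplex whose rows are the $\Lambda e\otimes_A P^{(n)}_\bullet$ and $\Lambda f\otimes_B R^{(n)}_\bullet$ (alternating) and whose vertical arrows are the lifted $\theta_n,\phi_n$, with signs adjusted so that the total differential squares to zero. Each $Q_i$ is a finite direct sum of projective $\Lambda$-modules, hence projective. The spectral sequence of the bicomplex has $E^1$ equal to the exact sequence above (since each row is a resolution), so $E^2$ vanishes in positive degrees and equals $(X \rightleftharpoons 0)$ in degree zero; thus $Q_\bullet\to (X\rightleftharpoons 0)$ is a projective $\Lambda$-resolution. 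If $P_\bullet$ is finite and $(NM)^{\otimes_{_A} n} = 0$ for $n \geq n_0$, then $X^{(n)} = Y^{(n)} = 0$ and $P^{(n)}_\bullet = R^{(n)}_\bullet = 0$ for $n \geq n_0$, so the bicomplex has finitely many nonzero rows each of finite length, giving a finite $Q_\bullet$. The main technical obstacle is the bookkeeping: verifying that the natural lifts of $\theta_n,\phi_n$ assemble into an honest bicomplex with commuting/anticommuting squares, and that the spectral sequence computation really produces a resolution rather than just a quasi-isomorphism up to higher terms.
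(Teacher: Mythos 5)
Your proposal is essentially correct and constructs the same resolution $Q_\bullet$ as the paper -- a quick degree-by-degree comparison of your total complex with the paper's list of $Q_m$ confirms they agree -- but your route to exactness is organized differently. The paper never writes down an exact sequence of $\Lambda$-modules; instead it reads the $\Lambda$-module structure off the big diagram and proves exactness by grouping the \emph{columns} into an iterated direct sum: the central column is $P_\bullet\to X$, and each subsequent pair of columns is the mapping cone of the identity on $(M(NM)^iP_\bullet,\pm1\otimes p)$ or $((NM)^iP_\bullet,\pm1\otimes p)$, hence exact. You instead extract the \emph{rows}: you isolate an exact sequence of $\Lambda$-modules
\[
\cdots\to\Lambda f\otimes_B Y^{(n)}\to\Lambda e\otimes_A X^{(n)}\to\cdots\to\Lambda e\otimes_A X\to(X\rightleftharpoons 0)\to 0,
\]
resolve each term by the induced projective $\Lambda$-resolution, assemble a Cartan--Eilenberg bicomplex, and invoke the first-quadrant spectral sequence. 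Both arguments are valid and produce identical $Q_\bullet$; the paper's version is more concrete (the exactness is visible once the diagram is drawn, no spectral-sequence machinery), while yours is more conceptual and makes explicit which short exact sequence is being resolved. One cautionary remark: to justify that $P^{(n)}_\bullet=(NM)^{\otimes_A n}\otimes_A P_\bullet$ is a projective $A$-resolution of $X^{(n)}$ you appeal to a decomposition of $NM$ into summands $Af\otimes hBg\otimes eA$, but that presentation is only available in the basic finite-dimensional setting of Section~5, not under the hypotheses of this theorem (arbitrary $k$-algebras $A,B$). The general argument is simpler anyway: $N$ is a summand of $(A\otimes B)^{(I)}$ and $M$ a summand of $(B\otimes A)^{(J)}$, so $N\otimes_B M$ is a summand of $(A\otimes B\otimes A)^{(I\times J)}\cong(A\otimes A)\otimes_k B^{(I\times J)}$, a projective $A$-bimodule; hence all tensor powers $(NM)^{\otimes_A n}$ are projective $A$-bimodules, in particular flat and one-sided projective, which is exactly what you need.
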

\begin{proof}
We define the modules of $Q_\bullet$  as follows:
\vskip5mm
{\footnotesize
\hskip-1cm
\def\arraystretch{2.6}
$\begin{array}{lllll}
Q_0= \left(P_0 \underset{0}   {\overset{1}{\rightleftharpoons}} MP_0\right)\\
Q_1= \left(P_1 \underset{0}   {\overset{1}{\rightleftharpoons}} MP_1\right) \oplus \left(NMP_0 \underset{1}   {\overset{0}{\rightleftharpoons}} MP_0\right)\\
Q_2= \left(P_2 \underset{0}   {\overset{1}{\rightleftharpoons}} MP_2\right) \oplus \left(NMP_1 \underset{1}   {\overset{0}{\rightleftharpoons}} MP_1\right)\oplus \left(NMP_0 \underset{0}   {\overset{1}{\rightleftharpoons}} M(NM)P_0\right)\\
Q_3 = \left(P_3 \underset{0}   {\overset{1}{\rightleftharpoons}} MP_3\right) \oplus \left(NMP_2 \underset{1}   {\overset{0}{\rightleftharpoons}} MP_2\right)\oplus  \left(NMP_1 \underset{0}   {\overset{1}{\rightleftharpoons}} M(NM)P_1\right)\oplus
\left((NM)^2P_0 \underset{1}   {\overset{0}{\rightleftharpoons}} M(NM)P_0\right)\\
Q_4 = \left(P_4 \underset{0}   {\overset{1}{\rightleftharpoons}} MP_4\right) \oplus \left(NMP_3 \underset{1}   {\overset{0}{\rightleftharpoons}} MP_3\right)\oplus \left(NMP_2 \underset{0}   {\overset{1}{\rightleftharpoons}} M(NM)P_2\right) \oplus \\ \left((NM)^2P_1 \underset{1}   {\overset{0}{\rightleftharpoons}} M(NM)P_1\right)\oplus
\left((NM)^2P_0 \underset{0}   {\overset{1}{\rightleftharpoons}} M(NM)^2P_0\right)\\

\vdots\\
Q_{2m}=\left(P_{2m} \underset{0}   {\overset{1}{\rightleftharpoons}} MP_{2m}\right) \oplus \left(NMP_{2m-1} \underset{1}   {\overset{0}{\rightleftharpoons}} MP_{2m-1}\right)\oplus\cdots \oplus
\left((NM)^{m}P_0 \underset{0}   {\overset{1}{\rightleftharpoons}} M(NM)^{m}P_0\right)\\
Q_{2m+1}=\left(P_{2m+1} \underset{0}   {\overset{1}{\rightleftharpoons}} MP_{2m+1}\right) \oplus \left(NMP_{2m} \underset{1}   {\overset{0}{\rightleftharpoons}} MP_{2m}\right)\oplus\cdots \oplus 
\left((NM)^{m+1}P_0 \underset{1}   {\overset{0}{\rightleftharpoons}} M(NM)^{m}P_0\right)\\
\vdots\\
\end{array}$}

 We observe that the $Q_i$ are projective $\Lambda$-modules. Indeed, first we note that the   free rank one bimodule $B\otimes A$ is  projective as a left (or right module), hence any projective bimodule (for instance $M$) is projective as a left (or right module).  Consequently for any left $A$-module $X$, the left $B$-module $M\otimes_A X$ is projective. Finally, Lemma \ref{projectiveone} shows that each direct summand of $Q_i$ is a projective $\Lambda$-module.

 The differentials are defined in the figure below:

\begin{landscape}
\begin{tikzcd}[column sep=0.15]
\vdots & & \vdots & & \vdots & & \vdots & & \vdots  &  & \vdots & & \vdots & & \vdots & & \vdots & & \vdots \\
(NM)^2P_0 \ar[ddrr, swap, "1"] \ar[rrrrrrrrrrrrrrrrrr, rightharpoonup, bend left=8] & \oplus & (NM)^2P_1 \ar[dd, "-1\mbox{\tiny $\otimes$}p_1"]  & \oplus & NMP_2  \ar[dd, "1\mbox{\tiny $\otimes$}p_2"] \ar[rrrrrrrrrr, rightharpoonup, bend left=10] \ar[ddrr,  swap, "1"] & \oplus & NMP_3 \ar[dd, "-1\mbox{\tiny $\otimes$}p_3"] &\oplus & P_4 \ar[dd, "{p_4}"] &  \rightharpoonup  & MP_4 \ar[dd, "1\mbox{\tiny $\otimes$}p_4"] & \oplus &  MP_3  \ar[llllll, , rightharpoonup, bend left=20] \ar[dd,  "-1\mbox{\tiny $\otimes$}p_3"] \ar[ddll, "1"] & \oplus &   M(NM)P_2 \ar[dd,  "1\mbox{\tiny $\otimes$}p_2"]  & \oplus & M(NM)P_1 \ar[dd,  "-1\mbox{\tiny $\otimes$}p_1"] \ar[ddll, "1"] \ar[llllllllllllll, rightharpoonup, bend left=9] & \oplus & M(NM)^2P_0 \\
& & & & & & & &  &   &   &   &   & & & & & & \\
& & (NM)^2P_0  & \oplus & NMP_1  \ar[dd,  "1\mbox{\tiny $\otimes$}p_1"] \ar[rrrrrrrrrr, rightharpoonup, bend left=15] \ar[ddrr, swap, "1"] & \oplus & NMP_2 \ar[dd,  "-1\mbox{\tiny $\otimes$}p_2"] &\oplus & P_3 \ar[dd,  "{p_3}"] &  \rightharpoonup  & MP_3 \ar[dd,  "1\mbox{\tiny $\otimes$}p_3"] & \oplus &  MP_2  \ar[llllll, rightharpoonup, bend left=20] \ar[dd,  "-1\mbox{\tiny $\otimes$}p_2"] \ar[ddll, "1"] & \oplus &   M(NM)P_1 \ar[dd,  "1\mbox{\tiny $\otimes$}p_1"]  & \oplus & M(NM)P_0  \ar[ddll, "1"] \ar[llllllllllllll, rightharpoonup, bend left=10] & & \\
& & & & & & & &  &   &   &   &   & & & & & & \\
& & & & NMP_0 \ar[rrrrrrrrrr, rightharpoonup, bend left=10] \ar[ddrr, swap, "1"] & \oplus & NMP_1 \ar[dd,  "-1\mbox{\tiny $\otimes$}p_1"] &\oplus & P_2 \ar[dd,  "{p_2}"] &  \rightharpoonup  & MP_2 \ar[dd, "1\mbox{\tiny $\otimes$}p_2"] & \oplus &  MP_1  \ar[llllll, rightharpoonup, bend left=20] \ar[dd, "-1\mbox{\tiny $\otimes$}p_1"] \ar[ddll, "1"] & \oplus &   M(NM)P_0   & & & &  \\
& & & & & & & &  &   &   &   &   & & & & & & \\
& & & & & & NMP_0 & \oplus &  P_1 \ar[dd, "{p_1}"] & \rightharpoonup  &  MP_1  \ar[dd, "1\mbox{\tiny $\otimes$}p_1"] & \oplus &  MP_0  \ar[llllll, rightharpoonup, bend left=20] \ar[ddll, "1"]  &  & & & & &  \\
& & & & & & & &  &   &   &   &   & & & & & & \\
& & & &  & & & & P_0 \ar[d, "{p_0}"] & \rightharpoonup  &  MP_0 \ar[d, "1\mbox{\tiny $\otimes$}p_0"] &  & &  & & & & & \\
& & &  & & & & & X  \ar[d] &  \rightharpoonup  & 0 \ar[d]   &  & &  & & & & &  \\
& & & & & & & &  0 &   &  0 &   &   & & & & & &
\end{tikzcd}
\end{landscape}

It is immediate to check that the differentials are morphisms of $\Lambda$-modules,  that is, the corresponding squares commute (see Definition \ref{categoryS} and Proposition \ref{modulesandcategoryS}).

The  column with $X$ in the bottom is the projective resolution $P_\bullet \to X$. We observe that the two columns on its right give the mapping cone of the identity of the complex $(MP_\bullet, -1{\tiny\otimes}p)$. Since the identity is an isomorphism, the mapping cone is exact. Similarly, the next two columns on the right provide the mapping cone of the identity for the complex $(M(NM)P_\bullet, -1{\tiny\otimes}p)$, and so forth.

The two columns on the left of  $P_\bullet \to X$ correspond to the mapping cone of the identity of the complex $(NMP_\bullet, 1{\scriptsize\otimes}p)$. The next two columns on the left are the mapping cone of the identity of $\left((NM)^2P_\bullet, 1{\tiny\otimes}p\right)$, and so forth.

Consequently  $Q_\bullet$  is a resolution of  $X\rightleftharpoons 0$ by projective $\Lambda$-modules.

Let $r$ be an integer such that $(NM)^i=0$ for $i>r$. Moreover let $l$ be an integer such that $P_j=0$ for $j>l$. For a given $m$, the module $Q_{m}$ is the direct sum of vector spaces of the form $(NM)^iP_j$ for $2i+j=m$ or $2i+j=m+1$, and of vector spaces of the form $M(NM)^iP_j$ for $2i+j=m$ or $2i+j=m+1$. Let $m>2r+l$. In case $2i+j=m$ or $m+1$,  either $i>r$ or $j>l$ since otherwise $2i+j\leq 2r+l<m$. Hence $Q_m=0$ for all $m>2r+l$.
\qed
\end{proof}

\begin{theo}\label{smooth}
Let $k$ be a perfect field and let $\Lambda=\left(
                                                                      \begin{array}{cc}
                                                                        A & N \\
                                                                        M & B\\
                                                                      \end{array}
                                                                    \right)$
be  a finite dimensional null-square projective algebra where $A$ and $B$ are smooth.

If $(NM)^n=0$ for large enough $n$, then $\Lambda$ is smooth.
\end{theo}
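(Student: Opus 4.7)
The plan is to bound the global dimension of $\Lambda$ by producing finite projective resolutions of every simple $\Lambda$-module, then invoking the standard fact that for a finite dimensional algebra it suffices to control the projective dimensions of the simples.

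First I would recall, via Proposition \ref{simples}, that since $A$, $B$, $M$ and $N$ are finite dimensional (as $\Lambda$ is), every simple $\Lambda$-module has one of the two forms
$$S\underset{}{\overset{}{\rightleftharpoons}}0 \qquad \text{or}\qquad 0\underset{}{\overset{}{\rightleftharpoons}}T,$$
with $S$ a simple $A$-module and $T$ a simple $B$-module. Because $A$ is smooth and finite dimensional, $S$ admits a finite projective resolution $P_\bullet\to S$ over $A$. Combining this with the hypothesis $(NM)^n = N\otimes_B M\otimes_A\cdots\otimes_A N\otimes_B M =0$ for $n$ large enough, Theorem \ref{null-squaremodulefgd} applied to $X=S$ produces a finite projective resolution $Q_\bullet\to (S\rightleftarrows 0)$ by $\Lambda$-modules. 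In particular the projective dimension of $S\rightleftarrows 0$ is finite.

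For the simples of the second form, I would use the evident symmetric statement of Theorem \ref{null-squaremodulefgd} obtained by interchanging the roles of $A$ and $B$ and of $M$ and $N$: starting from a finite $B$-projective resolution of $T$ (which exists since $B$ is smooth) one builds a finite $\Lambda$-projective resolution of $0\rightleftarrows T$ provided that $(MN)^n =0$ for $n$ large enough. The key observation here is that the vanishing hypothesis is self-dual: since
$$(MN)^{n+1}\ =\ M\otimes_A (NM)^{\otimes_A n}\otimes_A N,$$
the assumption $(NM)^n=0$ for $n\gg 0$ immediately forces $(MN)^n=0$ for $n\gg 0$. Thus the symmetric form of Theorem \ref{null-squaremodulefgd} applies and gives a finite projective resolution of $0\rightleftarrows T$.

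Having bounded the projective dimensions of all simple $\Lambda$-modules uniformly, I conclude that $\Lambda$ has finite global dimension, hence is smooth. I do not anticipate a serious obstacle: the construction of Theorem \ref{null-squaremodulefgd} is entirely symmetric in $(A,M)$ and $(B,N)$, so the only point requiring verification is the equivalence of the two nilpotency hypotheses on $(NM)$ and $(MN)$, which is handled by the identity displayed above.
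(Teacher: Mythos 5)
Your proposal follows essentially the same route as the paper: classify the simple $\Lambda$-modules via Proposition \ref{simples}, apply Theorem \ref{null-squaremodulefgd} to $S\rightleftharpoons 0$ and its symmetric counterpart to $0\rightleftharpoons T$, and conclude that every simple has finite projective dimension. You make explicit the small point the paper leaves implicit — that $(NM)^n=0$ for $n\gg 0$ forces $(MN)^n=0$ for $n\gg 0$ via the identity $(MN)^{n+1}=M\otimes_A (NM)^{\otimes_A n}\otimes_A N$ — which is a welcome clarification but not a new idea.
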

\begin{proof}
The complete list of simple $\Lambda$-modules is
$ \{S \underset{}   {\overset{}{\rightleftharpoons}} 0 \}\bigcup \{0 \underset{}   {\overset{}{\rightleftharpoons}} T\}$ where $S$ and $T$ are simple modules over $A$ and $B$ respectively, see Proposition \ref{simples}. The previous theorem
shows that $S \underset{}   {\overset{}{\rightleftharpoons}} 0$ is of finite projective dimension. The analogous theorem holds for $\Lambda$-modules of the form $0 \underset{}   {\overset{}{\rightleftharpoons}} Y$ where $Y$ is a $B$-module.  Then the simple modules $0 \underset{}   {\overset{}{\rightleftharpoons}}T$ are also of finite projective dimension. \qed
\end{proof}

\begin{theo}
Let $k$ be a perfect field. Any finite dimensional null-square projective  $k$-algebra built on the class $\H$ of basic $k$-algebras verifying Han's conjecture also belongs to $\H$.
 \end{theo}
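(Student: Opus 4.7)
The plan is to combine the three main tools that have been developed in the section: Corollary \ref{iguales} (which forces Hochschild homology of $\Lambda$ to decompose as that of $A\times B$ once it vanishes in high degrees), Theorem \ref{NMNMNM=0} (which produces nilpotency of $N\otimes_B M$ from the vanishing assumption), and Theorem \ref{smooth} (which converts nilpotency plus smoothness of the diagonal factors into smoothness of $\Lambda$).

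More precisely, let $\Lambda=\left(\begin{smallmatrix} A & N \\ M & B\end{smallmatrix}\right)$ be a finite dimensional null-square projective $k$-algebra with $A$ and $B$ basic algebras in $\H$, and assume $HH_n(\Lambda)=0$ for $n$ large enough; we must show $\Lambda$ is smooth. First, by Corollary \ref{iguales}, the hypothesis gives $HH_n(\Lambda)=HH_n(A)\oplus HH_n(B)$ for all $n$, so in particular $HH_n(A)=0$ and $HH_n(B)=0$ for $n$ large. Since $A$ and $B$ are assumed to belong to $\H$, this means $A$ and $B$ are smooth.

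Second, still from the hypothesis, Theorem \ref{NMNMNM=0} applies (the perfect field assumption and the basic, finite dimensional hypotheses being precisely those required there), yielding $(N\otimes_B M)^{\otimes_A n}=0$ for $n$ large. Using the shorthand $NM=N\otimes_B M$ from just before Theorem \ref{null-squaremodulefgd}, this says $(NM)^n=0$ for large $n$.

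Finally, we plug these two conclusions into Theorem \ref{smooth}: $k$ is perfect, $\Lambda$ is a finite dimensional null-square projective algebra, $A$ and $B$ are smooth, and $(NM)^n=0$ for large $n$. Therefore $\Lambda$ is smooth, which is exactly the statement that $\Lambda\in\H$. There is no real obstacle to overcome here since all the substantive work has already been done: the proof is a one-line assembly of Corollary \ref{iguales}, Theorem \ref{NMNMNM=0}, and Theorem \ref{smooth}, and the only thing to be mildly careful about is to observe that the basic, finite dimensional, perfect-field hypotheses needed in these three results are automatically available from the hypotheses of the theorem.
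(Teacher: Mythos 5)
Your proof is correct and follows exactly the same route as the paper: apply Corollary \ref{iguales} to get smoothness of $A$ and $B$, apply Theorem \ref{NMNMNM=0} to get $(N\otimes_B M)^{\otimes_A n}=0$ for large $n$, and conclude via Theorem \ref{smooth}. The only cosmetic difference is that you cite Theorem \ref{smooth} by name where the paper writes ``the previous corollary,'' which is an apparent slip in the paper's text.
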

\begin{proof}
Let $\Lambda=\left(
                                                                      \begin{array}{cc}
                                                                        A & N \\
                                                                        M & B\\
                                                                      \end{array}
                                                                    \right)$,
                                                                    where $A$ and $B$ are finite dimensional basic $k$-algebras which belong to $\H$, and $M$ and $N$ are projective bimodules. Suppose $HH_n(\Lambda)=0$ for $n$ large enough. Then by Corollary \ref{iguales}, $HH_n(A)$ and $HH_n(B)$ vanish for the same set of $n$'s, hence $A$ and $B$ are smooth. Moreover, by Theorem \ref{NMNMNM=0} we have $(NM)^n=0$ for $n$ large enough. The previous corollary shows that $\Lambda$ is smooth. \qed
\end{proof}

\begin{rema} As much as in Remark \ref{agree}, we observe that according to Corollary \ref{iguales} this result agrees with the property proved by B. Keller in \cite[2.5]{KELLER}, namely the Hochschild homology of a finite dimensional smooth algebra over a perfect field is concentrated in degree zero.
\end{rema}

\section{\sf Gabriel quiver and relations of a null-square projective algebra}

Let $A$ be a finite dimensional algebra such that $A/\rad A$ is a product of copies of $k$, in other words $A$ is basic - equivalently, $A$ is Morita reduced - and sober - that is, the algebra of $A$-endomorphisms of each simple $A$-module is just $k$. Let $E$ be a complete system of primitive and orthogonal idempotents. The set of vertices of the \emph{Gabriel quiver} $Q_A$ is $E$; the number of arrows from $x$ to $y$ is the dimension of the vector space $y (\rad A / \rad^2 A)x$. It is well known that $Q_A$ is canonical, in the sense that $Q_A$ does not depend on the choice of $E$.

Let $Q$ be a quiver with finite set of vertices $Q_0$ and set of arrows $Q_1$. The vector space $kQ_0$ is endowed with a semisimple algebra structure where $Q_0$ is a complete set of primitive orthogonal idempotents. Note that $kQ_0$ is basic and sober. The vector space $kQ_1$ is a $kQ_0$-bimodule in the natural way. The \emph{path algebra} $kQ$ is by definition the tensor algebra $T_{kQ_0}(kQ_1)$, it has a canonical basis given by the oriented paths of $Q$. The universal property of $kQ$ is as follows: any algebra map $\varphi : kQ\to X$ is  determined by an algebra map $\varphi_{0}: kQ_0\to X$ - that is, a set map from $Q_0$ to a system of $X$ -, and $\varphi_1 : kQ_1\to X$, a $kQ_0$-bimodule map - the structure of $X$ as $kQ_0$-bimodule being  inferred from $\varphi_0$.

A finite dimensional algebra $A$ as above can be \emph{presented}, namely there exists a - non canonical - algebra surjection $kQ\to A$ such that its kernel $I$ is an admissible two-sided ideal,  that is, there exist a positive integer $m$ such that $F^m\subset I\subset F^2$, where $F$ is the two sided ideal generated by $(Q_A)_1$. Moreover, the ideal $I$ decomposes as $ \oplus_{x,y\in E}\  yIx$ since $(Q_A)_0$ is complete. The system of generators $R$ of $I$ considered in a presentation is \emph{adapted},  that is, $R$ is graded with respect to this  decomposition, its elements are called relations. Note that any system of generators $R'$ gives rise to a graded one, namely $R= \bigsqcup_{x,y \in E} yR'x$, where for a set of paths $Z$, we denote by $yZx$ the paths of $Z$ starting at $x$ and ending at $y$.

Let $\Lambda=\left(
               \begin{array}{cc}
                 A & N \\
                 M & B \\
               \end{array}
             \right)$
be a finite dimensional null-square projective algebra, where $A$ and $B$ are basic and sober, with respective presentations $(Q_A,R_A)$ and $(Q_B,R_B)$, and where the projective bimodules $M$ and $N$ are given as in (\ref{M}) and (\ref{N}).

\begin{lemm}
The Gabriel quiver of $\Lambda$ is the disjoint union of $Q_A$, $Q_B$ and new arrows as follows:
\begin{itemize}
\item ${}_g m_e $  arrows from $e\in E$ to $g\in F$, which we call \emph{down arrows},
\item ${}_f n_h $  arrows from $h\in F$ to $f\in E$, which we call \emph{up arrows}.
\end{itemize}
\end{lemm}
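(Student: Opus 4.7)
The plan is to check that $\Lambda$ is basic and sober (so that its Gabriel quiver is defined), identify $E\sqcup F$ as a complete system of primitive orthogonal idempotents, and then compute $\rad\Lambda/\rad^2\Lambda$ block by block with respect to this system.

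First I would verify that $E\sqcup F$, viewed in $\Lambda$ via the diagonal embeddings $A, B\hookrightarrow\Lambda$, is a complete system of primitive orthogonal idempotents. Completeness is immediate from $\sum_{e\in E}e+\sum_{g\in F}g=1_A+1_B=1_\Lambda$. Orthogonality follows from orthogonality in $A$ and $B$ together with the fact that $E$ and $F$ sit in different diagonal corners, which also kill each other. Primitivity of $e\in E$ inside $\Lambda$ follows from $e\Lambda e=eAe$, which is local because $A$ is sober and $e$ is primitive in $A$; similarly for $g\in F$. Moreover, Proposition \ref{simples} gives $\Lambda/\rad\Lambda\cong A/\rad A\times B/\rad B$, a product of copies of $k$, so $\Lambda$ is basic and sober and thus possesses a Gabriel quiver in the sense recalled above.

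Next I would square the radical using Proposition \ref{simples} and the null-square relations $MN=NM=0$. A direct matrix computation gives
$$\rad^2\Lambda=\begin{pmatrix}(\rad A)^2 & (\rad A)\,N+N\,(\rad B)\\ (\rad B)\,M+M\,(\rad A) & (\rad B)^2\end{pmatrix}.$$
Hence $\rad\Lambda/\rad^2\Lambda$ decomposes as a $(kE\times kF)$-bimodule into four blocks. The two diagonal blocks are $\rad A/(\rad A)^2$ and $\rad B/(\rad B)^2$, and their $(x,y)$-Peirce components count, by definition, the arrows of $Q_A$ and $Q_B$ respectively. The off-diagonal blocks are, with $\bar A=A/\rad A$ and $\bar B=B/\rad B$,
$$M\big/(\rad B)M+M(\rad A)\;\cong\;\bar B\otimes_B M\otimes_A\bar A,\qquad N\big/(\rad A)N+N(\rad B)\;\cong\;\bar A\otimes_A N\otimes_B\bar B.$$

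Finally I would read off the Peirce components of these off-diagonal blocks from the explicit decomposition (\ref{M}). Substituting $M=\bigoplus_{(g,e)\in F\times E}{}_g m_e\,(Bg\otimes eA)$ yields
$$\bar B\otimes_B M\otimes_A\bar A=\bigoplus_{(g,e)\in F\times E}{}_g m_e\,(\bar B g\otimes e\bar A).$$
Because $A$ and $B$ are basic and sober, $\bar A=\prod_{e\in E}ke$ and $\bar B=\prod_{g\in F}kg$, so the Peirce component of $\bar Bg\otimes e\bar A$ at a pair $(g',e')$ is $\delta_{g,g'}\delta_{e,e'}\,k$. Therefore the $(g,e)$-block of the lower-left off-diagonal part of $\rad\Lambda/\rad^2\Lambda$ has dimension exactly ${}_g m_e$, i.e.\ there are ${}_g m_e$ arrows from $e$ to $g$; these are the down arrows. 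The symmetric computation with $N$ gives ${}_f n_h$ up arrows from $h$ to $f$. No serious obstacle is anticipated; the only step that requires some care is the identification of the off-diagonal quotients with $\bar B\otimes_B M\otimes_A\bar A$ and $\bar A\otimes_A N\otimes_B\bar B$, which is a routine verification once $\rad^2\Lambda$ has been computed using $MN=NM=0$.
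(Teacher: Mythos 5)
Your proposal is correct and takes essentially the same approach as the paper: the paper's proof simply cites the description of $\rad\Lambda$ from Proposition \ref{simples} and declares the result immediate, whereas you spell out the computation of $\rad^2\Lambda$ (using $MN=NM=0$), identify the off-diagonal blocks of $\rad\Lambda/\rad^2\Lambda$ with $\bar B\otimes_B M\otimes_A\bar A$ and $\bar A\otimes_A N\otimes_B\bar B$, and read off the Peirce components from the explicit decompositions of $M$ and $N$ — all steps the paper leaves to the reader.
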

\begin{proof}
The description of the Jacobson radical of $\Lambda$  as given in the proof of Lemma \ref{simples} provides immediately the result. \qed
\end{proof}

Let $T_{A\times B}(M\oplus N)$ be the tensor algebra of the $A\times B$-bimodule $M\oplus N$, where, as already mentioned, the given actions are extended by zero in order to consider $M$ and $N$ as $A\times B$-bimodules. For instance we infer $M\otimes_{A\times B} M= N\otimes_{A\times B} N=0$.

The next two results are easy to prove, using both that $M$ and $N$ are projective bimodules, and the universal properties of the algebras involved.

\begin{lemm}\label{iso}
There is an algebra isomorphism $\varphi : T_{A\times B}(M\oplus N)\to kQ_{\Lambda}/\langle R_A, R_B\rangle$.
\end{lemm}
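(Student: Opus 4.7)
The plan is to construct $\varphi$ via the universal property of the tensor algebra, exhibit an explicit inverse using the universal property of the path algebra, and then check that the two maps are mutually inverse on generators.

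By the universal property of $T_{A\times B}(M\oplus N)$, the algebra map $\varphi$ is determined by an algebra map $\varphi_0 : A\times B \to kQ_\Lambda/\langle R_A,R_B\rangle$ together with an $A\times B$-bimodule map $\varphi_1 : M\oplus N \to kQ_\Lambda/\langle R_A,R_B\rangle$, the bimodule structure on the target being induced by $\varphi_0$. For $\varphi_0$, the inclusions $kQ_A\hookrightarrow kQ_\Lambda$ and $kQ_B\hookrightarrow kQ_\Lambda$ (which land in orthogonal corners of $kQ_\Lambda$) descend to the quotients since $R_A,R_B \subset \langle R_A,R_B\rangle$, producing the natural embedding $A\times B \hookrightarrow kQ_\Lambda/\langle R_A,R_B\rangle$. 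For $\varphi_1$, recall that $M=\bigoplus\, {}_g m_e\,(Bg\otimes eA)$, and by the previous lemma $Q_\Lambda$ has exactly ${}_g m_e$ down arrows from $e$ to $g$. Fix a bijection between the ${}_g m_e$ copies of $Bg\otimes eA$ and those down arrows, and send the generator $g\otimes e$ of each summand to its paired down arrow; since a down arrow from $e$ to $g$ lies in $g\,kQ_\Lambda\,e$, this is bimodule-compatible. Proceed analogously on $N$ using up arrows.

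For the inverse, I invoke the universal property of $kQ_\Lambda$: send each vertex in $E\cup F$ to the corresponding idempotent of $A\times B$, each arrow of $Q_A$ (resp.\ $Q_B$) to its canonical image in $A\subset T_{A\times B}(M\oplus N)$ (resp.\ $B$), and each new down or up arrow to the matching generator of $M$ (resp.\ $N$). This algebra morphism from $kQ_\Lambda$ factors through $\langle R_A,R_B\rangle$, since elements of $R_A$ vanish already in $A\subset T_{A\times B}(M\oplus N)$ and likewise for $R_B$, yielding a well-defined $\psi : kQ_\Lambda/\langle R_A,R_B\rangle \to T_{A\times B}(M\oplus N)$.

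That $\varphi\circ\psi$ and $\psi\circ\varphi$ are the identity follows by inspection on generators: vertices, arrows of $Q_A$ and $Q_B$, and new down/up arrows are fixed by construction, and both maps are algebra homomorphisms so agreement propagates. The only step deserving real care, which I view as the main obstacle, is verifying that $\varphi_1$ is well-defined as a bimodule map. This is precisely where projectivity of $M$ and $N$ is used: by Krull--Schmidt the integers ${}_g m_e$ and ${}_f n_h$ parametrising the indecomposable summands are uniquely determined, and by the preceding lemma they coincide with the numbers of new arrows in $Q_\Lambda$. Once this matching is fixed, the bimodule structure of each summand $Bg\otimes eA$ is freely presented on a single generator placed in the corner $g(\place)e$, and the image under $\varphi_1$ lies in exactly that corner of the target, so the assignment extends uniquely to a bimodule map.
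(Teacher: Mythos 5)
Your proof is correct and implements exactly the strategy the paper indicates without spelling out: the paper states only that the lemma is ``easy to prove, using both that $M$ and $N$ are projective bimodules, and the universal properties of the algebras involved,'' and your argument does precisely this via the universal properties of $T_{A\times B}(M\oplus N)$ and $kQ_\Lambda$ together with the parametrisation of the down/up arrows by the indecomposable projective summands $Bg\otimes eA$ and $Af\otimes hB$. One small wording caution: calling $\varphi_0$ ``the natural embedding $A\times B\hookrightarrow kQ_\Lambda/\langle R_A,R_B\rangle$'' asserts injectivity that is not immediate at that stage; it suffices to call it a map, since injectivity falls out once $\varphi$ and $\psi$ are shown to be mutually inverse.
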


\begin{lemm}
Let $\psi: T_{A\times B}(M\oplus N) \to \Lambda$ be the algebra map given by the inclusions of $A\times B$ and $M\oplus N$.
$$\Ker \psi = \langle (M\oplus N)^{\otimes_{(A\times B)}2} \rangle= \langle N\otimes_B M + M\otimes_A N\rangle.$$
\end{lemm}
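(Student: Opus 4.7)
The plan is to exploit the natural grading of the tensor algebra $T = T_{A\times B}(M\oplus N)$, where $T^0 = A\times B$, $T^1 = M\oplus N$, and $T^n = (M\oplus N)^{\otimes_{A\times B} n}$ for $n\geq 2$. The strategy is to show that $\psi$ is a linear isomorphism on $T^0\oplus T^1$ and vanishes identically on $T^{\geq 2}$, which forces $\Ker\psi$ to be precisely $T^{\geq 2}$, the two-sided ideal generated by $T^2$.

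First I would observe that, by the very definition of $\psi$ via the inclusions of $A\times B$ and $M\oplus N$, its restriction to $T^0\oplus T^1$ matches the $A\times B$-bimodule decomposition $\Lambda = (A\times B)\oplus M\oplus N$, hence is a linear isomorphism onto $\Lambda$ and is in particular injective. Next I would compute $T^2$ piece by piece. Because the $A\times B$-actions on $M$ and $N$ are extended by zero, the same idempotent argument used in Lemma \ref{Iodd0} gives $M\otimes_{A\times B} M = 0 = N\otimes_{A\times B} N$; the mixed pieces reduce to $N\otimes_{A\times B} M = N\otimes_B M$ and $M\otimes_{A\times B} N = M\otimes_A N$ by absorbing the central idempotents $(1_A,0)$ and $(0,1_B)$ across the tensor. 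This establishes the equality $\langle T^2\rangle = \langle N\otimes_B M + M\otimes_A N\rangle$.

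The crucial step is the vanishing of $\psi$ on $T^{\geq 2}$. Since $T^{\geq 2}$ is the two-sided ideal $\langle T^2\rangle$, it suffices to verify $\psi(T^2) = 0$: for $n\otimes m\in N\otimes_B M$ one has $\psi(n\otimes m) = nm = 0$ in $\Lambda$ by the null-square condition $NM = 0$, and symmetrically $\psi(m\otimes n) = mn = 0$ for $m\otimes n\in M\otimes_A N$. Combining the three ingredients, the decomposition $T = (T^0\oplus T^1)\oplus T^{\geq 2}$ together with $\psi$ bijective on the first summand and zero on the second yields $\Ker\psi = T^{\geq 2} = \langle N\otimes_B M + M\otimes_A N\rangle$. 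I do not expect any genuine obstacle: the whole argument is bookkeeping with the tensor algebra grading, the two vanishing corner products $MN=NM=0$, and the identification of tensor products over $A\times B$ with those over $A$ or $B$.
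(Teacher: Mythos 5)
Your proof is correct and is the natural bookkeeping argument via the tensor-algebra grading; the paper omits the details entirely ("easy to prove"), so there is nothing to compare beyond noting that your argument makes explicit exactly the three ingredients one would expect: the linear bijection $T^0\oplus T^1 \to (A\times B)\oplus M\oplus N = \Lambda$, the idempotent absorption identifying $T^2$ with $N\otimes_B M\oplus M\otimes_A N$, and the null-square relations $MN = NM = 0$ killing $T^2$ (hence the ideal $T^{\geq 2}$) under the algebra map $\psi$. One small remark: you never actually invoke projectivity of $M$ and $N$, which the paper loosely cites for these lemmas; projectivity is genuinely needed in the companion Lemma \ref{iso} to match $T_{A\times B}(M\oplus N)$ with the path algebra modulo $\langle R_A,R_B\rangle$, but your argument shows it plays no role in computing $\Ker\psi$.
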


\vskip2mm
 The set of all oriented paths of $Q_A$ generates the vector space $kQ_A/\langle R_A\rangle $, hence we can choose a subset $P_A$ which is a basis of $kQ_A/\langle R_A\rangle $. Let also  $P_B$ be a basis of $kQ_B/\langle R_B\rangle $, where $P_B$ is  a subset of the oriented paths of $Q_B$.

Let  $u$ be a down arrow from $e$ to $g$, and let $v$ be an up arrow from $h$ to $f$  in $Q_\Lambda$. We define the sets $v \curlyvee u$ and $u\curlyvee v$ of oriented paths of $Q_\Lambda$  as follows:
$$v\curlyvee u = v(hP_Bg)u \mbox{\ \ and \ \ }   u\curlyvee v=   u(eP_Af)v.$$

Let $R$ be the disjoint union of $R_A$, $R_B$, and $v\curlyvee u$ and $u\curlyvee v$ for all pairs $(u,v)$, where $u$ is a down arrow and $v$ is an up arrow.

\begin{theo}
Let $\Lambda=\left(
               \begin{array}{cc}
                 A & N \\
                 M & B \\
               \end{array}
             \right)$
be a finite dimensional null-square projective algebra, where $A$ and $B$ are basic and sober algebras with presentations $(Q_A, R_A)$ and $(Q_B, R_B)$ respectively,  and where the projective bimodules $M$ and $N$ are given as in (\ref{M}) and (\ref{N}).
The algebra $\Lambda$ is presented by $(Q_\Lambda, R)$.
\end{theo}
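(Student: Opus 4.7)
The plan is to combine the two preceding lemmas. Lemma \ref{iso} provides an algebra isomorphism $\varphi: T_{A\times B}(M\oplus N)\to kQ_\Lambda/\langle R_A,R_B\rangle$, while the lemma immediately following it identifies the kernel of the product map $\psi: T_{A\times B}(M\oplus N)\to\Lambda$ as the two-sided ideal $\langle N\otimes_B M+M\otimes_A N\rangle$. First I would observe that $\psi$ is surjective, since $\Lambda = (A\times B)\oplus M\oplus N$ as a vector space and $\psi$ is the inclusion on the degree $0$ and degree $1$ parts of the tensor algebra. Consequently
$$\Lambda \;\cong\; T_{A\times B}(M\oplus N)\,/\,\langle N\otimes_B M + M\otimes_A N\rangle.$$

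Transporting through $\varphi$, the task reduces to showing that the image $\varphi(\langle N\otimes_B M+M\otimes_A N\rangle)$ coincides, as a two-sided ideal of $kQ_\Lambda/\langle R_A,R_B\rangle$, with the image of $\langle R\rangle$; equivalently, with the two-sided ideal generated by $\{v\curlyvee u\}\cup\{u\curlyvee v\}$ in $kQ_\Lambda/\langle R_A,R_B\rangle$. The desired isomorphism $\Lambda\cong kQ_\Lambda/\langle R\rangle$ will then follow from the third isomorphism theorem.

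The central step is to identify $\varphi(N\otimes_B M)$ and $\varphi(M\otimes_A N)$ as vector subspaces. By the decompositions (\ref{M}) and (\ref{N}), each indecomposable summand $Bg\otimes eA$ of $M$ (respectively $Af\otimes hB$ of $N$) is cyclically generated, and under $\varphi$ its distinguished generator is sent to the associated down arrow $u$ from $e$ to $g$ in $Q_\Lambda$ (respectively up arrow $v$ from $h$ to $f$). An arbitrary element of $N\otimes_B M$ is a sum of triple products $n\cdot b\cdot m$ in the tensor algebra, where $n,m$ are such bimodule generators and $b$ lies in a Peirce component $hBg$. Since $P_B$ is a basis of $kQ_B/\langle R_B\rangle$, the subspace $hBg$ is spanned modulo $R_B$ by $hP_Bg$, so $\varphi(n\cdot b\cdot m)$ is a linear combination of elements of $v\curlyvee u$. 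Hence $\varphi(N\otimes_B M)$ is exactly the span of the sets $v\curlyvee u$ ranging over all compatible pairs $(u,v)$, and symmetrically $\varphi(M\otimes_A N)$ is the span of the sets $u\curlyvee v$. Generating the two-sided ideals from these spanning sets on both sides produces the announced equality.

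The main obstacle is bookkeeping rather than conceptual: one must track how the direct sum decomposition of $M$ and $N$ into indecomposable bimodule summands is reflected in $\varphi$ via the arrows of $Q_\Lambda$, and verify that $\varphi$, being an algebra isomorphism, commutes with passage to two-sided ideals and with the Peirce decomposition on both sides. Both points follow from the universal property of the tensor algebra and linearity, but they are where the careful set-up of the $(N,M)$-quiver and the choice of bases $P_A, P_B$ pays off.
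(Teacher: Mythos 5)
Your proposal is correct and takes essentially the same route as the paper: pass to $T_{A\times B}(M\oplus N)/\Ker\psi\cong\Lambda$, transport through $\varphi$, and compute the image of the generators of $\Ker\psi$ to get the relation sets $v\curlyvee u$ and $u\curlyvee v$. One small imprecision: $\varphi(N\otimes_B M)$ is not exactly the span of the sets $v\curlyvee u$ (it also contains products with paths of $Q_A$ on either side), but since the distinguished elements $f\otimes b\otimes e$ do generate $N\otimes_B M$ as an $A$-bimodule, the two-sided ideals they generate agree, which is all the argument needs.
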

\begin{proof}
The key point of the proof is the following. Consider the image of $\Ker \psi$ by $\varphi$ (see Lemma \ref{iso}) in $kQ_\Lambda/\langle R_A, R_B\rangle$. Let $Bg\otimes eA$ be a direct summand of $M$, and $Af \otimes hB$ be a direct summand of $N$.  They provide the direct summand  $Bg\otimes eAf \otimes hB$ of $M\otimes_A N\subset \left(T_{A\times B}(M\oplus N)\right)_2 \subset \Ker \psi$. In order to consider its image by $\varphi$, let $u$ and $v$ be the arrows in $Q_\Lambda$ associated respectively to $Bg\otimes eA$ and $Af \otimes hB$.  The image of $Bg\otimes eAf \otimes hB$  in $kQ_\Lambda/\langle R_A, R_B\rangle$ is generated by $u\curlyvee v$. \qed
\end{proof}

\begin{exam}
Let $Q_A$ be a \emph{crown} quiver with three arrows $a_0$, $a_1$ and $a_2$, that is these arrows start respectively at $e_0$, $e_1$ and $e_2$, and end respectively at $e_1$, $e_2$ and $e_0$. Let $R_A=\{a_2a_0\}$. It is easy to establish that $A=kQ_A/\langle R_A \rangle$ is smooth. Let $(Q_B, R_B)$ be a presentation of a basic and sober algebra $B$, and let $g$ and $h$ be vertices of $B$.

Let $M=Bh\otimes e_1A$ and $N=Ae_2\otimes gB$, and $u$ from $e_1$ to $h$ and $v$ from $g$ to $e_2$ the corresponding arrows. Note that the $(N,M)$-quiver has no oriented cycles. Let $\Lambda$ be the corresponding null-square projective algebra, next we describe its Gabriel quiver and a set of relations:

\begin{itemize}
  \item $Q_\Lambda = Q_A \cup Q_B \cup \{u,v\}$.
  \item $R=\{a_2a_0, R_B\}\cup\{v\gamma u\}_{\gamma \in gP_Bh}$, where $P_B$ is basis of oriented paths of $B$.
\end{itemize}

Moreover it follows from the previous results that if $B$ is smooth, then $\Lambda$ is smooth.
\end{exam}

\vskip5mm

\noindent\textbf{Acknowledgements:} the first and third authors thank Marcelo Lanzilotta and  Universidad de la Rep\'ublica (Uruguay) for excellent conditions during part of the preparation of this work. We also thank Cristian Chaparro and Mariano Su\'arez  \'Alvarez for help with tikz figures.

\normalsize

\footnotesize
\noindent C.C.:
\\IMAG - Institut Montpelli\'{e}rain Alexander Grothendieck\\
Univ Montpellier, CNRS, Montpellier, France.\\
{\tt Claude.Cibils@umontpellier.fr}

\medskip

\noindent M.J.R.:
\\Instituto de Matem\'{a}tica (INMABB, UNS/CONICET)\\
Universidad Nacional del Sur\\
Av. Alem 1253\\
8000, Bah\'\i a Blanca, Argentina.\\{\tt mredondo@criba.edu.ar}

\medskip
\noindent A.S.:
\\IMAS-CONICET y Departamento de Matem\'atica,
 Facultad de Ciencias Ezactas y Naturales,\\
 Universidad de Buenos Aires,
\\Ciudad Universitaria, Pabell\'on 1\\
1428, Buenos Aires, Argentina. \\{\tt asolotar@dm.uba.ar}

\end{document}